%% file: main.tex
\documentclass[11pt,reqno]{amsart}
\usepackage[utf8]{inputenc} 
\usepackage[T1]{fontenc}    
\usepackage{lmodern}  

\usepackage{graphics,color}
\usepackage{amssymb, amsmath, amsthm, amscd}
\usepackage{latexsym, verbatim, graphicx, amsfonts}
\usepackage[mathscr]{euscript}
\usepackage{amsmath, amsthm, amssymb}
\usepackage{dsfont}
\usepackage{enumitem}
\usepackage{mathrsfs}
\usepackage{mathtools}
\usepackage{float}

\usepackage{xcolor}                 
\usepackage[colorlinks=true, linkcolor=blue, citecolor=black, urlcolor=black]{hyperref}           

\usepackage{aliascnt}
\usepackage[nameinlink,noabbrev]{cleveref}

\theoremstyle{plain}
\newtheorem{theorem}{Theorem}[section]

\newaliascnt{lemma}{theorem}
\newtheorem{lemma}[lemma]{Lemma}
\aliascntresetthe{lemma}
\crefname{lemma}{lemma}{lemmas}
\Crefname{lemma}{Lemma}{Lemmas}

\newaliascnt{proposition}{theorem}
\newtheorem{proposition}[proposition]{Proposition}
\aliascntresetthe{proposition}
\crefname{proposition}{proposition}{propositions}
\Crefname{proposition}{Proposition}{Propositions}

\newaliascnt{corollary}{theorem}
\newtheorem{corollary}[corollary]{Corollary}
\aliascntresetthe{corollary}
\crefname{corollary}{corollary}{corollaries}
\Crefname{corollary}{Corollary}{Corollaries}

\theoremstyle{definition}
\newaliascnt{definition}{theorem}
\newtheorem{definition}[definition]{Definition}
\aliascntresetthe{definition}
\crefname{definition}{definition}{definitions}
\Crefname{definition}{Definition}{Definitions}

\newaliascnt{remark}{theorem}
\newtheorem{remark}[remark]{Remark}
\aliascntresetthe{remark}
\crefname{remark}{remark}{remarks}
\Crefname{remark}{Remark}{Remarks}

\newaliascnt{question}{theorem}

\aliascntresetthe{question}
\crefname{question}{question}{questions}
\Crefname{question}{Question}{Questions}

\newaliascnt{example}{theorem}
\newtheorem{example}[example]{Example}
\aliascntresetthe{example}
\crefname{example}{example}{examples}
\Crefname{example}{Example}{Examples}

\newcommand{\N}{\mathbb{N}}

\newcommand{\norm}[1]{ \| #1 \| }

\DeclareMathOperator{\supp}{supp}

\DeclareMathOperator{\diag}{diag}

\newcommand{\vertiii}[1]{{\left\vert\kern-0.25ex\left\vert\kern-0.25ex\left\vert #1 
    \right\vert\kern-0.25ex\right\vert\kern-0.25ex\right\vert}}

\font\sstext=ecss1000
\font\sssub=ecss1000 at 7pt
\font\sssubsub=ecss1000 at 5pt

\newfam\ssfam
\textfont\ssfam=\sstext
\scriptfont\ssfam=\sssub
\scriptscriptfont\ssfam=\sssubsub

\makeatletter
\def\@tocline#1#2#3#4#5#6#7{\relax
  \ifnum #1>\c@tocdepth
  \else
    \par \addpenalty\@secpenalty\addvspace{#2}%
    \begingroup
      \hyphenpenalty\@M
      \@ifempty{#4}{%
        \@tempdima\csname r@tocindent\number#1\endcsname\relax
      }{%
        \@tempdima#4\relax
      }%
      \parindent\z@
      \leftskip#3\relax
      \advance\leftskip\@tempdima\relax
      \rightskip\@pnumwidth plus4em
      \parfillskip-\@pnumwidth
      #5\leavevmode\hskip-\@tempdima
        \ifcase #1
        \or\or \hskip 1em \or \hskip 2em \else \hskip 3em \fi
        #6\nobreak\relax
      \dotfill\hbox to\@pnumwidth{\@tocpagenum{#7}}\par
      \nobreak
    \endgroup
  \fi
}
\makeatother

\subjclass[2020]{Primary:
46B03 
; Secondary:
46B25, 
46E15, 
46B20. 
}

\keywords{primary Banach space, $C(K)$-spaces, $\ell_p$-sums, primary factorisation property, uniform primary factorisation property, maximal ideals of operator algebras}

\begin{document}

\title[Primariness of the spaces $\ell_p(C(K))$ for $1 \leq p \leq \infty$]{Primariness of the spaces $\ell_p(C(K))$ for $1 \leq p \leq \infty$}
\author[A. Acuaviva]{Antonio Acuaviva}
\address{School of Mathematical Sciences,
Fylde College,
Lancaster University,
LA1 4YF,
United Kingdom} \email{ahacua@gmail.com}

\date{\today}

\begin{abstract}
    We prove that the spaces $\ell_p(C(\alpha))$ and $\ell_p(C[0,1])$ have the uniform primary factorisation property whenever $\alpha$ is an ordinal and $1<p\leq\infty$. For the case $p=1$, we establish a general criterion ensuring that $\ell_1(X)$ inherits the uniform primary factorisation property from $X$. As a consequence, $\ell_p(C(K))$ is primary for every compact metrizable space $K$ and every $1 \leq p \leq \infty$.
\end{abstract}

\maketitle

\tableofcontents

\bigskip
\input{1-Introduction}
\bigskip
\input{2-1-Organization-and-notation}
\bigskip
\input{3_Preliminary_results}

\input{3-0-1-The_ell_1_case}
\bigskip
\input{3-1_l_infty}
\bigskip
\input{4_proof_main_theorems}
\bigskip

\noindent\textbf{Acknowledgements.} This paper forms part of the author's PhD research at Lancaster University, conducted under the supervision of Professor N. J. Laustsen. He acknowledges with thanks the funding from the EPSRC (grant number EP/W524438/1) that has supported his studies. \medskip

For the purpose of open access, the author has applied a Creative Commons Attribution (CC BY) licence to any Author Accepted Manuscript version arising. \medskip

\noindent\textbf{Data availability.} No data was used for the research described in the article.


\input{bibliography}
\end{document}

%% file: 1-Introduction.tex
\section{Introduction and organisation}

A Banach space $X$ is said to be \emph{primary} if it cannot be decomposed, in a complemented sense, into two essentially different pieces. More precisely, $X$ is primary if, whenever $P\colon X\to X$ is a projection, either $PX$ or $(I_X-P)X$ is isomorphic to $X$.

The primariness problem for the classical Banach spaces played an important role in the development of the isomorphic theory of Banach spaces during the 1960s and 1970s. The first major successes concerned the sequence spaces: Pe{\l}czy\'nski proved the stronger property of primeness for $c_0$ and $\ell_p$, $1\leq p<\infty$ \cite{Pelczynski1960}, and Lindenstrauss obtained the corresponding result for $\ell_\infty$ \cite{Lin1967}. This was followed by the primariness of $C[0,1]$, proved by Lindenstrauss and Pe{\l}czy\'nski \cite{LindenstraussPelczynski1971}, and of $L_p$, $1\leq p<\infty$, obtained from Enflo's ideas as presented by Maurey \cite{Maurey1975}. The spaces $C(\alpha)$ of continuous functions on ordinal intervals were then treated by Billard in the countable case \cite{Billard1978} and by Alspach and Benyamini in full generality \cite{AlspachBenyamini1977}. Taken together, these results provided a positive solution to the primariness problem for classical spaces and introduced powerful methods for analysing their geometry.

The primariness of many other Banach spaces has also been established over the years; we refer the reader to \cite[Section 2]{AcuavivaKania2026} for a more detailed account. A particularly important direction concerns bi-parameter spaces, that is, spaces obtained by combining two classical Banach spaces. In the sequence-space setting, the corresponding results for the spaces $\ell_p(\ell_q)$, $1\leq p,q\leq\infty$, were obtained through the work of Casazza--Kottman--Lin \cite{CKL1976} and Capon \cite{Capon1982PLMS}. These results naturally extend to multi-parameter sequence spaces; see the paper of the present author and Kania \cite{AcuavivaKania2026}.

Bi-parameter spaces involving $L_p$-spaces form another major part of this picture. In that direction, Capon developed a substantial analysis of mixed $L_p$-spaces, establishing the primariness of the spaces $L_r(L_s)$, $\ell_p(L_q)$ and $L_p(\ell_q)$ for $1<r,s<\infty$ and $1\leq p,q<\infty$ \cite{Capon1980, Capon1980b, Capon1982PLMS, Capon1983}. The corresponding $\ell_\infty$-sums required separate methods: Wark proved the primariness of $\ell_\infty(L_p)$ for $1<p<\infty$ \cite{Wark2007}, and M\"uller subsequently treated $\ell_\infty(L_1)$ \cite{Muller2012}. More recently, Lechner, Motakis, M\"uller and Schlumprecht proved the primariness of $L_1(L_p)$, $1 < p < \infty$ \cite{LMMSS2022}, and pointed out several remaining unresolved primariness problems for classical bi-parameter spaces.

Motivated by this broader programme, this paper studies bi-parameter spaces involving $C(K)$-spaces, a class for which the corresponding primariness problems have so far remained largely open. More precisely, we study $\ell_p$-sums of $C(K)$-spaces. Previously known primariness results for spaces of the form $\ell_p(X)$ typically relied on properties of a basis of $X$, perhaps with $\ell_p(L_1)$ being a notable exception. Such methods do not appear to be directly applicable in the $C(K)$-space setting. The approach taken here is instead based on the reproducibility properties of $C(K)$ itself, together with the structure of the underlying compact space $K$. This naturally brings the primary factorisation property into play. Recall that the (uniform) primary factorisation property, denoted by (U)PFP, asserts a dichotomy for operators on the space in terms of factorisation of the identity; see \Cref{def: UPFP} for details. The same point of view was used by the present author and Kania in the context of primariness of uncountable $\ell_p$-sums; see \cite[Section 4]{AcuavivaKania2026}.

Our main results are as follows.

\begin{theorem}\label{th: main-theorem}
    For every ordinal $\alpha$ and every $1 < p \leq \infty$, the space $\ell_p(C(\alpha))$ has the UPFP. In particular, these spaces are primary.
\end{theorem}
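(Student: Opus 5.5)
Throughout, I will use the notions of \Cref{def: UPFP}: $X$ has the UPFP with constant $C$ if for every $T\in B(X)$ there is $\Phi\in\{T,I_X-T\}$ and $A,B\in B(X)$ with $I_X=A\Phi B$ and $\|A\|\|B\|\le C$. My plan has two layers. First, I reduce to the ordinals $\alpha=\omega^{\omega^\beta}$ (and, for uncountable $\alpha$, to the initial ordinals and their powers). This is legitimate because every $C(\alpha)$ is isomorphic to $C(\gamma)$ with $\gamma$ of this form, by Bessaga--Pe{\l}czy\'nski and Semadeni classification in the countable case and by Kislyakov/Alspach--Benyamini for general $\alpha$. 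Moreover, primariness and the UPFP are isomorphic invariants, and the finite case $C(n)$ gives $\ell_p$, which is known.

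For such $\alpha$ I use the known uniform reproducibility of $C(\alpha)$ from Alspach--Benyamini. For every $T\in B(C(\alpha))$, either $T$ or $I-T$ fixes a copy of $C(\alpha)$ that is uniformly complemented. More concretely, there is a clopen "tree" of points of $[0,\alpha]$ on which the relevant operator acts, up to small perturbation, as an isomorphism with $1$-complemented range and controlled constants. The key intermediate statement I aim for is a \emph{quantitative, uniform} version of this: a constant $C_\alpha$ and, for every $T$ with $\|T\|\le M$, factorisations $I=A\Phi B$ with $\|A\|\|B\|\le C_\alpha(M)$. I would isolate this as a lemma in the preliminary section.

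Second, I lift this to $\ell_p(C(\alpha))=\ell_p(\N\times[0,\alpha])$-valued functions. Given $T$ on $\ell_p(C(\alpha))$ with matrix $(T_{ij})$, $T_{ij}\in B(C(\alpha))$, I run a gliding-hump/diagonalisation argument. By a Ramsey-type or evaluation-functional argument using the points of $[0,\alpha]$, I pass to subsequences $n_1<n_2<\dots$ and blocks so that, for each $k$, the diagonal entry $T_{n_kn_k}$ (or a finite-block compression) dominates. The off-diagonal parts will be made small enough (summable, $\varepsilon_k$) so that a perturbation argument applies. For $1<p<\infty$ the smallness of off-diagonal entries should come from reflexivity-type properties of $\ell_p$ (weakly null columns). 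For $p=\infty$ I would instead use Lindenstrauss-style arguments: the $\ell_\infty$-sum lets me choose a pigeonhole on infinitely many indices, and operators on $\ell_\infty(X)$ can be compressed to diagonal ones on an infinite subset (cf.\ the $\ell_\infty(L_p)$ methods of Wark). Then I apply the first-layer dichotomy to each $T_{n_kn_k}$. By pigeonhole, infinitely many $k$ choose the same alternative, say $T$, and uniformity gives $A_k,B_k$ with $I=A_kT_{n_kn_k}B_k$ and bounded norms. Hence $A=\bigoplus A_k$ and $B=\bigoplus B_k$, composed with the block projections, factor the identity of $\ell_p(C(\alpha))$ through $T$ up to a small perturbation, which is corrected by a Neumann series. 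Primariness then follows from the UPFP via Pe{\l}czy\'nski decomposition, since $\ell_p(C(\alpha))\cong\ell_p(\ell_p(C(\alpha)))$.

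I expect the main obstacle to be the diagonalisation step. The entries $T_{ij}$ need not be compact, and $C(\alpha)$ has no unconditional basis to exploit. I would first try to obtain the perturbation not on the full $C(\alpha)$ but on the Alspach--Benyamini reproducing subspaces, which are spanned by disjointly supported peaks. The plan is then to choose these subspaces inductively so that $\|P_{n_j}TJ_{n_k}|_{Y_k}\|<\varepsilon_{j,k}$, using that the peak functions can be chosen weak$^*$-null in a controlled way and the $\ell_p$ structure ($p>1$). This is also exactly where $p=1$ would fail, consistent with the separate treatment of $p=1$.
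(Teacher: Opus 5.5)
\textbf{There is a genuine gap: the reduction of $T$ to an almost diagonal operator, which carries the theorem, is not supplied, and the mechanisms you indicate for it do not work.} Your endgame does match the paper's: apply the uniform PFP of $C(\alpha)$ to the diagonal entries, pigeonhole on the alternative, assemble diagonal factors, and absorb the error by a Neumann series.

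\emph{Below the diagonal.} You need $\norm{T_{n,m}}\to 0$ as $n\to\infty$ for each fixed $m$, uniformly on the unit ball. Reflexivity of $\ell_p$ or weak nullness only gives $T_{n,m}f\to 0$ for each fixed $f$. The uniform statement comes from the fact that every operator $C(\alpha)\to\ell_p$ is compact (\Cref{prop: alpha-to-ellp-is-compact}: property (V), the Schur property of $\ell_1(\alpha)$, Pitt). This is a property of $C(\alpha)$ rather than of $\ell_p$; it fails for $C[0,1]$ when $p\geq 2$.

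\emph{Above the diagonal.} Row entries need not become small along any subsequence, and your fallback of shrinking the domain of column $n_k$ to a subspace $Y_k$ fails in general. For $\alpha=\omega$, take an isomorphism $\Theta\colon c_0(C(\omega))\to C(\omega)$ (available since $C(\omega)\simeq c_0\simeq c_0(c_0)$), and put $T=J_1\Theta\iota$, where $\iota\colon\ell_p(C(\omega))\to c_0(C(\omega))$ is the formal inclusion. Then $T$ is bounded, but every $T_{1,m}$ is an isomorphic embedding. Hence $\norm{T_{1,m}|_Y}\geq\norm{\Theta^{-1}}^{-1}$ for all $m$ and every nonzero subspace $Y$. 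Once such a row is selected, your requirement $\norm{P_{n_j}TJ_{n_k}|_{Y_k}}<\varepsilon_{j,k}$ cannot be met. Your sketch gives no rule for avoiding such rows, nor any argument that suitable $Y_k$ exist otherwise. Weak$^*$-null peak functions do not address this, because the obstruction is in norm, uniform over the unit ball.

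\textbf{The missing idea is to shrink the codomain instead.} For $p>1$ the $\ell_p$ structure gives a pointwise statement. Partition $\N$ into $M_1,\dots,M_N$ with $N^{(p-1)/p}>\norm{T}/\varepsilon$. Let $A_s$ be the closed set of points $a\in K=[0,\omega^\eta]$ with $|(Tf)(a)|\leq\varepsilon$ for all $f$ in the unit ball of $\ell_p(M_s,C(K))$. A point outside every $A_s$ would give, by adding near-extremal disjointly supported $f_s$, the contradiction $N\varepsilon<\norm{T}N^{1/p}$. So the $A_s$ cover $K$. Since $K$ has characteristic system $(\eta,1)$, \Cref{lmm: indivisibility-compact-subsets} makes some $A_s$ homeomorphic to $K$. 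Restricting to $A_s$ and using a norm-one extension operator gives a compression on which the row is $\varepsilon$-small (\Cref{lmm: lower-triangular}). Iterating along rows gives the lower-triangular reduction (\Cref{prop: lower-triangular}). Together with the compactness-based upper-triangular reduction, this yields the near-diagonal form your endgame needs.

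\textbf{The case $p=\infty$.} Compressing an operator on $\ell_\infty(X)$ to a diagonal one on an infinite subset is not a general fact; the matrix need not even determine the operator. The paper runs the same pointwise argument with $N\varepsilon>\norm{T}$ to obtain a genuinely lower-triangular operator. For the columns, where compactness is unavailable, it selects good columns and again restricts the codomain (\Cref{lmm: linfty-forward,lmm: weak-upper}).

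\textbf{Two smaller points.}
First, your first-layer dichotomy is false for some $C(\alpha)$; for example $C(\omega_1\cdot 2)\simeq C(\omega_1)\oplus C(\omega_1)$ is not primary. The reduction therefore has to be made at the level of the sum, $\ell_p(C(\alpha))\simeq\ell_p(C(\omega^\eta))$, as the paper does.
Second, a constant $C_\alpha(M)$ depending on a bound $M$ for $\norm{T}$ would give only the PFP. \Cref{th: alspach-benyamini} provides a constant independent of $T$.
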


Using similar methods, we also obtain the corresponding result for the uncountable compact metric case.

\begin{theorem}\label{th: main-theorem2}
    For every $1 < p \leq \infty$, the space $\ell_p(C[0,1])$ has the UPFP.
    In particular, these spaces are primary.
\end{theorem}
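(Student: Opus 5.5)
The argument for $\ell_p(C[0,1])$ will follow the same scheme as for \Cref{th: main-theorem}. The plan is to reduce the UPFP to a ``diagonalisation'' statement. For an operator $T$ on $\ell_p(C[0,1])$ we write $T$ as a matrix $(T_{ij})$ of operators $C[0,1]\to C[0,1]$, with $T_{ij}=Q_iTJ_j$, where $J_j$ and $Q_i$ are the canonical injections and projections. We then seek a block-diagonal perturbation.

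\textbf{Step 1.} Produce infinite sets $M\subseteq\N$ and, for each $n\in M$, complemented copies $Y_n\subseteq C[0,1]$ with $Y_n\cong C[0,1]$ uniformly, such that the off-diagonal parts $Q_{Y_m}T|_{Y_n}$, $m\neq n$, are summably small. This is done by a gliding-hump/Ramsey argument in the first coordinate; it uses only that $\ell_p$ for $1<p\le\infty$ has the relevant weak-null or weak$^*$ properties, and for $p=\infty$ a Rosenthal-type disjointification lemma. Smallness in the second coordinate comes from reproducibility of $C[0,1]$.

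\textbf{Step 2.} Apply the UPFP of $C[0,1]$ uniformly to each diagonal entry. We use the known fact, from Lindenstrauss--Pe{\l}czy\'nski and its uniform version via Rosenthal's $C(K)$ factorisation theorem, that for $S\colon C[0,1]\to C[0,1]$ either $S$ or $I-S$ fixes a copy of $C[0,1]$, with constants independent of $S$. Here the relevant invariant is the Szlenk index of $S^*$ restricted to a copy of $C[0,1]$, i.e.\ the failure of $S$ to be small on a copy. For each $n$, either $T_{nn}$ or $(I-T)_{nn}$ factors the identity on $C[0,1]$ with uniform constants. Passing to an infinite subset of $M$, one alternative holds for infinitely many $n$; say it is $T$ (the other case is symmetric).

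\textbf{Step 3.} Assemble the factorisations. Let $A_n,B_n$ be the uniformly bounded operators with $B_nT_{nn}A_n=I_{C[0,1]}$. Put $A=\diag(A_n)$ and $B=\diag(B_n)$, which are bounded on the $\ell_p$-sum. Then $BTA=I+E$ with $\|E\|<1/2$ by Step 1, so the identity of $\ell_p(C[0,1])\cong\ell_p(\ell_p$-sum over $M)$ factors through $T$ with controlled constants. This yields the UPFP, and primariness follows from Pe{\l}czy\'nski decomposition, since $\ell_p(X)\cong\ell_p(\ell_p(X))$.

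The main obstacle is Step 1, in particular the selection of the copies $Y_n$ inside the second coordinate. Unlike $C(\alpha)$, the space $C[0,1]$ has no ordinal-indexed clopen structure. I would first try to use copies spanned by functions supported on disjoint intervals, via the Milutin isomorphism $C[0,1]\cong C(\Delta)$ and clopen subsets of the Cantor set, and push the perturbations into small pieces using equi-integrability of the measures $T^*\delta_t$.
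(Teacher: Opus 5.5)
Your outer architecture is the same as the paper's: compress $T$ to a small perturbation of a diagonal operator whose entries act on pieces isomorphic to $C[0,1]$, apply the $C$-PFP of $C[0,1]$ entrywise, pass to a subsequence on which one alternative holds, assemble diagonal factorisations, and invert $I+E$ by a Neumann series. However, Step~1 is the whole content of the theorem, you do not supply it, and the mechanisms you point to do not work.

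\textbf{Entries below the diagonal.} The $C(\alpha)$ proof uses that every operator into $\ell_p$ is compact. For $C[0,1]$ this holds only for $p<2$ (\Cref{prop: compact-fromC01}). For $p\geq 2$ a column can have all its entries bounded below in norm. For example, $f\mapsto(\langle f,e_n\rangle\mathbf{1})_{n\in\N}$, with $(e_n)$ the trigonometric system, maps $C[0,1]$ into $\ell_p(C[0,1])$ with norm at most one, while its $n$th entry has norm $\norm{e_n}_{L_1}$, which is bounded away from zero. For such a column no infinite set of rows makes the tail small, so the column-by-column selection used for $C(\alpha)$ breaks down, and no ``weak-null property of $\ell_p$'' repairs it. The paper instead shrinks the domain inside $C[0,1]$. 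It passes to $\ell_p(c_0(C[0,1]))$, using $c_0(C[0,1])\simeq C[0,1]$, and proves by Rademacher averaging (\Cref{lmm:c0-lp-coordinate-smallness}) that some inner coordinate has a small column on an infinite set of rows. Otherwise, testing $T$ on $\sum_{s\le r}\theta_sJ_sx_s$ against $r$ disjoint infinite row sets gives $\norm{T}^p>r\varepsilon^p$; this is where $p<\infty$ enters. Your ``disjoint intervals'' idea is close to this $c_0$-structure. The tool you propose, equi-integrability of $(T^*\delta_t)_t$, is not available in general: for $T=I$ these measures are Dirac masses.

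\textbf{Entries above the diagonal.} Your sketch never uses $p>1$, yet the corresponding reduction is false for $p=1$. The row $(f_n)\mapsto\sum_n f_n$ on $\ell_1(C[0,1])$ has norm one on every column, and this persists after restricting its range to any closed subset. The missing idea is the backward reduction (\Cref{lmm: lower-triangular}). For a row $S\colon\ell_p(C[0,1])\to C[0,1]$, split the columns into $N$ infinite sets $M_1,\dots,M_N$ with $N^{1-1/p}>\norm{S}/\varepsilon$. The closed sets $A_n=\{a : |Sf(a)|\le\varepsilon \text{ for all } f\in B_{\ell_p(M_n,C[0,1])}\}$ cover $[0,1]$, since otherwise adding one witness from each group gives $N\varepsilon<\norm{S}N^{1/p}$. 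By Baire category some $A_n$ contains an interval $A$, and then $\norm{R_ASJ_{M_n}}\le\varepsilon$. So the smallness comes from restricting the range to subintervals and extending back by norm-one extension operators, not from any integrability of the kernel.

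\textbf{The case $p=\infty$.} Here your inference ``$\norm{E}<1/2$ by Step~1'' is invalid even if Step~1 held. An operator on $\ell_\infty(C[0,1])$ is not determined by its matrix: parts built from a Banach limit vanish on $c_0(C[0,1])$. Hence summably small off-diagonal entries give no bound on $\norm{E}$. The paper first applies the row reduction to whole rows on tail subspaces, which yields a lower triangular operator that genuinely acts by its matrix. It then controls the columns by a second range-restriction and Baire argument (\Cref{lmm: linfty-forward,lmm: weak-upper}), since Rademacher averaging is unavailable in $\ell_\infty$.

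\textbf{Two smaller points.} First, the PFP in Step~2 must be applied to the compressed entries $Q_{Y_n}T|_{Y_n}$ rather than to $T_{nn}$; otherwise Step~1 does not control $E$. Second, the relevant invariant for $C[0,1]$ is the oscillation of $t\mapsto T^*\delta_t$ from Weis's proof of Rosenthal's theorem (\Cref{prop: C01-UPFP}), not a Szlenk index.
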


Finally, we treat the case $p = 1$; in this case, we obtain a general criterion ensuring that the UPFP is transferred to the corresponding $\ell_1$-sum.

\begin{theorem}\label{th: main-theorem3}
    Let $X$ be a Banach space such that either $X\simeq c_0(X)$ or $X\simeq \ell_p(X)$ for some $1 < p \leq \infty$. Suppose moreover that $X$ contains no complemented copy of $\ell_1$. Then the UPFP passes from $X$ to $\ell_1(X)$; that is, if $X$ has the UPFP, then $\ell_1(X)$ has the UPFP. In particular, $\ell_1(X)$ is primary.
\end{theorem}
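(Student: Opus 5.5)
Recall that $X$ has the UPFP if there is a constant $C$ such that for every operator $T$ on $X$, the identity $I_X$ factors through $T$ or through $I_X - T$ with factors of norm at most $C$. Write $Y=\ell_1(X)$. Given $T\colon Y\to Y$, denote by $T_{mn}=Q_m T J_n\colon X\to X$ its matrix entries, where $J_n$ is the $n$-th coordinate embedding and $Q_m$ the $n$-th coordinate projection.

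The plan is to reduce $T$ to an operator that is essentially diagonal with respect to a block decomposition of $Y$. We then apply the UPFP of $X$ to each diagonal entry and glue the resulting factorisations together.

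\textbf{Step 1: use the absence of $\ell_1$ to kill the off-diagonal part.} The hypothesis that $X$ contains no complemented copy of $\ell_1$, together with $X\simeq c_0(X)$ or $X\simeq\ell_p(X)$ for $p>1$, should give a smallness statement: every operator $S\colon X\to\ell_1(X)$ is "small on large pieces".

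Concretely, write $X\simeq Z(X)$ with $Z=c_0$ or $\ell_p$. I expect to prove the following. For every $\varepsilon>0$, every operator $S\colon X\to \ell_1(X)$ and every finite set $F\subset\N$, there is a copy $X_0\subseteq X$, corresponding to a subsequence of the $Z$-decomposition and complemented with uniform constants, on which $\sum_{m\in F}\|Q_m S|_{X_0}\|<\varepsilon$.

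The argument: if $\|Q_m S|_{X^{(k)}}\|\geq \delta$ for infinitely many disjoint pieces $X^{(k)}$, then picking norm-one vectors $x_k\in X^{(k)}$ gives a sequence equivalent to the unit vector basis of $Z$ (or of $c_0$), while $(Q_mSx_k)$ lies in $X$. The $\ell_p$ upper estimate forces the images to be weakly null-like. Combined with the absence of complemented $\ell_1$, a gliding-hump/Rosenthal-type argument then gives a contradiction. The $\ell_1$ target is what makes the columns tend to zero.

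\textbf{Step 2: diagonalise.} Iterating Step 1 by a standard diagonal argument, I would find copies $X_n\subseteq X$, each isomorphic to $X$ and uniformly complemented, such that
\[
\sum_{m\neq n}\|Q_mTJ_n|_{X_n}\|<\varepsilon 2^{-n}.
\]
The complementation is what makes $Y_0=\ell_1(X_n)$ complemented in $Y$ and isomorphic to $Y$. Thus $PT|_{Y_0}$ is a small perturbation of the diagonal operator $\mathrm{diag}(T_{nn}|_{X_n})$, where $P$ is the projection onto $Y_0$. By the usual perturbation argument, it suffices to factor $I_Y$ through a diagonal operator.

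\textbf{Step 3: glue the factorisations.} Apply the UPFP of $X\simeq X_n$ to each $P_nT_{nn}|_{X_n}$. One of $T$ or $I-T$ wins for infinitely many $n$, with factors bounded by the uniform constant $C$. Restricting to those $n$ and summing the factorisations in $\ell_1$ gives a factorisation of $I_{\ell_1(X)}$ through $T$ or $I-T$ with constants independent of $T$. This yields the UPFP; primariness then follows from the standard Pełczyński decomposition, since $Y\simeq\ell_1(Y)$.

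\textbf{Main obstacle.} I expect Step 1 to be the hardest. The pieces must be whole copies of $X$ rather than vectors, so the "no complemented $\ell_1$" hypothesis has to be converted into a uniform estimate over infinite-dimensional blocks.

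My first attempt is a contradiction argument. Assuming the estimate fails, one selects vectors $x_k$ whose images under $Q_mS$ are uniformly large and mutually disjointly supported in a suitable sense. Those images span $\ell_1$ complementably in $\ell_1(X)$, and the $x_k$ then span a complemented $\ell_1$ in $X$, contradicting the hypothesis. Carrying this out requires a lifting of the complementation back to $X$, and the $Z$-decomposition must be used to obtain it.
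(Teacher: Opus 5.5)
You have a genuine gap in Steps 1--2. Your Step 3 (pigeonholing the $C$-PFP alternative, gluing the diagonal factorisations, then a perturbation argument) is exactly what the paper does.

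As stated, Step 1 is false. Take $S=J_m$ with $m\in F$: then $Q_mS=I_X$, so $\|Q_mS|_{X_0}\|=1$ for every nonzero $X_0$. The estimate in Step 2 fails in the same way for the shift $TJ_n=J_{n+1}$, since $\|Q_{n+1}TJ_n|_{X_n}\|=1$ whatever $X_n$ is.

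More fundamentally, the hypothesis that $X$ has no complemented $\ell_1$ controls only \emph{columns}. It shows that every operator $X\to\ell_1$ is compact, hence $\|Q_mTJ_n\|\to0$ as $m\to\infty$ for fixed $n$. A gliding hump along a subsequence of the outer coordinates then removes the entries below the diagonal (\Cref{lmm: preliminary-upper-triangular,prop: prop-upper-triangular}). It gives nothing about \emph{rows}: $T\bigl((x_n)_n\bigr)=J_1\bigl(\sum_n x_n\bigr)$ is a norm-one operator on $\ell_1(X)$ with $Q_1TJ_n=I_X$ for every $n$. Such entries become small only after compressing row outputs and column inputs to \emph{different} pieces of the $Z$-decomposition, and your plan gives no argument that this can be done for a general row. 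Your contradiction sketch is also inconsistent: vectors $x_k$ equivalent to the unit vector basis of $c_0$ or $\ell_p$, $p>1$, cannot span a copy of $\ell_1$.

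The missing ingredient is a backward reduction for the entries above the diagonal, and it uses the structure $X\simeq Z(X)$ rather than the absence of $\ell_1$. The paper works on $\ell_1(Z(X))$ and selects single positions $(n_j,m_j)$ (outer coordinate, inner coordinate), so each row is automatically compressed to one inner coordinate. It then proves (\Cref{lmm:coordinate-compression-l1}) the following: if $S\colon Z(X)\to\ell_1^N(X)$ satisfies $N^{(p-1)/p}\varepsilon>\|S\|$ (or $N\varepsilon>\|S\|$ for $c_0$, $\ell_\infty$), then some row is $\varepsilon$-small on an infinite set of inner coordinates. The reason is that disjointly supported vectors in the unit ball satisfy $\|\sum_{s\le N}\theta_sx_s\|\le N^{1/p}$ in $Z(X)$, whereas the $\ell_1^N$-norm of the images adds up. Rademacher averaging would therefore give $N^{1/p}\|S\|>N\varepsilon$ if every row were large.

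In \Cref{prop:lower-triangular-form-ell1} this lemma is applied at stage $k$ to $N_k$ candidate rows against every later outer column. A pigeonhole argument then fixes one row that is small on infinitely many later columns. This forces passing to a subsequence of the outer coordinates, which your Step 2 never does. Only after this lower triangular reduction does compactness of operators $X\to\ell_1$ yield a diagonal operator, and then your Step 3 applies.
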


Combining these results with Milutin's theorem \cite{Milutin1966} and the Bessaga--Pe{\l}czy\'nski classification of the spaces $C(K)$ for countable compact metric spaces \cite{BessagaPelczynski1960}, we obtain a complete answer to one of the problems listed in \cite{LMMSS2022}, which motivated our interest in this question.

\begin{corollary}\label{cor: compact-metric-primary}
    Let $K$ be a compact metric space and let $1 \leq p \leq \infty$. Then $\ell_p(C(K))$ is primary.
\end{corollary}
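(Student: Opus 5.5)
By Milutin's theorem and the Bessaga--Pe{\l}czy\'nski classification, the corollary reduces to the cases already covered by \Cref{th: main-theorem}, \Cref{th: main-theorem2} and \Cref{th: main-theorem3}; the only real work is checking the hypotheses of \Cref{th: main-theorem3} when $p=1$.

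\textbf{Reduction to model spaces.} I will first sort $K$ by cardinality. If $K$ is finite with $n\geq 1$ points, then $C(K)\simeq \ell_\infty^n$. Hence $\ell_p(C(K))\simeq \ell_p$ for $p<\infty$ and $\simeq \ell_\infty$ for $p=\infty$. These spaces are prime by Pe{\l}czy\'nski and Lindenstrauss, so they are primary. (Equivalently, this is the case $\alpha$ finite of \Cref{th: main-theorem}.) If $K$ is uncountable, Milutin's theorem gives $C(K)\simeq C[0,1]$. If $K$ is countably infinite, the Bessaga--Pe{\l}czy\'nski classification gives $C(K)\simeq C(\omega^{\omega^\beta})$ for some countable ordinal $\beta$. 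In every case, an isomorphism $C(K)\simeq Y$ induces $\ell_p(C(K))\simeq \ell_p(Y)$ coordinatewise, and primariness is an isomorphic invariant.

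\textbf{The case $1<p\leq\infty$.} Here \Cref{th: main-theorem} (with $\alpha=\omega^{\omega^\beta}$) and \Cref{th: main-theorem2} give the UPFP. Primariness then follows as stated in those theorems.

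\textbf{The case $p=1$.} I will apply \Cref{th: main-theorem3} with $X=C(\alpha)$ or $X=C[0,1]$, which requires three inputs.

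First, $X$ must have the UPFP. For $C[0,1]$ this is classical: Lindenstrauss--Pe{\l}czy\'nski's proof actually shows that $C[0,1]$ has the PFP, and the uniform version follows by a standard argument, presumably recorded in the preliminaries. For $C(\omega^{\omega^\beta})$ it should follow from the Alspach--Benyamini/Billard analysis. If this is not stated explicitly earlier, I would deduce it from \Cref{th: main-theorem} applied with $p=\infty$ and a suitable complementation argument. This is the step I expect to be the main obstacle, or at least the point most dependent on results stated earlier.

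Second, $X$ must satisfy $X\simeq c_0(X)$. For $C[0,1]$ this is standard: via Milutin, $C[0,1]\simeq C(\Delta)$, and $c_0(C(\Delta))\simeq C(\text{one-point compactification of }\mathbb N\times\Delta)\simeq C(\Delta)$. For $C(\omega^{\omega^\beta})$ with $\beta$ countable, $c_0(C(\omega^{\omega^\beta}))$ is $C(L)$ for a countable compact $L$ of the same Cantor--Bendixson height, so it is isomorphic to $C(\omega^{\omega^\beta})$ by the classification.

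Third, $X$ must contain no complemented copy of $\ell_1$. Every complemented subspace of a $C(K)$-space, or indeed of any space with the Dunford--Pettis property, that is isomorphic to $\ell_1$ would force $\ell_1$ to be a quotient of $C(K)$. That in turn forces $\ell_1$ to embed in $C(K)^*$ complementably, and more directly, a complemented $\ell_1$ in $C(K)$ would make $C(K)$ fail to be weakly compactly generated. A cleaner route is that $C(K)$ has the Dunford--Pettis property and the Pe{\l}czy\'nski property (V). In a space with property (V), every complemented subspace without $c_0$ is reflexive. A complemented copy of $\ell_1$ contains no $c_0$ but is not reflexive, so none can exist.

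Applying \Cref{th: main-theorem3}, $\ell_1(C(K))$ has the UPFP and is therefore primary, which completes the corollary.
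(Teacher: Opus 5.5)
Your proposal is correct and is essentially the paper's argument: Milutin's theorem and the Bessaga--Pe{\l}czy\'nski classification reduce the problem to $C[0,1]$ and $C(\omega^{\omega^\beta})$, \Cref{th: main-theorem,th: main-theorem2} settle $1<p\leq\infty$, and \Cref{th: main-theorem3} with $X\simeq c_0(X)$ settles $p=1$. The step you flagged as the main obstacle is already supplied by \Cref{th: alspach-benyamini} and \Cref{prop: C01-UPFP} (your fallback via \Cref{th: main-theorem} would be circular, since that theorem uses the UPFP of $C(\alpha)$ as an input); you should also drop the Dunford--Pettis and WCG remarks, which are false as stated, and keep only the property (V) argument, and note that the compactum for $c_0(C(\omega^{\omega^\beta}))$ has Cantor--Bendixson height one larger rather than equal, which still lies in the same Bessaga--Pe{\l}czy\'nski class.
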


The proofs of these theorems rely on reducing operators \break $T\colon \ell_p(C(K))\to \ell_p(C(K))$ to diagonal form. This is achieved in two steps: by reducing to upper triangular operators, and by reducing to lower triangular operators. Together, these reductions yield the desired diagonal form. At that point, one can factorise the identity by invoking the UPFP of the underlying space $C(K)$.  \\

Lastly, although our original motivation was to treat the case $C(K)$ where $K$ is metrizable, the methods developed here are not inherently limited to that setting. They appear to apply more broadly to other sums of the form $\ell_p(X)$, where $1\leq p\leq\infty$, especially when $X$ is a $C(K)$-space. For example, the same reductions to diagonal operators apply when $K$ is the double arrow space $K_{\mathrm{d.a.}}$. Thus, provided one can argue that $C(K_{\mathrm{d.a.}})$ has the UPFP, it would follow that $\ell_p(C(K_{\mathrm{d.a.}}))$ has the UPFP, and hence is primary, for every $1\leq p\leq\infty$. This may already be implicit in Michalak's proof of the primariness of $C(K_{\mathrm{d.a.}})$ \cite{Michalak2005}, but we have not checked the required uniform estimates.

\subsubsection*{Organisation.} We briefly describe the structure of the paper. In \Cref{sec: notation} we introduce the notation used throughout, and recall some background results concerning the topology of the ordinal interval $[0,\alpha]$ and the unit interval $[0,1]$, as well as operators on $C(\alpha)$ and $C[0,1]$. The main technical work is carried out in \Cref{sec: preliminary-lps,sec: ell1,sec: l-infty}. In \Cref{sec: preliminary-lps} we prove the upper and lower triangular reductions for the spaces $\ell_p(C(K))$, $1<p<\infty$, while in \Cref{sec: ell1} we treat the case $p=1$. The corresponding arguments for $\ell_\infty$-sums, where one must take additional care with operator matrices, are given in \Cref{sec: l-infty}. Finally, in \Cref{sec: main-theorems}, we use these technical reductions to prove \Cref{th: main-theorem,th: main-theorem2,th: main-theorem3}.

%% file: 2-1-Organization-and-notation.tex
\section{Notation and preliminary results}\label{sec: notation}

\subsection{Definitions and basic notation}

We follow standard notation and conventions unless stated otherwise. Throughout, the scalar field, denoted by $\mathbb{K}$, may be either $\mathbb{R}$ or $\mathbb{C}$. We reserve the term \emph{operator} for bounded linear maps between Banach spaces; thus, operators are always assumed to be bounded. For a Banach space $X$, we write $I_X \colon X \to X$ for the identity operator on $X$, $B_X$ for its unit ball and $\mathscr{B}(X)$ for the space of operators on $X$. More specialized notation will be introduced as needed.

\begin{definition}
    Let $T \colon X \to Y$ and $S \colon Z \to W$ be operators between Banach spaces $X$, $Y$, $Z$ and $W$. We say that $S$ \emph{factors through} $T$ with constant $C$ if there exist operators $V \colon Z \to X$ and $U \colon Y \to W$ such that
    \begin{equation*}
        UTV = S \quad \text{and} \quad \norm{U}\norm{V} \leq C.
    \end{equation*}
    If this holds for some constant $C \geq 1$, then we simply say that $S$ factors through $T$.
\end{definition}

\begin{definition}\label{def: UPFP}
    Let $X$ be a Banach space.
    \begin{enumerate}[label = (\alph*), ref = (\alph*)]    
        \item We say that $X$ has the \emph{primary factorisation property} (PFP) if for every bounded operator $T \colon X \to X$, the identity $I_X$ factors through either $T$ or $I_X - T$.
        \item Let $C \geq 1$. We say that $X$ has the \emph{$C$-primary factorisation property} ($C$-PFP) if, for every operator $T \colon X \to X$, the identity $I_X$ factors through either $T$ or $I_X - T$ with constant $C$.
        \item We say that $X$ has the \emph{uniform primary factorisation property} (UPFP) if there exists $C \geq 1$ such that $X$ has the $C$-PFP.
    \end{enumerate}
\end{definition}

\subsubsection{Sums of Banach spaces with respect to an unconditional basis.}

Let $E$ be a Banach space with a $1$-unconditional basis $(e_n)_{n \in \N}$, and let $(X_n)_{n \in \N}$ be a sequence of Banach spaces. We define the $E$-direct sum of these spaces by
\begin{equation*}
    \left(\bigoplus_{n \in \N} X_n \right)_{E}
    =
    \left\{ x = (x_n)_{n \in \N} \in \prod_{n \in \N} X_n : \sum_{n \in \N} \norm{x_n} e_n \in E \right\},
\end{equation*}
and equip it with the norm
\begin{equation*}
    \norm{(x_n)_{n \in \N}}
    =
    \left\| \sum_{n \in \N} \norm{x_n} e_n \right\|,
\end{equation*}
which makes it a Banach space. In the case where all the spaces $X_n$ are equal to a space $X$, we simply denote this space by $E(\N, X)$. In the particular case of $\ell_p$-spaces, we mostly omit the index set $\mathbb{N}$ and write $\ell_p(X)$ for $\ell_p(\N, X)$. Observe that this sum depends not only on the Banach space $E$, but also on the choice of basis $(e_n)_{n \in \N}$; however, this dependence will remain implicit throughout.

We consider now the case where $X_n = X$ for all $n \in \N$. For each $x = (x_n)_{n \in \N} \in E(\N, X)$, we define the \emph{support} of $x$ by
\begin{equation*}
    \supp(x) = \{n \in \N : x_n \neq 0\}.
\end{equation*}
For each $M \subseteq \N$, we define the closed subspace
\begin{equation*}
    E(M, X) = \{x \in E(\N, X) : \supp(x) \subseteq M\} \subseteq E(\N, X),
\end{equation*}
and let $P_M \colon E(\N, X) \to E(M, X)$ and $J_M \colon E(M, X) \to E(\N, X)$ denote the canonical projection and inclusion, respectively. Then, naturally, $P_M J_M = I_{E(M, X)}$. In the case where $M = \{m\}$ is a singleton, we simply write $P_m$ and $J_m$ in place of $P_{\{m\}}$ and $J_{\{m\}}$.

To each operator $T \colon E(\N, X) \to E(\N, X)$ we can associate a matrix of operators $(T_{n,m})_{n,m \in \N}$ in the natural way, namely by defining $T_{n,m} \colon X \to X$ by
\begin{equation*}
    T_{n,m} = P_n T J_m.
\end{equation*}
Then, for all $x = (x_n)_{n \in \N}$ and all $n \in \N$, we have
\begin{equation*}
    P_n T x = \sum_{m \in \N} T_{n,m} x_m.
\end{equation*}

Let $T \colon E(\N, X) \to E(\N, X)$ be an operator with matrix representation $(T_{n,m})_{n,m \in \N}$. We say that $T$ is \emph{upper triangular} if
\begin{equation*}
    T_{n,m} = 0 \qquad \text{whenever } n > m.
\end{equation*}
Similarly, we say that $T$ is \emph{lower triangular} if
\begin{equation*}
    T_{n,m} = 0 \qquad \text{whenever } n < m.
\end{equation*}
If $T$ is both upper and lower triangular, we say that $T$ is \emph{diagonal}.

Let $(T_n)_{n \in \N}$ be a sequence of operators $T_n \colon X \to X$ such that $\sup_{n \in \N} \norm{T_n} < \infty$. Then we may define the diagonal operator
\begin{equation*}
    T = \diag(T_n : n \in \N)
\end{equation*}
to be the operator $T \colon E(\N, X) \to E(\N, X)$ given by
\begin{equation*}
    P_m T x = T_m x_m
\end{equation*}
for all $m \in \N$ and all $x = (x_n)_{n \in \N}$. It is clear that $T$ is an operator and that
\begin{equation*}
    \norm{T} = \sup_{n \in \N} \norm{T_n}.
\end{equation*}

\subsubsection{The $\ell_\infty$ case.}

For a sequence $(X_n)_{n \in \N}$ of Banach spaces, one analogously defines the $\ell_\infty$-direct sum
\begin{equation*}
    \left(\bigoplus_{n \in \N} X_n \right)_{\ell_\infty}
    =
    \left\{ x = (x_n)_{n \in \N} \in \prod_{n \in \N} X_n : \sup_{n \in \N} \norm{x_n} < \infty \right\},
\end{equation*}
equipped with the norm
\begin{equation*}
    \norm{(x_n)_{n \in \N}} = \sup_{n \in \N} \norm{x_n}.
\end{equation*}
In the case where $X_n = X$ for all $n \in \N$, we simply write $\ell_\infty(X)$. We carry over most of the notation introduced above in the natural way. Thus, for instance, if $M \subseteq \N$, we still write $\ell_\infty(M, X)$ for the closed subspace consisting of those elements whose support is contained in $M$, and we denote by $P_M$ and $J_M$ the canonical projection and inclusion, respectively.

Moreover, for an operator $T \colon \ell_\infty(X) \to \ell_\infty(X)$, we may still associate to it an operator matrix $(T_{n,m})_{n,m \in \N}$ by setting
\begin{equation*}
    T_{n,m} = P_n T J_m.
\end{equation*}
However, in this setting, one must proceed with care, since the matrix does not, in general, determine the operator, as the following elementary example shows.

\begin{example}
    Let $X = \mathbb{K}$ and consider the operator $T\colon\ell_\infty(\mathbb{K})\to\ell_\infty(\mathbb{K})$ given by
    \begin{equation*}
        Tx=(\Lambda(x),0,0,\dots),
    \end{equation*}
    where $\Lambda$ is a Banach limit; see, for instance, \cite[Theorem~III.7.1]{Conway1990}. Thus $\Lambda\in\ell_\infty^*$, and $\Lambda$ agrees with the usual limit on convergent sequences. In particular, $\Lambda$ vanishes on $c_0$. Hence $T\neq 0$, while $T_{n,m} = P_n T J_m=0$ for all $n,m\in\N$, since $J_m \mathbb{K}\subseteq c_0$.
\end{example}

Thus, in contrast with the case of unconditional sums, the operator matrix does not in general suffice to describe the behaviour of an operator. This leads us to introduce the following stronger notion of lower triangularity.

\begin{definition}\label{def: lower-triangular-special}
    Let $T \colon \ell_\infty(X) \to \ell_\infty(X)$ be an operator. We say that $T$ is \emph{lower triangular} if
    \begin{equation*}
        T_{n,m} = 0 \qquad \text{whenever } n < m,
    \end{equation*}
    and, moreover, $T$ acts according to its matrix, in the sense that
    \begin{equation*}
        P_n T x = \sum_{m \leq n} T_{n,m} x_m
    \end{equation*}
    for all $n \in \N$ and all $x = (x_m)_{m \in \N} \in \ell_\infty(X)$.
\end{definition}

Similarly, we strengthen the notion of a diagonal operator.

\begin{definition}
    Let $T \colon \ell_\infty(X) \to \ell_\infty(X)$ be an operator. We say that $T$ is \emph{diagonal} if
    \begin{equation*}
        T_{n,m} = 0 \qquad \text{whenever } n \neq m,
    \end{equation*}
    and, moreover, $T$ acts according to its matrix, in the sense that
    \begin{equation*}
        P_m T x = T_{m,m} x_m
    \end{equation*}
    for all $m \in \N$ and all $x = (x_n)_{n \in \N} \in \ell_\infty(X)$.
\end{definition}

Naturally, given a uniformly bounded sequence of operators $T_{n}: X \to X$, we can still define the associated diagonal operator $T = \diag (T_n: n \in \N)$.

\subsection{Background and topological results in \texorpdfstring{$C(\alpha)$}{C(alpha)}}

We recall some standard topological notions, following the notation and presentation of Alspach and Benyamini \cite{AlspachBenyamini1977}. If $A$ is a subset of a topological space, we write $A^{(1)}$ for the set of accumulation points of $A$. The higher derived sets are then defined recursively by setting $A^{(\alpha+1)} = \bigl(A^{(\alpha)}\bigr)^{(1)}$ for each ordinal $\alpha$, and $A^{(\alpha)} = \bigcap_{\beta<\alpha} A^{(\beta)}$ whenever $\alpha$ is a limit ordinal. If, for some ordinal $\alpha$, the set $A^{(\alpha)}$ is finite of cardinality $n$, then $(\alpha,n)$ is called the \emph{characteristic system} of $A$.

Now let $A$ be a compact subset of an ordinal space. In this case, the subspace topology on $A$ coincides with the order topology inherited by $A$. Consequently, there is an ordinal $\alpha$ such that $A^{(\alpha)}$ is finite. We denote this $\alpha$ by $\tau(A)$ and refer to it as the \emph{type} of $A$. Moreover, two compact subsets of ordinal spaces are homeomorphic if and only if they have the same characteristic system; see, for instance, \cite{Baker1972}. In particular, we shall use the following classical consequence.

\begin{theorem}\label{th: Baker}
    Let $A$ be a compact subset of an ordinal space with characteristic system $(\eta,n)$, where $\eta$ is an ordinal and $n \in \N$. Then $A$ is homeomorphic to $[0,\omega^\eta \cdot n]$.
\end{theorem}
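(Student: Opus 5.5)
The natural route is a transfinite induction on the type, using the Cantor--Bendixson structure. The reduction is to compact countable sets: $A^{(\eta)}$ is finite with $n$ points, and I expect that in the general setting (arbitrary, possibly uncountable, ordinals) the argument should go through Mazurkiewicz--Sierpiński-type classification via ordinal arithmetic. More concretely, I would establish the statement by transfinite induction on $\eta$, proving simultaneously for all $n$ that a compact subset $A$ of an ordinal space with characteristic system $(\eta,n)$ is homeomorphic to $[0,\omega^\eta\cdot n]$.

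\textbf{Reduction to $n=1$.} Let $A^{(\eta)}=\{a_1<\dots<a_n\}$. Since ordinal spaces are zero-dimensional and totally ordered, I would choose clopen intervals $U_i=A\cap(b_{i},a_i]$, with $b_i$ chosen so that $a_{i-1}\le b_i<a_i$. These partition $A$, with $U_i^{(\eta)}=\{a_i\}$. If each $U_i\cong[0,\omega^\eta]$, then $A$ is homeomorphic to a disjoint union of $n$ copies of $[0,\omega^\eta]$. That union is in turn homeomorphic to $[0,\omega^\eta\cdot n]$, by splitting the latter at the ordinals $\omega^\eta\cdot k$ into clopen pieces $(\omega^\eta\cdot(k-1),\omega^\eta\cdot k]\cong[0,\omega^\eta]$ (for $k\ge2$ the translation map is an order isomorphism onto a successor-started interval).

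\textbf{Case $n=1$, $A^{(\eta)}=\{a\}$.} If $\eta=0$, then $A=\{a\}\cong[0,1]$, which matches $\omega^0\cdot 1=1$. For $\eta>0$, the point $a$ is a limit of $A$ (approached from below only, since $A$ is a set of ordinals), so I choose a cofinal increasing sequence or net in $A\cap[0,a)$. Note that $a$ need not have countable cofinality, since $\eta$ may be uncountable; this is a genuine issue. I would take a strictly increasing cofinal family $(c_\xi)_{\xi<\kappa}$ in $A\cap[0,a)$ with $\kappa=\mathrm{cf}$, chosen so that the blocks $A\cap(c_\xi,c_{\xi+1}]$ and $A\cap[0,c_0]$ are clopen and, using continuity at limits, each point is covered. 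Each block $B$ is compact with $B^{(\eta)}=\emptyset$, so it has characteristic system $(\zeta,m)$ with $\zeta<\eta$. By induction, $B\cong[0,\omega^\zeta\cdot m]$. Moreover, since $a\in A^{(\eta)}$, for every $\zeta<\eta$ the blocks beyond any stage must contain points of $A^{(\zeta)}$, so the types $\zeta$ are cofinal in $\eta$ (if $\eta$ is a limit) or equal to $\eta-1$ infinitely often (if $\eta$ is a successor). The same decomposition applies to $[0,\omega^\eta]$ itself.

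\textbf{Gluing.} The main obstacle is matching the two block decompositions, that is, showing that a "sum" of blocks with the correct profile is homeomorphic to $[0,\omega^\eta]$. I would instead avoid explicit matching by constructing the homeomorphism directly. I would define the order-type-collapsing map $\varphi\colon A\to\mathrm{Ord}$ recursively via Cantor normal form of Cantor--Bendixson ranks, or more simply apply the known theorem that a compact ordinal-space subset is order-isomorphic to a successor ordinal $\gamma+1$, where the order isomorphism is a homeomorphism because $A$ is closed (so the order and subspace topologies agree, as noted above). This reduces the statement to an ordinal-arithmetic computation: the rank structure of $[0,\gamma]$ is determined by the Cantor normal form $\gamma=\omega^{\beta_1}k_1+\dots$, and $[0,\gamma]^{(\eta)}$ finite of size $n$ forces $\gamma=\omega^\eta\cdot (n-1)+\delta$ with $\delta\le\omega^\eta$ and $\omega^\eta\cdot n\le \gamma<\omega^{\eta}\cdot(n+1)$, that is, $\gamma=\omega^\eta\cdot n+\rho$ with $\rho<\omega^\eta$. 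Finally, $[0,\omega^\eta\cdot n+\rho]\cong[0,\omega^\eta\cdot n]$ by absorbing the tail $(\omega^\eta n,\omega^\eta n+\rho]$, which is homeomorphic to $[0,\rho']$ with $\rho'<\omega^\eta$, into the first block. This uses $\rho'+1+\omega^\eta=\omega^\eta$, since $\omega^\eta$ is additively indecomposable. That absorption is the step I expect to need the most care.
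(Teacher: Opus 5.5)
Your last paragraph is by itself a complete proof (for $\eta\geq 1$; see below), and it takes a genuinely different route from the paper. The paper gives no argument of its own. It quotes Baker's classification (compact subsets of ordinal spaces are homeomorphic if and only if they have the same characteristic system), so the statement reduces to knowing that $[0,\omega^\eta\cdot n]$ has characteristic system $(\eta,n)$.

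Your argument runs as follows:
\begin{enumerate}
\item $A$ is closed, so it is order-isomorphic, and hence homeomorphic, to some $[0,\gamma]$.
\item You compute $[0,\gamma]^{(\eta)}=\{\omega^\eta\cdot\zeta : \zeta\geq 1,\ \omega^\eta\cdot\zeta\leq\gamma\}$. This is a routine transfinite induction on $\eta$, but you should state and prove it explicitly, since everything rests on it.
\item You deduce $\gamma=\omega^\eta\cdot n+\rho$ with $\rho<\omega^\eta$.
\item You absorb the clopen tail into the clopen block $[0,\omega^\eta]$ via $\rho'+1+\omega^\eta=\omega^\eta$.
\end{enumerate}
This is self-contained and in effect re-proves Baker's classification; the citation is shorter.

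Once you take this route, the transfinite induction, the reduction to $n=1$ and the cofinal block decomposition are unnecessary. That is just as well, because the block scheme fails when $\mathrm{cf}(a)>\omega$. For a limit $\lambda<\mathrm{cf}(a)$, the point $\sup_{\xi<\lambda}c_\xi\in A$ lies in no block $A\cap(c_\xi,c_{\xi+1}]$, and each of its neighbourhoods meets infinitely many blocks. So $A\setminus\{a\}$ is not a topological sum of such blocks, whatever ``continuity at limits'' is meant to provide.

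The one outright error is your base case. The set $\{a\}$ is not homeomorphic to $[0,1]=\{0,1\}$, which has two points. In fact the statement as written is false for $\eta=0$. Characteristic system $(0,n)$ means $A$ is an $n$-point set, so $A\cong[0,n-1]$, whereas $[0,\omega^0\cdot n]=[0,n]$ has $n+1$ points.

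Your general argument detects this if run honestly. The formula for $[0,\gamma]^{(\eta)}$ above excludes the point $0$ only when $\eta\geq1$. For $\eta=0$ one has $[0,\gamma]^{(0)}=[0,\gamma]$, and the count gives $\gamma=n-1$. So you should state and prove the result for $\eta\geq 1$, with the corrected conclusion $A\cong[0,n-1]$ when $\eta=0$.

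The paper's citation-based deduction has the same edge case, since $[0,n]$ has characteristic system $(0,n+1)$. It is harmless for the paper's applications, where $\eta=0$ only involves one-point sets or finite-dimensional spaces $C(\alpha)$.
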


We also record the following elementary fact.

\begin{lemma}[Finite indivisibility of closed subsets of ordinal intervals]\label{lmm: indivisibility-compact-subsets}
    Let $A$ be a closed subset of an ordinal space with characteristic system $(\eta,1)$ for some ordinal $\eta$. Suppose that $A_1,\dots,A_N \subseteq A$ are closed subsets such that $A = \bigcup_{n=1}^N A_n$. Then there exists $1 \leq n_0 \leq N$ such that $A_{n_0}$ is homeomorphic to $A$.
\end{lemma}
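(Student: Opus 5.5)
Since $A^{(\eta)}$ is a single point $\{a\}$, the idea is to find a piece $A_{n_0}$ with the same characteristic system $(\eta,1)$ and then invoke \Cref{th: Baker}.

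We have
\begin{equation*}
    A^{(\eta)} = \bigl(A_1 \cup \dots \cup A_N\bigr)^{(\eta)} = A_1^{(\eta)} \cup \dots \cup A_N^{(\eta)}.
\end{equation*}
This holds because taking derived sets commutes with finite unions of closed sets. For a single derivation, a point is an accumulation point of a finite union exactly when it is an accumulation point of one of the pieces. Since finite unions of closed sets are closed, the identity propagates through successor stages by induction.

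At limit stages this needs an extra argument. The inclusion $\bigcap_{\beta<\lambda}\bigcup_n A_n^{(\beta)} \supseteq \bigcup_n \bigcap_{\beta<\lambda} A_n^{(\beta)}$ is trivial. For the reverse, take $x$ in the left-hand side. For each $\beta<\lambda$ there is some $n(\beta)$ with $x\in A_{n(\beta)}^{(\beta)}$. Some index $n$ occurs for cofinally many $\beta$, and since the derived sets decrease in $\beta$, we get $x\in A_n^{(\beta)}$ for all $\beta<\lambda$. This pigeonhole step at limit ordinals is the only point needing care; the rest is bookkeeping.

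Consequently $a \in A_{n_0}^{(\eta)}$ for some $n_0$. Also $A_{n_0}^{(\eta)} \subseteq A^{(\eta)} = \{a\}$ by monotonicity of derived sets, so $A_{n_0}^{(\eta)}=\{a\}$. Moreover $A_{n_0}$ is closed in $A$, which is compact, so $A_{n_0}$ is a compact subset of an ordinal space with characteristic system $(\eta,1)$. By \Cref{th: Baker}, both $A$ and $A_{n_0}$ are homeomorphic to $[0,\omega^\eta]$, hence to each other.
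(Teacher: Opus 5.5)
Your proof is correct and follows the same route as the paper: derived sets commute with finite unions of closed sets, so $\{a\}=\bigcup_{n}A_n^{(\eta)}$, hence some $A_{n_0}$ has characteristic system $(\eta,1)$, and \Cref{th: Baker} then gives the homeomorphism. The only difference is that you also spell out what the paper takes for granted, namely the transfinite induction with the pigeonhole step at limit ordinals, the monotonicity $A_{n_0}^{(\eta)}\subseteq A^{(\eta)}$, and the compactness of $A_{n_0}$ needed to apply \Cref{th: Baker}.
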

\begin{proof}
    Since $A$ has characteristic system $(\eta,1)$, the set $A^{(\eta)}$ consists of exactly one point, say $A^{(\eta)} = \{a\}$. Since derived sets commute with finite unions of closed sets, we have
    \begin{equation*}
        \{a\} = A^{(\eta)} = \bigcup_{n=1}^N A_n^{(\eta)}.
    \end{equation*}
    It follows that there exists $1 \leq n_0 \leq N$ such that $A_{n_0}^{(\eta)} = \{a\}$. Therefore, $A_{n_0}$ has characteristic system $(\eta,1)$, and hence $A_{n_0}$ is homeomorphic to $A$ by \Cref{th: Baker}.
\end{proof}

We will need the following definition.

\begin{definition}
    Let $K$ be a compact Hausdorff space and let $A\subseteq K$ be closed. Denote by $R_A\colon C(K)\to C(A)$ the restriction operator. We say that an operator $S_A\colon C(A)\to C(K)$ is a \emph{simultaneous extension operator} if
\begin{equation*}
    R_A S_A = I_{C(A)}.
\end{equation*}
\end{definition}

We now introduce several facts concerning operators acting on $C(\alpha)$, as proven by Alspach and Benyamini. We start with the existence of a simultaneous extension operator \cite[Lemma 1.1 (c)]{AlspachBenyamini1977}.

\begin{proposition}[Simultaneous extension operator]\label{prop: simultaneous-extension}
    Let $\alpha$ be an ordinal, and let $A$ be a closed subset of $[0,\alpha]$. Then there exists a norm-one simultaneous extension operator
    \begin{equation*}
        S_A \colon C(A) \to C(\alpha).
    \end{equation*}
\end{proposition}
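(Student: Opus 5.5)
The plan is to prove the existence of a norm-one simultaneous extension operator $S_A\colon C(A)\to C(\alpha)$ by writing the complement $[0,\alpha]\setminus A$ as a disjoint union of intervals and extending each function constantly on each of them. Since $A$ is closed, its complement $U=[0,\alpha]\setminus A$ is open in the order topology. We may assume $A\neq\emptyset$; otherwise $C(A)=\{0\}$ and $S_A=0$ works.

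First I decompose $U$ into its maximal order-convex components. Each component $I$ is an interval of ordinals. Let $\gamma_I=\sup I$, which is well defined since $I\subseteq[0,\alpha]$. The key observation is that $I$ has a last point. If $\gamma_I\in I$, this is immediate. Otherwise $\gamma_I$ is a limit of points of $I$; moreover $\gamma_I\notin U$, since otherwise convexity and openness would enlarge the component. Hence $\gamma_I\in A$.

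For each component, I choose a point $a_I\in A$ to which every $f\in C(A)$ is extended constantly.
\begin{itemize}
\item If $\gamma_I\in A$ and $\gamma_I\notin I$, set $a_I=\gamma_I$.
\item If $I$ contains its maximum $\gamma_I$, then $I=[\beta,\gamma_I]$ for some $\beta$. In this case set $a_I=\min\{\xi\in A:\xi>\gamma_I\}$ if that set is nonempty, and otherwise $a_I=\max\{\xi\in A:\xi<\beta\}$. The latter exists because $A$ is closed and nonempty, and $A\cap[0,\beta)$ is closed, so it has a maximum whenever it is nonempty.
\end{itemize}

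Now define $S_Af(t)=f(t)$ for $t\in A$, and $S_Af(t)=f(a_I)$ for $t$ in a component $I$. This map is linear, satisfies $R_AS_A=I_{C(A)}$ and $\|S_Af\|_\infty=\|f\|_\infty$, so $\|S_A\|=1$ whenever $A\neq\emptyset$.

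The main obstacle is checking that $S_Af$ is continuous. At points of $U$ it is locally constant, because components of $U$ are open: in ordinal spaces, components of open sets are open, since basic neighbourhoods are intervals. Continuity at a point $t\in A$ splits into two cases.

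\begin{itemize}
\item \emph{Approach from the right.} Right neighbourhoods $[t,t+1)=\{t\}$ are trivial in the order topology, so nothing needs to be checked.
\item \emph{Approach from the left at a limit point $t$.} Take a net $t_j\uparrow t$ lying in components $I_j$. If $t_j$ eventually stays in a single component $I$, then $t=\sup I\notin I$, so $a_I=t$ and the values $S_Af(t_j)=f(t)$ agree. Otherwise the components $I_j$ vary and cofinally approach $t$. Their chosen points $a_{I_j}$ are either $\sup I_j$ or the first point of $A$ after $I_j$; in both cases $a_{I_j}\le t$ and $a_{I_j}\to t$. Continuity of $f$ on $A$ then gives $f(a_{I_j})\to f(t)$.
\end{itemize}

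The fallback choice $a_I=\max(A\cap[0,\beta))$ occurs only for the last component $I$, the one lying above all of $A$. That component is clopen, so it does not interfere with continuity. This is the step that requires care, which is why the rule prefers points to the right of each component.
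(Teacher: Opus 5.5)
Your proof is correct. The paper gives no argument for this statement; it imports it from Lemma 1.1(c) of Alspach and Benyamini. Your write-up is therefore a self-contained substitute rather than a variant of something in the text.

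Your construction is the standard one in disguise: $S_Af=f\circ r$, where $r\colon[0,\alpha]\to A$ is the retraction given by $r(t)=\min\bigl(A\cap[t,\alpha]\bigr)$ if $A\cap[t,\alpha]\neq\emptyset$, and $r(t)=\max A$ otherwise. You can check this component by component:
\begin{itemize}
\item on a component $I$ with $\sup I\notin I$, this minimum is $\sup I$;
\item on a component with maximum $\gamma_I<\alpha$, it is $\gamma_I+1$, which is your $\min\{\xi\in A:\xi>\gamma_I\}$;
\item only the top component falls back to $\max A$.
\end{itemize}

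Writing it this way removes the component bookkeeping. It also shows that $S_A$ is in fact an isometric, positive, multiplicative composition operator. Most importantly, it reduces continuity at $t\in A$ to the squeeze $u\le r(u)\le t$ for all $u\in(s,t]$. This single inequality covers both branches of your net dichotomy at once. It also covers net points lying in $A$ itself, which you do not mention, and nets that are not monotone; you only treat $t_j\uparrow t$.

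A few small inaccuracies are worth correcting. The sentence that $I$ ``has a last point'' is false when $\sup I\notin I$; what you actually prove is $\sup I\in I\cup A$. The set $[0,\beta)$ need not be closed when $\beta$ is a limit ordinal. So the existence of $\max(A\cap[0,\beta))$ should be justified by $A\cap[0,\beta)=A\cap[0,\beta]$, which holds because $\beta\in I$ is not in $A$.
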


We shall also need the following consequence of the main result of Alspach and Benyamini \cite[Theorem 1]{AlspachBenyamini1977}.

\begin{theorem}[Alspach--Benyamini]\label{th: alspach-benyamini}
    Let $\alpha$ be an ordinal. Suppose that $C(\alpha)$ is not isomorphic to $C(\xi \cdot n)$ for any uncountable regular ordinal $\xi$ and any $n \in \mathbb{N}$ with $n \geq 2$. Then $C(\alpha)$ has the UPFP.
\end{theorem}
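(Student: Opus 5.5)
The plan is to deduce the statement from \cite[Theorem~1]{AlspachBenyamini1977} in three steps: normalise $\alpha$, extract a "large" closed set on which $T$ or $I-T$ is invertible, and turn that into a factorisation of the identity with a uniform constant.

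\emph{Step 1: normalise $\alpha$.} UPFP is an isomorphic invariant. If $U\colon X\to Y$ is an isomorphism and $X$ has the $C$-PFP, then $Y$ has the $C\norm{U}^2\norm{U^{-1}}^2$-PFP. Indeed, apply the $C$-PFP of $X$ to $U^{-1}TU$, and note that $U^{-1}(I_Y-T)U=I_X-U^{-1}TU$. So we may replace $\alpha$ by a convenient representative of the isomorphism class of $C(\alpha)$.

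\begin{itemize}
\item By the Bessaga--Pe{\l}czy\'nski and Semadeni--Kislyakov classification, every infinite $C(\alpha)$ is isomorphic to some $C(\omega^\eta\cdot n)$.
\item Since $\omega^\eta\cdot n\simeq\omega^\eta$ whenever $[0,\omega^\eta]$ is "divisible", for example $C(\omega^\eta)\simeq C(\omega^\eta)\oplus C(\omega^\eta)$ when $\mathrm{cf}(\omega^\eta)=\omega$, one can take $n=1$ except when the top point of $[0,\omega^\eta]$ has uncountable regular cofinality $\xi$.
\item That exceptional situation, with $n\ge2$, is exactly what the hypothesis excludes.
\end{itemize}

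We may therefore assume that $[0,\alpha]$ has characteristic system $(\eta,1)$. This is the setting of \Cref{lmm: indivisibility-compact-subsets}.

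\emph{Step 2: the Alspach--Benyamini dichotomy.} I will use \cite[Theorem~1]{AlspachBenyamini1977} in the following form. There are a constant $C_0$, depending only on $\alpha$, and, for every $T\in\mathscr{B}(C(\alpha))$ of norm at most $\norm{T}$, a closed set $A\subseteq[0,\alpha]$ homeomorphic to $[0,\alpha]$ such that one of the following holds for $S=T$ or $S=I-T$:
\begin{equation*}
    \norm{R_A S S_A f}\ \geq\ C_0^{-1}\norm{f}\qquad(f\in C(A)),
\end{equation*}
with $R_A S S_A$ invertible on $C(A)$, where $S_A$ is the norm-one extension operator of \Cref{prop: simultaneous-extension}.

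If the theorem is instead stated as "$S$ fixes a complemented copy of $C(\alpha)$", I would extract $A$ as follows.
\begin{itemize}
\item Split $[0,\alpha]$ into finitely many closed pieces according to whether $T$ or $I-T$ is "large" near each point, in the sense of the relevant measures $T^*\delta_t$ compared with $\delta_t$.
\item Invoke \Cref{lmm: indivisibility-compact-subsets} to select one piece $A$ homeomorphic to $[0,\alpha]$.
\item Pass to a further subset of the same type on which the off-diagonal part of the kernel is small.
\end{itemize}

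\emph{Step 3: factorise the identity.} Let $\varphi\colon[0,\alpha]\to A$ be a homeomorphism, which exists by \Cref{th: Baker}, and let $C_\varphi\colon C(A)\to C(\alpha)$ be the induced isometry. Put $V=R_ASS_A$ and
\begin{equation*}
    U=C_\varphi V^{-1}R_A,\qquad W=S_AC_\varphi^{-1}.
\end{equation*}
Then $USW=I_{C(\alpha)}$ and $\norm{U}\norm{W}\le C_0$. So $I_{C(\alpha)}$ factors through $T$ or $I-T$ with constant $C_0$, which is the UPFP.

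\emph{Main obstacle.} The hard part is uniformity in Step 2. The constant must not depend on $T$, and the set $A$ must remain of full type. I expect the delicate points to be those of uncountable cofinality, where one must use that every operator is eventually "constant-like" near such a point, by a pressing-down argument on $T^*\delta_t$. This is also precisely where the excluded spaces $C(\xi\cdot n)$, $n\ge2$, fail: the projection onto one of the $n$ top blocks gives a $T$ for which neither $T$ nor $I-T$ fixes a copy of $C(\xi\cdot n)$.
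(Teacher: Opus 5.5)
Your skeleton matches the paper's: isomorphism invariance of the UPFP plus a uniform version of the Alspach--Benyamini dichotomy. The paper does not reprove the result; it extracts uniform constants from the proof in \cite{AlspachBenyamini1977}, with the details in \cite{Acuaviva2025}. The trouble is that the statement you feed into Step~3 is false, so Step~3 has nothing to act on.

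\textbf{Step 2 fails.} After your normalisation, write $[0,\alpha]^{(\eta)}=\{a\}$. Any closed $A\subseteq[0,\alpha]$ homeomorphic to $[0,\alpha]$ satisfies $\emptyset\neq A^{(\eta)}\subseteq[0,\alpha]^{(\eta)}$. Hence $a\in A$ and $(S_Af)(a)=f(a)$. Now take $Tf=f(a)\mathbf{1}_{[0,\alpha]}$. Then $R_ATS_A$ has rank one, and
\begin{equation*}
    R_A(I-T)S_Af=f-f(a)\mathbf{1}_A \qquad (f\in C(A)),
\end{equation*}
which annihilates $\mathbf{1}_A$. So neither compression is invertible for any admissible $A$. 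Yet $I-T$ is the projection onto the hyperplane $\{f:f(a)=0\}\simeq C(\alpha)$, so it certainly factors the identity.

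The same example shows why your extraction sketch breaks down:
\begin{itemize}
\item The diagonal $t\mapsto (T^*\delta_t)(\{t\})$ is not continuous. For $I-T$ it equals $1$ on $[0,\alpha]\setminus\{a\}$ and $0$ at $a$. So the regions where $T$ or $I-T$ is large are not closed, and \Cref{lmm: indivisibility-compact-subsets} cannot be applied to them.
\item The off-diagonal part $-\delta_a$ of $(I-T)^*\delta_t$ never becomes small on a set of full type, because every such set contains $a$.
\end{itemize}
A compression to a closed copy of $[0,\alpha]$ is too rigid. Producing a suitable copy of $C(\alpha)$ on which $T$ or $I-T$ is bounded below, with constants independent of $T$, is the real content of the Alspach--Benyamini argument. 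Your proposal assumes a version of it that does not hold.

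\textbf{Step 1 also has a gap.} The hypothesis excludes spaces isomorphic to $C(\xi\cdot n)$ where $\xi$ is itself an uncountable regular ordinal. It does not exclude every $\omega^\eta\cdot n$ whose top point merely has uncountable cofinality.

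Take $\alpha=\omega_1^2\cdot 2$. The top point of $[0,\omega_1^2]$ has cofinality $\omega_1$ and $n=2$, yet $C(\alpha)\not\simeq C(\xi\cdot m)$ for every admissible $\xi$ and $m$:
\begin{itemize}
\item density forces $\xi=\omega_1$;
\item $C(\alpha)$ contains a complemented copy of $C(\omega_1)^{m+1}$;
\item that copy cannot be complemented in $C(\omega_1)^m$, by a rank argument using the Loy--Willis character on $\mathscr{B}(C(\omega_1))$.
\end{itemize}
So this $\alpha$ satisfies the hypothesis, but your argument does not reduce it to characteristic system $(\eta,1)$.

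The correct dividing line is whether $\omega^\eta$ is itself regular. Suppose $\kappa=\mathrm{cf}(\omega^\eta)<\omega^\eta$. Retract onto a closed cofinal copy of $[0,\kappa]$; for $\omega^\eta=\omega_1^2$ this is $\{\omega_1\cdot\beta:\beta\le\omega_1\}$. The kernel of this retraction is a $c_0(\kappa)$-sum of the $C_0$-spaces of the blocks. Pe{\l}czy\'nski's decomposition method then gives $C(\omega^\eta)\simeq C(\omega^\eta)\oplus C(\omega^\eta)$; for example, $C(\omega_1^2)\simeq c_0(\omega_1,C(\omega_1))$. This must be proved, or quoted from the isomorphic classification, rather than asserted.
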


The previous theorem is not stated explicitly in this form in \cite{AlspachBenyamini1977}, but it can be extracted directly from the proof of their main result; see \cite[Proposition 1.9]{Acuaviva2025} for a detailed exposition. Moreover, the estimates in each step can be made quantitative. Since the UPFP is preserved under isomorphisms \cite[Proposition 2.5]{AcuavivaKania2026}, this yields the stated UPFP conclusion for the ordinal spaces covered by \Cref{th: alspach-benyamini}.

\subsection{Background and topological results in \texorpdfstring{$C[0,1]$}{C[0,1]}}

Similarly, we recall some background results for the unit interval $[0,1]$. We have the following formulation for topological indivisibility in this case.

\begin{lemma}[Countable topological indivisibility of $\lbrack 0,1\rbrack$] \label{lmm: countable-top-indiv}
    Let $(A_n)_{n=1}^\infty$ be a sequence of closed subsets of $[0,1]$ such that
    \begin{equation*}
        [0,1] = \bigcup_{n \in \N} A_n.
    \end{equation*}
    Then there exist $n_0 \in \N$ and a closed subset $A \subseteq A_{n_0}$
    such that $A$ is homeomorphic to $[0,1]$.
\end{lemma}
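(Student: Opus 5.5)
The approach is to combine the Baire category theorem with the fact that every nondegenerate closed interval in $[0,1]$ is homeomorphic to $[0,1]$. The statement is purely topological, so nothing from the operator-theoretic part of the paper is needed.

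First, I note that $[0,1]$ is a compact metric space, hence a complete metric space, and so a Baire space. Since $[0,1]=\bigcup_{n\in\N}A_n$ is a countable union of closed sets, the Baire category theorem says that not every $A_n$ can be nowhere dense. Hence there is some $n_0\in\N$ such that $A_{n_0}$ has nonempty interior in $[0,1]$. Because $A_{n_0}$ is closed, its interior is simply a nonempty open subset of $[0,1]$ contained in $A_{n_0}$.

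Next, any nonempty open subset of $[0,1]$ contains a nondegenerate open interval, relatively open in $[0,1]$. So I can pick points $a<b$ with $[a,b]\subseteq A_{n_0}$; to get a closed interval, shrink slightly inside the open interval. Set $A=[a,b]$. This set is closed, and the affine map $t\mapsto a+t(b-a)$ is a homeomorphism from $[0,1]$ onto $A$. This gives the conclusion.

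The only step with any content is the appeal to the Baire category theorem. The one point that needs care is working with the relative topology of $[0,1]$, for example when the interior of $A_{n_0}$ touches an endpoint. This is harmless: a relatively open set of the form $[0,\varepsilon)$ still contains a nondegenerate closed interval such as $[0,\varepsilon/2]$. I do not expect any genuine obstacle.
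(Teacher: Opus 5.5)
Your proposal is correct and follows essentially the same route as the paper: the Baire category theorem gives some $A_{n_0}$ with nonempty interior, and a nondegenerate closed interval $[a,b]$ inside that interior is the required copy of $[0,1]$. Your remark about relatively open sets touching an endpoint is a harmless extra detail that the paper leaves implicit.
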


\begin{proof}
    Since $[0,1]$ is compact and metrizable, it is a Baire space. Hence, it
    cannot be written as a countable union of closed subsets with empty
    interior. Therefore there exists $n_0 \in \N$ such that $A_{n_0}$ has
    non-empty interior in $[0,1]$.

    Choose a non-empty relatively open subset $U$ of $[0,1]$ such that $U \subseteq A_{n_0}$. Since $U$ is non-empty and relatively open in $[0,1]$, it contains a non-degenerate closed interval $A=[a,b]$, with $a<b$. Then $A$ is homeomorphic to $[0,1]$, and $A \subseteq A_{n_0}$.
\end{proof}

We also record the following special case of the Borsuk--Dugundji extension theorem, see \cite[Theorem 5.1]{Dugundji1951}, which is the only form needed in what follows.

\begin{theorem}[Borsuk--Dugundji extension theorem]\label{thm: borsuk-dugundji}
    Let $A$ be a closed subset of $[0,1]$. Then there exists a norm-one simultaneous extension operator $S_A \colon C(A) \to C[0,1]$.
\end{theorem}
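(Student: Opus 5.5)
The statement to prove is \Cref{thm: borsuk-dugundji}: every closed $A\subseteq[0,1]$ admits a norm-one simultaneous extension operator $S_A\colon C(A)\to C[0,1]$. Rather than invoke the general Dugundji machinery, I would build $S_A$ directly by linear interpolation on the gaps of $A$. We may assume $A\neq\emptyset$; if $A=\emptyset$, then $C(A)=\{0\}$ and $S_A=0$ works trivially.

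\emph{Construction.} Since $[0,1]\setminus A$ is relatively open in $[0,1]$, it is a countable disjoint union of relatively open intervals. Each such interval is of one of three kinds:
\begin{enumerate}[label=(\roman*)]
    \item a bounded gap $(a,b)$ with $a,b\in A$;
    \item an initial segment $[0,b)$ with $b=\min A$;
    \item a final segment $(a,1]$ with $a=\max A$.
\end{enumerate}
For $f\in C(A)$, define $S_Af$ as follows:
\begin{itemize}
    \item $S_Af=f$ on $A$;
    \item on a bounded gap $(a,b)$, $S_Af$ is the affine interpolation between $f(a)$ and $f(b)$;
    \item on $[0,b)$, $S_Af$ is the constant $f(b)$;
    \item on $(a,1]$, $S_Af$ is the constant $f(a)$.
\end{itemize}

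\emph{Easy properties.} Linearity of $f\mapsto S_Af$ is clear, since each piece depends linearly on finitely many values of $f$. The identity $R_AS_A=I_{C(A)}$ holds by construction. At every point, $S_Af$ is either a value of $f$ or a convex combination of two values of $f$, so $\|S_Af\|_\infty\le\|f\|_\infty$. Together with $R_AS_A=I$, this gives $\|S_A\|=1$.

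\emph{Main step: continuity of $S_Af$.} At points of the open set $[0,1]\setminus A$ continuity is immediate, since $S_Af$ is locally affine or constant there. The real issue is continuity at a point $x\in A$. We may assume $x$ is approached from the right by points of $[0,1]\setminus A$ (the left side is symmetric).
\begin{itemize}
    \item If $x$ is the left endpoint of a gap, then $S_Af$ is affine or constant on that gap and takes the value $f(x)$ at $x$, so it is continuous from the right.
    \item Otherwise, $x$ is a limit from the right of points of $A$. Given $\varepsilon>0$, uniform continuity of $f$ gives $\delta>0$ such that $|f(y)-f(x)|<\varepsilon$ for all $y\in A$ with $|y-x|<\delta$. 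Choose $a'\in A$ with $x<a'<x+\delta$. Any gap meeting $(x,a')$ has both endpoints in $A\cap[x,a']$, because it cannot contain the points $x$ or $a'$ of $A$. Hence on $(x,a')$ the function $S_Af$ is either a value $f(y)$ with $y\in A\cap[x,a']$ or a convex combination of two such values. Each of these lies within $\varepsilon$ of $f(x)$, so $|S_Af-f(x)|\le\varepsilon$ on $(x,a')$.
\end{itemize}
This shows $S_Af\in C[0,1]$, which completes the argument.

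The main obstacle is exactly this continuity check at accumulation points of $A$ lying next to infinitely many short gaps. The point is that gaps accumulating at $x$ have both endpoints close to $x$, so their interpolations stay close to $f(x)$.
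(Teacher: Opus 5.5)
Your proof is correct, and it differs from the paper, which does not prove this statement. The paper cites Dugundji's general theorem \cite[Theorem 5.1]{Dugundji1951}: for a closed subset of a metric space there is a linear extension operator whose values lie in the convex hull of the values of $f$, hence of norm one. It then only remarks that, for the closed intervals actually used later, linear interpolation suffices.

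You instead give a self-contained proof for an arbitrary closed $A\subseteq[0,1]$, using the order structure of $[0,1]$. The components of $[0,1]\setminus A$ are intervals with endpoints in $A$, and at each point your extension is a convex combination of at most two values of $f$, so the norm bound is immediate. The only real work is continuity at points of $A$ that are accumulation points of gaps, and you handle it correctly: a gap meeting $(x,a')$ contains neither $x$ nor $a'$, so it lies inside $(x,a')$ and both its endpoints are within $\delta$ of $x$.

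Your argument is essentially the one-dimensional version of Dugundji's proof. His partition of unity on $[0,1]\setminus A$, together with a choice of nearby points of $A$, is replaced by the affine weights on each gap and the gap's two endpoints. Dugundji's route gives generality (arbitrary metric spaces, locally convex targets). Yours gives an explicit, elementary operator that covers the full statement as written, not just the interval case the paper needs.

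The only nit concerns $A=\emptyset$. There the zero operator has norm $0$, so the norm-one claim should be read for nonempty $A$.
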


\begin{remark}
    Throughout this paper, $A$ will be a closed interval, so one does not need the full strength of the Borsuk--Dugundji extension theorem. Indeed, if $A=[a,b] \subseteq [0,1]$, a norm-one extension operator $S_A \colon C(A) \to C[0,1]$ can be obtained by linear interpolation.
\end{remark}

Finally, we record that $C[0,1]$ has the UPFP. The quantitative form needed for the UPFP can be extracted from Weis' proof \cite{Weis1986} of a theorem of Rosenthal \cite{Rosenthal1972}. The same conclusion may already be implicit in Rosenthal's original argument, although we have not verified this.

\begin{proposition}\label{prop: C01-UPFP}
    The space $C[0,1]$ has the UPFP.
\end{proposition}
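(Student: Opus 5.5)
This follows from a known proof that $C[0,1]$ is primary, made quantitative. I would use Rosenthal's dichotomy in the form given by Weis.

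\emph{Rosenthal's theorem.} If an operator $T\colon C[0,1]\to C[0,1]$ has non-separable dual image $T^*(C[0,1]^*)$, then $T$ fixes a copy of $C[0,1]$. That is, there is a subspace $Y\subseteq C[0,1]$ with $Y\simeq C[0,1]$ such that $T|_Y$ is an isomorphism onto a complemented subspace.

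Given $T$, at least one of $T^*$ and $(I-T)^*=I-T^*$ has non-separable range. Otherwise $I^*=T^*+(I-T)^*$ would have separable range, but $C[0,1]^*$ is non-separable. So I may assume $T^*$ has non-separable range, replacing $T$ by $I-T$ if necessary.

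\emph{Weis' construction.} Weis' argument produces a Cantor set $\Delta\subseteq[0,1]$, a closed set $F$ and a homeomorphism-induced embedding $V\colon C(\Delta)\to C[0,1]$ with the following properties.
\begin{enumerate}
\item $R_F T V$ is an isomorphism of $C(\Delta)$ onto a subspace of $C(F)$.
\item That subspace is complemented by a norm-one Borsuk--Dugundji type extension or averaging operator.
\end{enumerate}
The construction runs through a tree of disjoint clopen pieces and measures $\mu_s\in T^*(\text{ball})$ chosen so that $\|\mu_s-\mu_{s'}\|$ stays bounded below. Each step loses only an $\varepsilon$-controlled amount.

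\emph{The uniform estimate.} The key point is that every constant depends only on the chosen $\varepsilon$ and on $\|T\|$, not on $T$ itself. By the definition of the $C$-PFP, we may only normalise $\|T\|$ via $\max(\|T\|,\|I-T\|)\ge 1/2$. The real issue is that the lower bound in the non-separability dichotomy must be uniform. I would handle this as follows. Let $\lambda$ be Lebesgue measure, so $\delta_t$ for $t\in[0,1]$ gives an uncountable $2$-separated family in the dual ball. Then either $\{T^*\delta_t\}$ or $\{(I-T)^*\delta_t\}$ contains an uncountable subfamily that is $1$-separated. This gives a uniform $\delta=1$ for Weis' tree construction. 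The resulting isomorphism constant is $C(\|T\|)$, polynomial in $\|T\|$.

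\emph{Removing the dependence on $\|T\|$.} This is the main obstacle. A factorisation through $T$ with constant depending on $\|T\|$ does not by itself give the UPFP. I would use a standard trick: show that for large $\|T\|$ the identity factors through $T$ with a bounded constant. Alternatively, rescale so that the isomorphism lower bound divided by the upper bound is controlled, using $\|T|_Y^{-1}\|\le 1/\delta$ and bounding the projection norm by an absolute constant coming from the Cantor-set construction.

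\emph{Conclusion.} Finally, $C(\Delta)\simeq C[0,1]$ by Milutin's theorem, with a fixed isomorphism constant. Composing this with the factorisation above shows that $I_{C[0,1]}$ factors through $T$ or $I-T$ with a uniform constant.
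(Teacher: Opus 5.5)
Your splitting step is essentially right: it is the pointwise form of the paper's inequality $\omega_T(t)+\omega_{I-T}(t)\ge\omega_{I}(t)=2$. The claim of an uncountable $1$-separated subfamily is not justified, though. The natural argument goes as follows: if every $r$-separated subfamily of both $\{T^*\delta_t\}$ and $\{(I-T)^*\delta_t\}$ is countable, maximal ones cover $[0,1]$ by countably many sets on which both $T^*$ and $(I-T)^*$ move point masses apart by less than $2r$, and two points in one piece give $2<4r$. This yields only separation $1/2$, which is still uniform and is enough.

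The genuine gap is the next step, which turns the lower bound into a factorisation of $I_{C[0,1]}$ with a constant independent of $T$. Your paragraph on Weis' construction is only a description. You then assert that the resulting constant is $C(\|T\|)$ and leave the removal of this dependence open. What you offer there is either a restatement of the goal or an unproved claim that the projection has absolutely bounded norm. As written, the argument gives the PFP (via Rosenthal's theorem, Pe\l czy\'nski's complementation result and the non-separability of $C[0,1]^*$), but not the UPFP.

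Two things are missing. First, $\|T\|$ should not enter the constant at all. Suppose $V\colon C(\Delta)\to C[0,1]$ has norm one, $R_FTV$ is bounded below by $c$, and its range is complemented in $C(F)$ by a projection $P$ of norm $\lambda$. Then $U=(R_FTV)^{-1}PR_F$ satisfies $UTV=I_{C(\Delta)}$ and $\|U\|\|V\|\le\lambda/c$. The size of $\|T\|$ only governs how finely the Cantor scheme must be chosen to make error terms small relative to $c$. What you must actually prove is that $c$ and $\lambda$ depend only on your lower bound.

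Second, your lower bound is not the hypothesis under which the paper extracts uniform estimates from Weis' proof. The paper uses a point $t_0$ of continuity of the upper semicontinuous oscillation $\omega_T$ with $\omega_T(t_0)\ge 1$. It obtains this point by observing that $\{\omega_T\ge1\}$ and $\{\omega_{I-T}\ge1\}$ are closed and cover $[0,1]$. The Baire category theorem then gives one of them non-empty interior, and a continuity point is chosen there. An uncountable separated subfamily of $\{T^*\delta_t\}$ does not provide such a point. For instance, take $Tf=f\circ\phi$ with $\phi$ the Cantor function. Then $\{T^*\delta_t\}=\{\delta_{\phi(t)}\}$ has an uncountable $2$-separated subfamily, yet $\omega_T=2\cdot\mathbf{1}_{\mathcal{C}}$, with $\mathcal{C}$ the Cantor set, vanishes at every continuity point. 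You would have to switch to the paper's Baire argument. Alternatively, restrict to the perfect set $P$ of condensation points and run Weis' argument for $R_PT$ with the oscillation relative to $P$, making that bridge explicit.
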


\begin{proof}
    For an operator $T\in \mathscr{B}(C[0,1])$, denote by $\omega_T(t)$ the oscillation of the representing kernel $(T^*\delta_t)_{t\in[0,1]}$, as in \cite{Weis1986}. We shall use that the oscillation is subadditive: if $T,S\in\mathscr{B}(C[0,1])$, then
    \begin{equation*}
        \omega_{T+S}(t)\leq \omega_T(t)+\omega_S(t)
    \end{equation*}
    for every $t\in[0,1]$. Thus, fixing an operator $T \in \mathscr{B}(C[0,1])$, we show that it satisfies the dichotomy for the UPFP.

    Since the representing kernel of $I_{C[0,1]}$ is $t\mapsto \delta_t$, and since $\norm{\delta_s-\delta_t}=2$ whenever $s\neq t$, we have $\omega_{I_{C[0,1]}}(t)=2$ for every $t\in[0,1]$. Hence
    \begin{equation*}
        2=\omega_{I_{C[0,1]}}(t)\leq \omega_T(t)+\omega_{I_{C[0,1]}-T}(t)
    \end{equation*}
    for every $t\in[0,1]$. The sets
    \begin{equation*}
        A=\{t\in[0,1]:\omega_T(t)\geq 1\},\qquad B=\{t\in[0,1]:\omega_{I_{C[0,1]}-T}(t)\geq 1\}
    \end{equation*}
    are closed since $\omega_T$ and $\omega_{I_{C[0,1]}-T}$ are upper semi-continuous, and they cover $[0,1]$ by the previous inequality. By the Baire category theorem, one of them has non-empty interior. Suppose first that $A$ has non-empty interior. Since  $\omega_T$ is an upper semi-continuous function, the set of its continuity points is a dense $G_\delta$-set, so that we may choose $t_0\in \operatorname{int}(A)$ at which $\omega_T$ is continuous. This implies, in particular, that $\omega_T(t_0)\geq 1$.

    We now follow the proof of \cite[Theorem 2]{Weis1986}. Denote by $\Delta$ the Cantor set. The estimates in that proof are quantitative: if $\omega_T(t_0)$ is bounded below at a point of continuity of $\omega_T$, then the identity on $C(\Delta)$ factors through $T$ with a constant depending only on this lower bound. Applying this and using $\omega_T(t_0)\geq 1$, we obtain operators $U$ and $V$ such that
    \begin{equation*}
        UTV=I_{C(\Delta)}
    \end{equation*}
    and $\norm{U}\norm{V}\leq C$, where $C$ is independent of $T$.

    We observe that, although Weis writes the proof for the real scalar field, the same argument applies over the complex scalar field. The measure-theoretic part uses only the Lebesgue decomposition of finite Radon measures, total variation and mutual singularity. At the point where positive and negative parts of signed measures are used, one instead applies the Jordan decomposition to the real and imaginary parts of the relevant complex measures. This changes, at most, the absolute constant.
    
    Since $C(\Delta)$ is isomorphic to $C[0,1]$, it follows that $I_{C[0,1]}$ factors through $T$ with constant bounded independently of $T$. If instead $B$ has non-empty interior, the same argument applied to $I_{C[0,1]}-T$ gives the corresponding factorisation through $I_{C[0,1]}-T$. Therefore, $C[0,1]$ has the UPFP.
\end{proof}

%% file: 3_Preliminary_results.tex
\section{Results for matrix operators}\label{sec: preliminary-lps}

\subsection{Reduction to upper triangular form.}

\subsubsection{The $C(\alpha)$ case} We begin by proving that, under suitable structural assumptions, one may pass to a subcopy of the basis on which the operator $T$ has upper triangular form. Since this argument works, without additional work, in a fairly general setting, we state the result at that level of generality. The underlying ideas are essentially those already present in \cite{Laustsen2001}, where the interplay between operators $S \colon X \to E$ and operators on the $E$-unconditional sum of $X$ is studied. Before stating the result, we require the following lemma.

\begin{lemma}\label{lmm: preliminary-upper-triangular}
    Let $E$ be a Banach space with a $1$-unconditional basis $(e_n)_{n \in \N}$, and let $X$ be a Banach space. Suppose that every operator $S \colon X \to E$ is compact. Then, for every operator
    \begin{equation*}
        T \colon X \to E(\N, X),
        \qquad
        T = (T_n)_{n \in \N},
    \end{equation*}
    we have 
    \begin{equation*}
        \lim_{n \to \infty} \norm{T_n} = 0.
    \end{equation*}
\end{lemma}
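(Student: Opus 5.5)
The plan is to argue by contradiction: assuming $\norm{T_n}\not\to 0$, I will build a non-compact operator $S\colon X\to E$. Suppose there exist $\delta>0$ and an infinite set $N_0\subseteq\N$ with $\norm{T_n}>\delta$ for all $n\in N_0$.

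\textbf{Step 1: constructing $S$.} For each $n\in N_0$, I pick a norm-one functional $\varphi_n\in X^*$ with $\norm{T_n^*\varphi_n}>\delta$. This is possible because $\norm{T_n^*}=\norm{T_n}$. Then for $x\in X$ I set
\begin{equation*}
  Sx=\sum_{n\in N}\varphi_n(T_nx)\,e_n ,
\end{equation*}
where $N\subseteq N_0$ is an infinite set to be chosen. The bound is $|\varphi_n(T_nx)|\leq\norm{T_nx}$, so by $1$-unconditionality (coordinatewise domination of coefficients is norm-decreasing) we get
\begin{equation*}
  \Bigl\|\sum_{n\in F}\varphi_n(T_nx)e_n\Bigr\|\leq\Bigl\|\sum_{n\in F}\norm{T_nx}e_n\Bigr\|\leq \norm{P_FTx}
\end{equation*}
for finite $F$. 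The tails are therefore Cauchy, since $\sum_n\norm{T_nx}e_n\in E$ converges as an unconditional expansion. Hence $S$ is a well-defined operator with $\norm{S}\leq\norm{T}$, and by hypothesis $S$ is compact.

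\textbf{Step 2: deriving the contradiction.} Compactness of $S$ means $S^*\colon E^*\to X^*$ is compact. Let $(e_n^*)$ denote the biorthogonal functionals, which have norm $\leq 1$ by $1$-unconditionality. Then $S^*e_n^*=T_n^*\varphi_n$, and these have norm $>\delta$ for $n\in N$. The main obstacle is that $(e_n^*)$ need not be weak$^*$-null in a way that forces the images to vanish: in the case $E=\ell_1$, for instance, $(e_n^*)$ is not weakly null.

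To get around this I will argue directly, using that $S$ maps $B_X$ into a relatively compact set $K\subseteq E$. For a compact subset $K$ of a space with a basis, the tails are uniformly small:
\begin{equation*}
  \sup_{y\in K}\Bigl\|\sum_{n\geq m}e_n^*(y)e_n\Bigr\|\to0 \quad\text{as } m\to\infty .
\end{equation*}
Consequently $\sup_{x\in B_X}|\varphi_n(T_nx)|\to 0$ as $n\to\infty$, $n\in N$, because $|e_n^*(y)|$ is dominated by the tail norm starting at $n$. But
\begin{equation*}
  \sup_{x\in B_X}|\varphi_n(T_nx)|=\norm{T_n^*\varphi_n}>\delta
\end{equation*}
for all $n\in N$, which is a contradiction. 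This already works with $N=N_0$, so no delicate subsequence selection is needed; the only real care is in the well-definedness estimate of Step 1 and the uniform tail estimate on compact sets, which follows by a standard $\varepsilon$-net argument.
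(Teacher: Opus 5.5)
Your proof is correct and is essentially the paper's argument: both build the operator $x\mapsto\sum_n\varphi_n(T_nx)e_n$ (the paper's $\widehat T=D\circ T$) and use its compactness to contradict the decay of basis coefficients. The only difference is that the paper extracts a norm-convergent subsequence of $(\widehat T x_k)_{k}$ and examines the coefficients of its limit, whereas you use the uniform smallness of basis tails on the compact set $\overline{S(B_X)}$; incidentally, the ``obstacle'' you raise is not real, since the coordinate functionals of a normalised basis are always weak$^*$-null and a compact adjoint maps bounded weak$^*$-null sequences to norm-null ones, so $\norm{S^*e_n^*}=\norm{T_n^*\varphi_n}\to 0$ would have finished Step~2 at once.
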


\begin{proof}
    Suppose, towards a contradiction, that the conclusion fails. Then there exist an operator
    \begin{equation*}
        T \colon X \to E(\N, X),
        \qquad
        T = (T_n)_{n \in \N},
    \end{equation*}
    and some $\varepsilon > 0$ such that, for every $N \in \N$, there exists $n > N$ with $\norm{T_n} \geq \varepsilon$.

    Hence, we may choose a strictly increasing sequence $(n_k)_{k \in \N}$ in $\N$ such that
    \begin{equation*}
        \norm{T_{n_k}} \geq \varepsilon
        \qquad \text{for all } k \in \N.
    \end{equation*}
    For each $k \in \N$, choose $x_k \in B_X$ such that $\norm{T_{n_k}x_k} > \varepsilon/2$.
    By the Hahn--Banach theorem, for each $k \in \N$ there exists $x_k^* \in B_{X^*}$ such that $ x_k^*(T_{n_k}x_k) > \varepsilon/2.$

    Define an operator $D \colon E(\N, X) \to E$ by
    \begin{equation*}
        D\bigl((x_n)_{n \in \N}\bigr) = \sum_{k=1}^\infty x_k^*(x_{n_k}) e_{n_k},
        \qquad (x_n)_{n \in \N} \in E(\N, X).
    \end{equation*}
    Since the basis $(e_n)_{n \in \N}$ is $1$-unconditional and $\norm{x_k^*} \leq 1$ for all $k \in \N$, it follows that $D$ is a bounded linear operator. Set $\widehat{T} = D \circ T \colon X \to E$.
    By assumption, the operator $\widehat{T}$ is compact, so the sequence $(\widehat {T} x_k)_{k \in \N}$ has a norm-convergent subsequence. Passing to a subsequence, we may suppose that $(\widehat T x_k)_{k \in \N}$ converges in norm to some $y \in E$.
    
    For each $k \in \N$, from the construction we have
        \begin{equation*}
            e_{n_k}^*(\widehat T x_k) = x_k^* (T_{n_k} x_k)
            >
            \frac{\varepsilon}{2}.
    \end{equation*}
    Since $\| e_{n_k}^* \|=1$ for all $k \in \N$, it follows that
    \begin{equation*}
            \bigl| e_{n_k}^*(y) \bigr|
            \geq
            \bigl| e_{n_k}^*(\widehat T x_k) \bigr|
            -
             \| \widehat T x_k-y \|
            >
            \frac{\varepsilon}{2} - \| \widehat T x_k-y \|.
        \end{equation*}
    Since $\widehat T x_k \to y$ in norm, it follows that for all sufficiently large $k$ we obtain $| e_{n_k}^*(y) | > \varepsilon/4$. This is impossible, because the coefficients of $y$ with respect to the Schauder basis $(e_n)_{n\in\N}$ must tend to zero. This contradiction completes the proof.
\end{proof}

As an immediate consequence, we obtain the following.

\begin{corollary}[Strong forward reduction]\label{cor: preliminary-upper-triangular}
    Let $E$ be a Banach space with a $1$-unconditional basis $(e_n)_{n \in \N}$, and let $X$ be a Banach space. Suppose that every operator $S \colon X \to E$ is compact. Then, for every operator
    \begin{equation*}
        T \colon X \to E(\N, X),
        \qquad
        T = (T_n)_{n \in \N},
    \end{equation*}
    and every $\varepsilon > 0$, there exists an infinite subset $M \subseteq \N$ such that
    \begin{equation*}
        \sum_{n \in M} \norm{T_n} < \varepsilon.
    \end{equation*}
\end{corollary}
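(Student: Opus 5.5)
The corollary follows directly from \Cref{lmm: preliminary-upper-triangular}. Fix an operator $T = (T_n)_{n \in \N} \colon X \to E(\N, X)$ and $\varepsilon > 0$. By the lemma, the hypothesis that every operator $S \colon X \to E$ is compact gives $\lim_{n \to \infty} \norm{T_n} = 0$.

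We build $M$ inductively. Choose $n_1 \in \N$ with $\norm{T_{n_1}} < \varepsilon/4$. Having chosen $n_1 < \dots < n_k$, use $\norm{T_n} \to 0$ to pick $n_{k+1} > n_k$ with $\norm{T_{n_{k+1}}} < \varepsilon/2^{k+2}$. Set $M = \{n_k : k \in \N\}$. This set is infinite because the sequence $(n_k)$ is strictly increasing. Moreover,
\begin{equation*}
    \sum_{n \in M} \norm{T_n} = \sum_{k=1}^\infty \norm{T_{n_k}} < \sum_{k=1}^\infty \frac{\varepsilon}{2^{k+1}} = \frac{\varepsilon}{2} < \varepsilon.
\end{equation*}

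There is no real obstacle here. All the substance sits in \Cref{lmm: preliminary-upper-triangular}, which converts the compactness hypothesis into norm convergence of the coordinates. The corollary only uses the elementary fact that a null sequence of nonnegative reals has a summable subsequence with arbitrarily small sum.
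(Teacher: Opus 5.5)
Your proposal is correct and matches the paper's approach: the paper states this corollary as an immediate consequence of \Cref{lmm: preliminary-upper-triangular} and gives no separate proof. Your argument, which passes to a subsequence along which $\norm{T_n}$ decays geometrically, is exactly the routine step the paper leaves implicit.
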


We now obtain the following.

\begin{lemma}\label{lmm: lemma-upper-triangular-1}
    Let $E$ be a Banach space with a $1$-unconditional basis $(e_n)_{n \in \N}$, and let $X$ be a Banach space. Suppose that every operator $S \colon X \to E$ is compact. Then, for every operator
    \begin{equation*}
        T \colon E(\N, X) \to E(\N, X),
        \qquad
        T = (T_{n,m})_{n,m \in \N},
    \end{equation*}
    and every $\varepsilon > 0$, there exists an infinite subset $M = \{m_j: j \in \N \} \subseteq \N$ such that
    \begin{equation*}
        \sum_{\substack{i > j}} \norm{T_{m_i,m_j}} < \varepsilon/2^{j}
        \qquad \text{for all } j \in \N.
    \end{equation*}
\end{lemma}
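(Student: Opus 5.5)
We build $M=\{m_1<m_2<\dots\}$ inductively, applying \Cref{cor: preliminary-upper-triangular} to the columns of $T$. For each $m\in\N$ the column $TJ_m\colon X\to E(\N,X)$ has coordinates $(T_{n,m})_{n\in\N}$. It is an operator from $X$ into $E(\N,X)$, so both \Cref{lmm: preliminary-upper-triangular} and \Cref{cor: preliminary-upper-triangular} apply to it. The corollary's proof actually gives more than stated: since $\norm{T_{n,m}}\to0$ as $n\to\infty$, any infinite set can be thinned to an infinite subset on which $\sum_n\norm{T_{n,m}}$ is as small as we like. We will use it in this form, applied inside a prescribed infinite set, which only requires passing to a subsequence of the null sequence $(\norm{T_{n,m}})_n$.

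Construction. Set $N_0=\N$ and $m_1=\min N_0$. Suppose $m_1<\dots<m_j$ and an infinite set $N_{j-1}\subseteq\N$ have been chosen, with $m_j\in N_{j-1}$. Apply the thinning to the column $TJ_{m_j}$ inside the infinite set $N_{j-1}\setminus\{1,\dots,m_j\}$. This gives an infinite set
\begin{equation*}
N_j\subseteq N_{j-1}\cap(m_j,\infty)
\qquad\text{with}\qquad
\sum_{n\in N_j}\norm{T_{n,m_j}}<\varepsilon/2^{j}.
\end{equation*}
Then let $m_{j+1}=\min N_j$. The sets $N_j$ are nested and decreasing, and $m_i\in N_{i-1}\subseteq N_j$ whenever $i>j$.

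Verification. Fix $j$. Every $m_i$ with $i>j$ lies in $N_j$, and the $m_i$ are distinct. Hence
\begin{equation*}
\sum_{i>j}\norm{T_{m_i,m_j}}\le\sum_{n\in N_j}\norm{T_{n,m_j}}<\varepsilon/2^{j},
\end{equation*}
which is the required estimate, and $M=\{m_j:j\in\N\}$ is infinite by construction.

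The only point needing care is the use of \Cref{cor: preliminary-upper-triangular} inside a given infinite set rather than inside all of $\N$. This is justified because \Cref{lmm: preliminary-upper-triangular} gives $\norm{T_{n,m_j}}\to0$, so the sequence restricted to any infinite set is still null. We can therefore pick $n_1<n_2<\cdots$ in $N_{j-1}\cap(m_j,\infty)$ with $\norm{T_{n_k,m_j}}<\varepsilon 2^{-j-k-1}$ and let $N_j=\{n_k:k\in\N\}$. Nothing else is delicate: the diagonal-type induction handles the countably many columns, and the geometric choice of tolerances makes the sums converge.
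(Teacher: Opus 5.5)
Your proof is correct and follows the same route as the paper. Both build $M$ by recursively applying the strong forward reduction to the successive columns $TJ_{m_j}$ inside a nested sequence of infinite row sets, and both choose $m_{j+1}$ from the current set so that every later $m_i$ lies in $N_j$. Your justification of the relativised corollary, via $\norm{T_{n,m_j}}\to 0$ from \Cref{lmm: preliminary-upper-triangular}, just makes explicit what the paper means by applying the corollary to the column restricted to the rows indexed by $M_k$.
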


\begin{proof}
    We argue by recursively applying \Cref{cor: preliminary-upper-triangular} to the columns of $T$. Choose $m_1 = 1$. Applying \Cref{cor: preliminary-upper-triangular} to the $m_1$th column of $T$, i.e. to $TJ_{m_1}$, we obtain an infinite subset $M_1 \subseteq \N$ such that
    \begin{equation*}
        \sum_{\substack{i \in M_1 \\ i > m_1}} \norm{T_{i,m_1}} < \varepsilon/2.
    \end{equation*}

    Suppose now that we have constructed $1 = m_1 < \dots < m_k$ and infinite sets
    \begin{equation*}
        \N = M_0 \supseteq M_1 \supseteq \dots \supseteq M_k
    \end{equation*}
    such that $m_j \in M_{j-1}$ and
    \begin{equation*}
        \sum_{\substack{i \in M_j \\ i > m_j}} \norm{T_{i,m_j}} < \varepsilon/2^{j}
    \end{equation*}
    for each $j \leq k$. Choose any $m_{k+1} \in M_k$ with $m_{k+1} > m_k$. Applying \Cref{cor: preliminary-upper-triangular} to the $m_{k+1}$th column of $T$, restricted to the rows indexed by $M_k$, we obtain an infinite set $M_{k+1} \subseteq M_k$ such that
    \begin{equation*}
        \sum_{\substack{i \in M_{k+1} \\ i > m_{k+1}}} \norm{T_{i,m_{k+1}}} < \varepsilon/2^{k+1}.
    \end{equation*}
    This completes the inductive construction. Set
    \begin{equation*}
        M = \{m_k : k \in \N\}.
    \end{equation*}
    Since $M \setminus \{m_1,\dots,m_j\} \subseteq M_j$ for each $j \in \N$, it follows that
    \begin{equation*}
        \sum_{i > j} \norm{T_{m_i,m_j}}
        \leq
        \sum_{\substack{i \in M_j \\ i > m_j}} \norm{T_{i,m_j}}
        < \varepsilon/2^{j}
    \end{equation*}
    for every $j \in \N$. This proves the result.
\end{proof}

\begin{remark}
    If one assumes instead that every operator $S \colon E \to X$ is compact, then one obtains a similar result, namely, a reduction to lower triangular form, by following the ideas developed by Laustsen in \cite{Laustsen2001}. Since this will not be needed for our purposes, we do not make this explicit here.
\end{remark}

We now get the claimed upper triangular form.

\begin{proposition}[Reduction to upper-triangular operators]\label{prop: prop-upper-triangular}
    Let $E$ be a Banach space with a $1$-unconditional basis $(e_n)_{n \in \N}$, and let $X$ be a Banach space. Suppose that every operator $S \colon X \to E$ is compact. Then, for every operator
    \begin{equation*}
        T \colon E(\N, X) \to E(\N, X),
        \qquad
        T = (T_{n,m})_{n,m \in \N},
    \end{equation*}
    and every $\varepsilon > 0$, there exists an infinite subset $M = \{m_j : j \in \N\} \subseteq \N$ such that
    \begin{equation*}
        \norm{P_M T J_M - U} < \varepsilon,
    \end{equation*}
    where $U \colon E(M, X) \to E(M, X)$ is the upper triangular operator defined by
    \begin{equation*}
        U_{m_i,m_j} =
        \begin{cases}
            T_{m_i,m_j}, & \text{if } i \leq j,\\
            0, & \text{if } i > j.
        \end{cases}
    \end{equation*}
\end{proposition}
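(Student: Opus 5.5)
We apply \Cref{lmm: lemma-upper-triangular-1} to $T$ with the given $\varepsilon$. This gives an infinite set $M=\{m_j : j\in\N\}$, enumerated increasingly, such that
\begin{equation*}
    \sum_{i>j}\norm{T_{m_i,m_j}}<\varepsilon/2^j \qquad\text{for all } j\in\N.
\end{equation*}
We then check that the strictly lower triangular part $L = P_MTJ_M - U$ is small in norm. Here $L$ is the formal matrix with entries $L_{m_i,m_j}=T_{m_i,m_j}$ for $i>j$ and $0$ otherwise.

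Write $L$ column by column. For each $j$, let $C_j\colon X\to E(M,X)$ be the operator whose $m_i$-th coordinate is $T_{m_i,m_j}$ for $i>j$ and $0$ for $i\le j$. By the triangle inequality in $E(M,X)$ and $1$-unconditionality (each coordinate vector has norm equal to the norm of its entry),
\begin{equation*}
    \norm{C_j}\le\sum_{i>j}\norm{T_{m_i,m_j}}<\varepsilon/2^j .
\end{equation*}
Since the basis is $1$-unconditional, $\norm{x_{m_j}}\le\norm{x}$ for every $x\in E(M,X)$. Hence the series $\sum_j C_j x_{m_j}$ converges absolutely, and
\begin{equation*}
    \Bigl\|\sum_j C_j x_{m_j}\Bigr\|\le\sum_j\frac{\varepsilon}{2^j}\norm{x}= \varepsilon\norm{x}.
\end{equation*}
So $\widetilde L := \sum_j C_j P_{m_j}$ is a bounded operator on $E(M,X)$ with $\norm{\widetilde L}\le\varepsilon$, and its matrix is exactly the strictly lower part of $(T_{m_i,m_j})$. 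The strict inequality $\norm{\widetilde L}<\varepsilon$ is obtained by running the argument with $\varepsilon/2$ in place of $\varepsilon$.

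Next we show $U$ is a genuine bounded operator and $P_MTJ_M=U+\widetilde L$. Define $U := P_MTJ_M-\widetilde L$, which is bounded. Its matrix entries are $P_{m_i}(P_MTJ_M)J_{m_j}-P_{m_i}\widetilde LJ_{m_j}$, which equal $T_{m_i,m_j}$ for $i\le j$ and $0$ for $i>j$. So $U$ is the upper triangular operator described in the statement, and $\norm{P_MTJ_M-U}=\norm{\widetilde L}<\varepsilon$.

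The only delicate point is making sense of $U$ as an operator defined by its matrix. The clean way is the one above: define $U$ as $P_MTJ_M-\widetilde L$ rather than summing the upper part directly, whose convergence we have no direct control over. Here we use that $E(M,X)$ is an unconditional sum, so that an operator on it is determined by its matrix: two operators with equal matrices agree on finitely supported vectors, hence everywhere when these are dense. If $(e_n)$ is not assumed shrinking or boundedly complete, density of finitely supported vectors still holds because $(e_n)$ is a basis.
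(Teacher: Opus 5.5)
Your proof is correct and is essentially the paper's argument. Both apply \Cref{lmm: lemma-upper-triangular-1} and bound the strictly lower part by $\sum_j\sum_{i>j}\norm{T_{m_i,m_j}}<\varepsilon$. The paper sums row by row and then re-indexes, while you sum column by column (as the paper itself does in \Cref{lmm:lp-c0-upper-reduction}) and define $U=P_MTJ_M-\widetilde L$, which handles the well-definedness of $U$ slightly more cleanly. The rerun with $\varepsilon/2$ is unnecessary, since $\norm{\widetilde L}\le\sum_j\norm{C_j}<\sum_j\varepsilon/2^j=\varepsilon$ already gives the strict inequality.
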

\begin{proof}
    By \Cref{lmm: lemma-upper-triangular-1}, we may choose an infinite subset $M = \{m_j : j \in \N\} \subseteq \N$ such that
    \begin{equation*}
        \sum_{i > j} \norm{T_{m_i,m_j}} < \varepsilon/2^j
        \qquad \text{for all } j \in \N.
    \end{equation*}
    
    Define $U \colon E(M, X) \to E(M, X)$ as in the statement. It is clear that $U$ defines a linear map, although it need not, a priori, be bounded. The proof that $\norm{P_M T J_M - U} < \varepsilon$ shows in particular that $U$ is bounded, with norm at most $\norm{T} + \varepsilon$ and hence an operator.
    
    Let $x = (x_{m_j})_{j \in \N} \in E(M,X)$ with $\norm{x} \leq 1$. In particular, by $1$-unconditionality, we have $\norm{x_{m_j}} \leq 1$ for all $j \in \N$. It follows that
    \begin{equation*}
        \norm{((P_M T J_M - U)x)_{m_i}} \leq \sum_{j < i} \norm{T_{m_i,m_j}} \norm{x_{m_j}} \leq \sum_{j < i} \norm{T_{m_i,m_j}},
    \end{equation*}
    and hence, again by $1$-unconditionality,
    \begin{equation*}
        \norm{(P_M T J_M - U)x}
        \leq
        \sum_{i=1}^{\infty} \sum_{j < i} \norm{T_{m_i,m_j}}
        =
        \sum_{j=1}^{\infty} \sum_{i > j} \norm{T_{m_i,m_j}}
        <
        \sum_{j=1}^{\infty} \varepsilon/2^j
        =
        \varepsilon,
    \end{equation*}
    where the equality follows by re-indexing the non-negative double series. Therefore, $\norm{P_M T J_M - U} < \varepsilon$.
\end{proof}

Finally, we note that these results apply to $\ell_p$-sums of $C(\alpha)$ spaces whenever $1 \leq p < \infty$. The following is well known; we include a proof for completeness.

\begin{proposition}\label{prop: alpha-to-ellp-is-compact}
    Let $\alpha$ be an ordinal and let $1 \leq p < \infty$. Then every operator $S \colon C(\alpha) \to \ell_p$ is compact.
\end{proposition}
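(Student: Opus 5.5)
The plan is to show that $C(\alpha)$ has the Dunford--Pettis property and contains no copy of $\ell_1$, and then to combine these two facts. The point is that a weakly compact operator on a space with the Dunford--Pettis property sends weakly null sequences to norm null sequences. If in addition every bounded sequence has a weakly Cauchy subsequence, then the operator is compact.

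First, I would observe that $\ell_p$ is reflexive for $1<p<\infty$. Hence every operator $S\colon C(\alpha)\to\ell_p$ is weakly compact. Since $C(K)$-spaces have the Dunford--Pettis property (Grothendieck), $S$ is completely continuous. For $p=1$ I would argue via Pitt-type or Schur reasoning instead. Every operator $S\colon C(\alpha)\to\ell_1$ is weakly compact: otherwise, by Pełczyński's property (V) for $C(K)$-spaces, $S$ would fix a copy of $c_0$, which is impossible in $\ell_1$. So again $S$ is completely continuous. Alternatively, for $p=1$ one can note that $\ell_1$ has the Schur property, so weakly convergent sequences in $\ell_1$ converge in norm.

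Second, I would use that $C(\alpha)$ contains no isomorph of $\ell_1$. Its dual is $\ell_1([0,\alpha])$, because every regular Borel measure on the scattered compact space $[0,\alpha]$ is purely atomic. A space containing $\ell_1$ has a dual containing a copy of $L_1$ (or of $C[0,1]^*$), and that cannot occur inside $\ell_1(\Gamma)$, which has the Schur property and the Radon--Nikodým property. I expect this to be the main point requiring care: for uncountable $\alpha$ one cannot argue via separability of the dual. The cleanest route is Pełczyński's theorem that $X$ contains $\ell_1$ if and only if $X^*$ contains $L_1$, combined with the fact that $L_1$ does not embed into $\ell_1(\Gamma)$ (every subspace of $\ell_1(\Gamma)$ is Schur, while $L_1$ is not). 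By Rosenthal's $\ell_1$ theorem, every bounded sequence $(x_k)$ in $C(\alpha)$ then has a weakly Cauchy subsequence $(x_{k_j})$.

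Finally, I would fix a bounded sequence $(x_k)$ and pass to a weakly Cauchy subsequence. The differences $x_{k_{j+1}}-x_{k_j}$ need not be weakly null, so I would argue as follows. If $(Sx_{k_j})$ were not norm Cauchy, there would exist $\delta>0$ and indices $a_i<b_i$ increasing with $\norm{S(x_{k_{b_i}}-x_{k_{a_i}})}\geq\delta$. The sequence $(x_{k_{b_i}}-x_{k_{a_i}})_i$ is weakly null, since both terms converge weak$^*$ to the same element of the bidual. Complete continuity of $S$ then gives a contradiction. Hence $(Sx_{k_j})$ converges, and $S$ is compact.
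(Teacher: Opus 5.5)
Your argument is correct, but it takes a different route from the paper.

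The paper gets weak compactness uniformly in $p$: Pe\l czy\'nski's property (V) for $C(K)$-spaces, combined with Pitt's theorem, shows that no operator into $\ell_p$ fixes a copy of $c_0$. It then gets compactness in one stroke from the Schur property of $C(\alpha)^*=\ell_1(\alpha)$. The adjoint $S^*\colon \ell_p^*\to\ell_1(\alpha)$ of a weakly compact $S$ is weakly compact into a Schur space, hence compact, so $S$ is compact.

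Your first step is cheaper. For $1<p<\infty$ you use reflexivity, and for $p=1$ your Schur remark gives complete continuity with no further input. Your second step replaces the dual Schur argument by the chain ``Dunford--Pettis property, no copy of $\ell_1$ in $C(\alpha)$, Rosenthal's $\ell_1$ theorem, weakly Cauchy to norm Cauchy''. Each link in that chain is sound.

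Together, the Dunford--Pettis property and the absence of $\ell_1$ are exactly the standard characterisation of spaces with Schur dual. So your second half is in effect an unpacked proof of the fact the paper uses directly. The cost is heavier tools: Rosenthal's theorem, and the result that $X\supseteq\ell_1$ forces $X^*\supseteq L_1$. For nonseparable $X$, such as $C(\alpha)$ with $\alpha$ uncountable, that direction is Hagler's extension of Pe\l czy\'nski's separable theorem. Alternatively, every separable subspace of $C(\alpha)$ sits isometrically inside $C(L)$ for some countable compact metric continuous image $L$ of $[0,\alpha]$, and $C(L)^*$ is separable.

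You had already identified $C(\alpha)^*$ with $\ell_1([0,\alpha])$ and noted that it is Schur. So right after the weak compactness step you could have finished through $S^*$, as the paper does, and dropped the Dunford--Pettis, Rosenthal and Hagler ingredients altogether.
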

\begin{proof}
  By a classical theorem of Pe{\l}czy\'nski \cite{pelczynski1965strictly}, every operator from a $C(K)$-space into a Banach space either fixes a copy of $c_0$ or is weakly compact. Since $C(\alpha)^* = \ell_1(\alpha)$ and $\ell_1(\alpha)$ has the Schur property, every weakly compact operator from $C(\alpha)$ is compact. It follows that every operator $S \colon C(\alpha) \to \ell_p$ either fixes a copy of $c_0$ or is compact. By Pitt's theorem, every operator from $c_0$ to $\ell_p$ is compact, and thus no operator $S \colon C(\alpha) \to \ell_p$ can fix a copy of $c_0$. Consequently, every operator from $C(\alpha)$ to $\ell_p$ is compact.
\end{proof}

It is worth noting that these arguments also apply to $\ell_p$-sums of $C[0,1]$ for $1 \leq p < 2$.

\begin{proposition}\label{prop: compact-fromC01}
    Let $1 \leq p < 2$. Then every operator $S\colon C[0,1] \to \ell_p$ is compact.
\end{proposition}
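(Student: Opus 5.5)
We prove that every operator $S\colon C[0,1]\to \ell_p$, $1\le p<2$, is compact by passing through type and cotype. The key point is that $\ell_p$ with $1\le p<2$ has cotype $2$. We will show that any operator from a $C(K)$-space into a space of cotype $2$ factors through a Hilbert space, and then that the relevant factors force compactness.

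\textbf{Step 1: factorisation through Hilbert space.} We use the classical fact (Grothendieck's theorem in Maurey's form) that every operator from a $C(K)$-space into a Banach space of cotype $2$ is $2$-summing. By Pietsch factorisation, a $2$-summing operator $S\colon C[0,1]\to \ell_p$ factors as
\begin{equation*}
    C[0,1]\xrightarrow{\;j\;} L_2(\mu)\xrightarrow{\;\widetilde S\;}\ell_p,
\end{equation*}
where $\mu$ is a probability measure on $[0,1]$ and $j$ is the formal identity.

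\textbf{Step 2: compactness of $\widetilde S$.} It suffices to show that $\widetilde S$, restricted to the closure of $j(C[0,1])$, is compact. Any operator from a Hilbert space into $\ell_p$ with $p<2$ is compact. Indeed, if it were not compact, some weakly null normalized sequence in the Hilbert space would be mapped to a sequence bounded away from zero. After a perturbation and passing to a subsequence, its image is equivalent to a block basis of $\ell_p$, hence dominates the unit vector basis of $\ell_p$. It is also dominated by the unit vector basis of $\ell_2$, because the images of an orthonormal sequence are $\ell_2$-upper estimated. This would give $\ell_2$-domination of $\ell_p$-sums with $p<2$, which is a contradiction. This is just Pitt's theorem for $\ell_2\to\ell_p$, $p<2$. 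Hence $\widetilde S$ is compact, and so $S=\widetilde S\circ j$ is compact.

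\textbf{Alternative route.} One could instead repeat the pattern of \Cref{prop: alpha-to-ellp-is-compact}. By Pe{\l}czy\'nski's theorem, a non-weakly-compact $S$ fixes a copy of $c_0$, which is impossible by Pitt's theorem. So $S$ is weakly compact, and weakly compact operators on $C(K)$ are completely continuous (the Dunford--Pettis property). Since $C[0,1]$ contains $\ell_2$-like sequences (e.g.\ Rademacher-type functions are not weakly null in a useful way), this route does not immediately give compactness. That is why we prefer the $2$-summing route, which handles it directly.

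\textbf{Main obstacle.} The substantive step is Step 1: invoking that operators from $C(K)$ into cotype-$2$ spaces are $2$-summing. This is a deep classical theorem, and we will simply cite it (Maurey, or Grothendieck for $p=1$). Note that the hypothesis $p<2$ is used twice: for cotype $2$ in Step 1, and for Pitt's theorem in Step 2. For $p>2$ the statement indeed fails, as can be seen by mapping onto the span of a Rademacher-like sequence.
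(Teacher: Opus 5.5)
Your proof is correct and follows the paper's route: factor $S$ through a Hilbert space, then apply Pitt's theorem for operators from a Hilbert space into $\ell_p$, $p<2$. The only difference is how the factorisation is obtained: you use Maurey's theorem (operators from $C(K)$ into cotype-$2$ spaces are $2$-summing) together with Pietsch factorisation, whereas the paper invokes Pisier's abstract Grothendieck theorem, using that $C[0,1]^*$ and $\ell_p$ have cotype $2$ and that $C[0,1]$ has the approximation property.
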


\begin{proof}
    This is a standard consequence of classical factorisation theorems. Indeed, $C[0,1]^*$ is an $L_1$-space, and hence has cotype $2$. Moreover, $\ell_p$ has cotype $2$ for $1 \leq p \leq 2$, while $C[0,1]$ has the approximation property. Therefore, by Pisier's abstract Grothendieck theorem, see for instance \cite[Theorem 8.1.8]{albiac2006topics}, every operator from $C[0,1]$ to $\ell_p$ factors through a Hilbert space. Since every operator from a Hilbert space into $\ell_p$, $1\leq p<2$, is compact by Pitt's theorem, the result follows.
\end{proof}

The previous proposition cannot be extended to $p\geq 2$, since in that range there are non-compact operators from $C[0,1]$ to $\ell_p$.

\subsubsection{The $C[0,1]$ case.}\label{subsec: C01forward} We now explain how to obtain the upper triangular reduction in this setting. The key additional ingredient is the isomorphism $c_0(C[0,1]) \simeq C[0,1]$, which provides additional room for the gliding-hump argument along $c_0$. Readers familiar with the proof of primariness of $L_p$ by Alspach, Enflo and Odell \cite{AEO1977} may recognise a related device: there, the isomorphism $L_p \simeq \ell_2(L_p)$, together with additional properties of the Haar basis, is used to create room along auxiliary $\ell_2$-coordinates.

\begin{lemma}[Forward reduction]\label{lmm:c0-lp-coordinate-smallness}
    Let $1\leq p<\infty$, let $X$ and $Y$ be Banach spaces and let $T\colon c_0(X) \to \ell_p(Y)$ be an operator. Then, for every $\varepsilon>0$, there exist $n\in\N$ and an infinite set $M\subseteq\N$ such that $\norm{P_M T J_n}<\varepsilon$.
\end{lemma}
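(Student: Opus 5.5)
We argue by contradiction and use a sign-averaging (gliding hump) argument. The $c_0$-structure of the domain lets us put arbitrarily many unit vectors in distinct coordinates at no cost in norm. The $\ell_p$-structure of the range, with $p<\infty$, forces the images of such vectors, if they carry disjoint humps of size bounded below, to have norm tending to infinity.

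Suppose the statement fails for some $\varepsilon>0$. Then for every $n\in\N$ and every infinite $M\subseteq\N$ we have $\norm{P_MTJ_n}\geq\varepsilon$. We only use this for the tails $M=(N,\infty)\cap\N$: for all $n,N\in\N$,
\begin{equation*}
    \norm{P_{(N,\infty)}TJ_n}\geq \varepsilon .
\end{equation*}
We build, recursively, integers $0=N_0<N_1<N_2<\cdots$ and vectors $x_k\in B_X$ as follows. Given $N_{k-1}$, pick $x_k\in B_X$ with $\norm{P_{(N_{k-1},\infty)}TJ_kx_k}>\varepsilon/2$. Since $TJ_kx_k\in\ell_p(Y)$ and $p<\infty$, its tails tend to zero in norm, so we may choose $N_k>N_{k-1}$ with $\norm{P_{(N_k,\infty)}TJ_kx_k}<\varepsilon/4$. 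Setting $B_k=(N_{k-1},N_k]$, we obtain pairwise disjoint finite blocks with
\begin{equation*}
    a_k:=P_{B_k}TJ_kx_k,\qquad \norm{a_k}\geq \varepsilon/4 .
\end{equation*}

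Now fix $K\in\N$ and signs $\theta=(\theta_k)_{k=1}^K\in\{\pm1\}^K$, and put $z_\theta=\sum_{k=1}^K\theta_kJ_kx_k\in c_0(X)$. Since the $x_k$ sit in distinct coordinates of $c_0(X)$, we have $\norm{z_\theta}\leq1$, hence $\norm{Tz_\theta}\leq\norm{T}$. We do not try to control the interference terms $P_{B_k}TJ_jx_j$ for $j\neq k$; instead we average over the signs. For fixed $k$, write $P_{B_k}Tz_\theta=\theta_ka_k+w$, where $w$ does not depend on $\theta_k$. The triangle inequality gives
\begin{equation*}
    \tfrac12\bigl(\norm{a_k+w}+\norm{-a_k+w}\bigr)\geq\norm{a_k},
\end{equation*}
and by convexity of $t\mapsto t^p$ the same holds for $p$-th powers. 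Averaging over all $\theta$ (the expectation $\mathbb{E}$ being over independent uniform signs), we get $\mathbb{E}\norm{P_{B_k}Tz_\theta}^p\geq\norm{a_k}^p$. Since the blocks are disjoint and the range norm is the $\ell_p$-norm,
\begin{equation*}
    \norm{T}^p\geq\mathbb{E}\norm{Tz_\theta}^p\geq\sum_{k=1}^K\mathbb{E}\norm{P_{B_k}Tz_\theta}^p\geq K(\varepsilon/4)^p .
\end{equation*}
Letting $K\to\infty$ gives a contradiction.

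We expect the main obstacle to be the interference of later columns $TJ_jx_j$, $j>k$, on earlier blocks $B_k$. These cannot be made small, since they are fixed before $x_j$ is chosen, and this is exactly what would break a naive gliding-hump argument. The sign-averaging step is designed to eliminate this issue entirely, so that only disjointness of the humps and the tail estimates are needed. In fact the argument yields the statement with $M$ a tail $(N,\infty)\cap\N$.
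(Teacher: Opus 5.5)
Your proof is correct. Its engine is the same as the paper's: argue by contradiction, place vectors of $B_X$ in distinct coordinates of $c_0(X)$ at no cost in norm, and average over signs so that cross terms are neutralised while the $p$-th powers of the norms add up over disjoint pieces of the target.

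The difference is in how those disjoint pieces are obtained. The paper partitions $\N$ into $r$ infinite sets $M_1,\dots,M_r$ and applies the negated hypothesis to each pair $(s,M_s)$. This makes the target pieces $P_{M_s}Tx_\theta$ exactly disjoint with no truncation. It then handles the cross terms with norming functionals $x_s^*$, Jensen's inequality and $\mathbb{E}[\theta_s\theta_t]=\delta_{st}$.

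You use the hypothesis only on tails $(N,\infty)\cap\N$. You manufacture disjoint finite blocks $B_k$ by a gliding hump, using that each single vector $TJ_kx_k\in\ell_p(Y)$ has small tails. Your conditional estimate $\tfrac12\bigl(\|a_k+w\|^p+\|a_k-w\|^p\bigr)\geq\|a_k\|^p$ is the vector-valued counterpart of the paper's functional-plus-Jensen step, and it correctly disposes of the interference you flag.

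The paper's route is a little shorter. Yours gives a formally stronger conclusion: $M$ may be taken to be a tail, i.e.\ $\|P_{(N,\infty)}TJ_n\|<\varepsilon$ for some $n,N$. In the later application, \Cref{lmm:lp-c0-upper-reduction}, either version suffices, since there one only needs some infinite $M_j\subseteq N_j$, and a tail of $N_j$ qualifies.
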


\begin{proof}
    We proceed by contradiction and assume that the statement is false. Then there is $\varepsilon>0$ such that, for every $n\in\N$ and every infinite set $M\subseteq\N$, we have $\norm{P_MTJ_n}\geq 2\varepsilon$. Choose $r\in\N$ such that $r\varepsilon^p>\norm{T}^p$, and partition $\N$ into infinite pairwise disjoint sets $M_1,\dots,M_r$.

    By assumption, for each $s=1,\dots,r$ we have $\norm{P_{M_s}TJ_s}\geq 2\varepsilon$. Hence, we may choose $x_s\in B_X$ and $x^*_s\in B_{\ell_p(M_s,Y)^*}$ such that $|x^*_sP_{M_s}TJ_sx_s|>\varepsilon$.

    For each choice of signs $\theta = (\theta_1,\dots,\theta_r ) \in \{-1,1\}^r$, set
    \begin{equation*}
        x_\theta=\sum_{s=1}^r \theta_s J_sx_s.
    \end{equation*}
    Since the vectors $J_sx_s$ have disjoint supports in $c_0(X)$, we have $\norm{x_\theta}\leq 1$. Thus $\norm{Tx_\theta}^p\leq \norm{T}^p$ for every choice of signs, and averaging gives
    \begin{equation*}
        \norm{T}^p \geq \mathbb E_\theta [\norm{Tx_\theta}^p].
    \end{equation*}
    Since the sets $M_1,\dots,M_r$ form a partition of $\N$, we have
    \begin{equation*}
        \mathbb E_\theta [\norm{Tx_\theta}^p] = \sum_{s=1}^r \mathbb E_\theta [ \norm{P_{M_s}Tx_\theta}^p] \geq \sum_{s=1}^r \mathbb E_\theta [|x_s^*P_{M_s}Tx_\theta|^p].
    \end{equation*}
    For each fixed $s$, we have
    \begin{equation*}
        \mathbb E_\theta [|x_s^*P_{M_s}Tx_\theta|^p] = \mathbb E_\theta [|\theta_s x_s^*P_{M_s}Tx_\theta|^p] \geq \left|\mathbb E_\theta [\theta_s x_s^*P_{M_s}Tx_\theta ]\right|^p,
    \end{equation*}
    where the first equality follows from $|\theta_s|=1$ and the inequality from Jensen's inequality applied to the convex function $z\mapsto |z|^p$. Moreover,
    \begin{equation*}
        \mathbb E_\theta [\theta_s x_s^*P_{M_s}Tx_\theta] = \sum_{t=1}^r \mathbb E_\theta[\theta_s\theta_t]x_s^*P_{M_s}TJ_tx_t = x_s^*P_{M_s}TJ_sx_s,
    \end{equation*}
    because $\mathbb E_\theta[\theta_s\theta_t]=0$ for $s\neq t$, while $\mathbb E_\theta[\theta_s^2]=1$. Hence
    \begin{equation*}
        \mathbb E_\theta [|x_s^*P_{M_s}Tx_\theta|^p] \geq |x_s^*P_{M_s}TJ_sx_s|^p > \varepsilon^p.
    \end{equation*}
    Therefore
    \begin{equation*}
        \norm{T}^p \geq \mathbb E_\theta [\norm{Tx_\theta}^p] > r\varepsilon^p,
    \end{equation*}
    contradicting the choice of $r$. This contradiction proves the result.
\end{proof}

\begin{remark}\label{rem: upper-triangular-form}
    Observe that the previous result also applies to $E$-sums, that is, to operators $T\colon c_0(X)\to E(Y)$, provided that $E$ has an unconditional basis and no block sequence of the basis of $E$ spans a copy of $c_0$. Indeed, the same proof applies after splitting $\N$ into countably many infinite sets $(M_s)_{s\in\N}$; failure of the conclusion would produce, after truncating and passing to a subsequence, a block sequence equivalent to the canonical basis of $c_0$.
\end{remark}

We shall use the following notation. For each $n\in\N$, we denote by
\begin{equation*}
    \widehat{J}_n\colon c_0(X)\to \ell_p(c_0(X))
\end{equation*}
the canonical isometric embedding into the $n$-th outer coordinate. For each pair $(n,m)\in\N^2$, we denote by
\begin{equation*}
    J_{n,m}=\widehat{J}_nJ_m\colon X\to \ell_p(c_0(X))
\end{equation*}
that is, the canonical isometric embedding into the coordinate $(n,m)$, where we recall that $J_m\colon X\to c_0(X)$ is the canonical embedding into the $m$-th coordinate.

More generally, given two sequences $\mathbf{n}=(n_j)_{j\in\N}$ and $\mathbf{m}=(m_j)_{j\in\N}$ in $\N$, with $\mathbf{n}$ increasing, we denote by
\begin{equation*}
    J_{\mathbf{n},\mathbf{m}}\colon \ell_p(X)\to \ell_p(c_0(X))
\end{equation*}
the isometric embedding which places the $j$-th coordinate of $\ell_p(X)$ in the coordinate $(n_j,m_j)$ of $\ell_p(c_0(X))$. We denote by
\begin{equation*}
    R_{\mathbf{n},\mathbf{m}}\colon \ell_p(c_0(X))\to \ell_p(X)
\end{equation*}
the corresponding coordinate projection. Thus, if $y=(y_{n,m})_{n,m\in\N}\in \ell_p(c_0(X))$, then
\begin{equation*}
    R_{\mathbf{n},\mathbf{m}}y=(y_{n_j,m_j})_{j\in\N}.
\end{equation*}
Naturally, $R_{\mathbf{n},\mathbf{m}}J_{\mathbf{n},\mathbf{m}}=I_{\ell_p(X)}$. For $M\subseteq\N$, we write $\widehat{P}_M\colon \ell_p(c_0(X))\to \ell_p(c_0(X))$ for the projection onto the outer coordinates belonging to $M$. 

\begin{lemma}\label{lmm:lp-c0-upper-reduction}
    Let $1\leq p<\infty$, let $X$ be a Banach space, and let
    \begin{equation*}
        T\colon \ell_p(c_0(X))\to \ell_p(c_0(X))
    \end{equation*}
    be an operator. Then, for every $\varepsilon>0$, there exist increasing sequences $\mathbf{n}=(n_j)_{j\in\N}$ and $\mathbf{m}=(m_j)_{j\in\N}$ in $\N$, and an upper triangular operator $U\colon \ell_p(X)\to \ell_p(X)$, such that
    \begin{equation*}
        \norm{R_{\mathbf{n},\mathbf{m}}TJ_{\mathbf{n},\mathbf{m}}-U}<\varepsilon.
    \end{equation*}
\end{lemma}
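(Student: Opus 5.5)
We build $\mathbf{n}$ and $\mathbf{m}$ by recursion, choosing at each stage $j$ one new coordinate $(n_j,m_j)$. At the same time we shrink an infinite set $N_j\subseteq\N$ of admissible outer indices. The goal is that the $j$-th selected column $TJ_{n_j,m_j}$ has total mass below $\varepsilon/2^{j}$ on the rows $(n_i,m_i)$ with $i>j$. Once this holds, the estimate from the proof of \Cref{prop: prop-upper-triangular} applies verbatim. That estimate uses $1$-unconditionality of $\ell_p$ and $\norm{x_j}\le 1$ for $\norm{x}\le 1$. Letting $U$ be the upper triangular part of $R_{\mathbf{n},\mathbf{m}}TJ_{\mathbf{n},\mathbf{m}}$, it gives
\begin{equation*}
    \norm{R_{\mathbf{n},\mathbf{m}}TJ_{\mathbf{n},\mathbf{m}}-U}\le \sum_{j}\sum_{i>j}\norm{P_{(n_i,m_i)}TJ_{n_j,m_j}}<\varepsilon .
\end{equation*}
The same computation shows that $U$ is bounded.

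The tool that makes each step possible is \Cref{lmm:c0-lp-coordinate-smallness}. Set $N_0=\N$. At stage $j$ we have an infinite set $N_{j-1}$, and we choose $n_j\in N_{j-1}$ with $n_j>n_{j-1}$. We then consider the operator
\begin{equation*}
    S_j=\widehat P_{N_{j-1}\setminus\{n_1,\dots,n_j\}}\,T\,\widehat J_{n_j}\colon c_0(X)\to \ell_p(c_0(X)).
\end{equation*}
We apply \Cref{lmm:c0-lp-coordinate-smallness} to $S_j$, with $Y=c_0(X)$, the outer $\ell_p$ coordinates playing the role of $\ell_p(Y)$, and the constant $\varepsilon/2^{j}$. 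This gives an inner index $m$ and an infinite set $M$ of outer indices with $\norm{\widehat P_M T J_{n_j,m}}<\varepsilon/2^{j}$. A small refinement is needed so that $\mathbf{m}$ is increasing. We apply the lemma to the restriction of $S_j$ to $c_0(\{m>m_{j-1}\},X)\simeq c_0(X)$, which is again a $c_0$-sum. This yields $m_j>m_{j-1}$ and an infinite set $N_j\subseteq N_{j-1}\setminus\{n_1,\dots,n_j\}$ with $\norm{\widehat P_{N_j}TJ_{n_j,m_j}}<\varepsilon/2^{j}$.

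Every later $n_i$ is chosen from $N_{i-1}\subseteq N_j$ when $i>j$. The rows $(n_i,m_i)$ with $i>j$ therefore all lie inside the outer block $N_j$, and their combined contribution is controlled by the single $\ell_p$-block estimate. Because $\ell_p$ is $1$-unconditional and the $n_i$ are distinct,
\begin{equation*}
    \Bigl(\sum_{i>j}\norm{P_{(n_i,m_i)}TJ_{n_j,m_j}x}^p\Bigr)^{1/p}\le \norm{\widehat P_{N_j}TJ_{n_j,m_j}}\,\norm{x}.
\end{equation*}

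The main obstacle is the error estimate at the end, because we control an $\ell_p$-norm of each column, not the sum of norms used in \Cref{prop: prop-upper-triangular}. We handle this by bounding directly. For $x=(x_j)$ in the unit ball of $\ell_p(X)$, the lower part of $R_{\mathbf{n},\mathbf{m}}TJ_{\mathbf{n},\mathbf{m}}x$ equals
\begin{equation*}
    \sum_j R_{\mathbf{n},\mathbf{m}}\widehat P_{\{n_i:i>j\}}TJ_{n_j,m_j}x_j .
\end{equation*}
By the triangle inequality its norm is at most $\sum_j \norm{\widehat P_{N_j}TJ_{n_j,m_j}}\,\norm{x_j}<\sum_j\varepsilon/2^{j}=\varepsilon$. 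Here we use that $R_{\mathbf{n},\mathbf{m}}$ has norm one and that $\widehat P_{\{n_i:i>j\}}\widehat P_{N_j}=\widehat P_{\{n_i:i>j\}}$. This avoids summing individual entries, and the column sums converge, so $U$ is bounded.
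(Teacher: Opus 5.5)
Your proposal is correct and follows essentially the paper's proof. It uses the same recursion via \Cref{lmm:c0-lp-coordinate-smallness}, with your restriction to inner indices beyond $m_{j-1}$ playing the role of the paper's shift $W_j$, and the same column-wise triangle-inequality bound $\sum_j\|\widehat P_{N_j}TJ_{n_j,m_j}\|\,\|x_j\|$ for the strictly lower part, which is exactly the paper's $Lx=\sum_j H_jx_j$. Two small fixes: drop the claim in your first paragraph that the double sum of entry norms is below $\varepsilon$, since your construction only controls the $\ell_p$-norm of each column tail, as you note later. Also, when invoking the lemma, regard $S_j$ as an operator into $\ell_p(N_{j-1}\setminus\{n_1,\dots,n_j\},c_0(X))$ rather than $\ell_p(c_0(X))$, so that the infinite set it returns genuinely lies inside $N_{j-1}\setminus\{n_1,\dots,n_j\}$.
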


\begin{proof}
    We shall recursively construct pairs $(n_j,m_j)\in\N^2$ and infinite subsets
    \begin{equation*}
        \N=M_0\supseteq M_1\supseteq M_2\supseteq \ldots
    \end{equation*}
    such that, for every $j\in\N$,
    \begin{align*}
        n_j\in M_{j-1}, \qquad M_j\subseteq M_{j-1}\cap\{n\in\N:n>n_j\}, \\
         m_j>m_{j-1}, \qquad  \text{and} \qquad \norm{\widehat{P}_{M_j}TJ_{n_j,m_j}}<\varepsilon / 2^{j+1},
    \end{align*}
    where we set $m_0=0$.

    Suppose that $M_{j-1}$ has been chosen. Choose $n_j\in M_{j-1}$, and set
    \begin{equation*}
        N_j=M_{j-1}\cap\{n\in\N:n>n_j\}.
    \end{equation*}
    Let $W_j\colon c_0(X)\to c_0(X)$ be the isometric embedding which places the $k$-th coordinate of $c_0(X)$ in the coordinate $m_{j-1}+k$. Applying \Cref{lmm:c0-lp-coordinate-smallness} to the operator
    \begin{equation*}
        \widehat{P}_{N_j}T\widehat{J}_{n_j}W_j\colon c_0(X)\to \ell_p(N_j,c_0(X)),
    \end{equation*}
    we obtain $k_j\in\N$ and an infinite subset $M_j\subseteq N_j$ such that
    \begin{equation*}
        \norm{\widehat{P}_{M_j}T\widehat{J}_{n_j}W_jJ_{k_j}}<\varepsilon / 2^{j+1}.
    \end{equation*}
    Set $m_j=m_{j-1}+k_j$. Since $W_jJ_{k_j}=J_{m_j}$, we have $\widehat{J}_{n_j}W_jJ_{k_j}=J_{n_j,m_j}$. Hence
    \begin{equation*}
        \norm{\widehat{P}_{M_j}TJ_{n_j,m_j}}<\varepsilon / 2^{j+1}.
    \end{equation*}
    This completes the recursive construction. In particular, both $(n_j)_{j\in\N}$ and $(m_j)_{j\in\N}$ are increasing.

    Set $S = R_{\mathbf{n},\mathbf{m}}TJ_{\mathbf{n},\mathbf{m}}$ and write $S = (S_{i,j})_{i,j\in\N}$ with respect to the natural decomposition of $\ell_p(X)$. For each $j\in\N$, define $H_j\colon X\to \ell_p(X)$ coordinate-wise by
    \begin{equation*}
        P_iH_jx=
        \begin{cases}
            S_{i,j}x, & i>j,\\
            0, & i\leq j.
        \end{cases}
    \end{equation*}
    If $i>j$, then $n_i\in M_j$. Hence, for every $j\in\N$ and every $x\in X$, we have
    \begin{equation*}
        \norm{H_jx}_{\ell_p(X)}=\left(\sum_{i>j}\norm{S_{i,j}x}^p\right)^{1/p}\leq \norm{\widehat{P}_{M_j}TJ_{n_j,m_j}x}<(\varepsilon/2^{j+1})\norm{x}.
    \end{equation*}
    In particular, $\norm{H_j}\leq \varepsilon/2^{j+1}$. Define $L\colon \ell_p(X)\to \ell_p(X)$ coordinate-wise by
    \begin{equation*}
        P_i Lx=\sum_{j<i}S_{i,j}x_j \qquad (x=(x_j)_{j\in\N}\in\ell_p(X),\ i\in\N).
    \end{equation*}
    For $x=(x_j)_{j\in\N}\in\ell_p(X)$, we have
    \begin{equation*}
        Lx=\sum_{j=1}^\infty H_jx_j.
    \end{equation*}
    Hence
    \begin{equation*}
        \norm{Lx}\leq \sum_{j=1}^\infty \norm{H_jx_j}\leq \sum_{j=1}^\infty (\varepsilon/2^{j+1})\norm{x_j}\leq \left(\sum_{j=1}^\infty \varepsilon/2^{j+1}\right)\norm{x}= (\varepsilon/2)\norm{x}.
    \end{equation*}
    Thus $L$ is bounded and $\norm{L}<\varepsilon$. Now set $U=S-L$. Since $L$ is precisely the strictly lower triangular part of $S$, the operator $U$ is upper triangular. Moreover,
    \begin{equation*}
        \norm{R_{\mathbf{n},\mathbf{m}}TJ_{\mathbf{n},\mathbf{m}}-U}=\norm{S-U}=\norm{L}<\varepsilon.
    \end{equation*}
\end{proof}

\subsection{Reduction to lower triangular form.}

We next establish a corresponding reduction to lower triangular form. Together with the preceding reduction, this will yield the desired diagonal form. The argument rests on the topological indivisibility, together with the contrast between the inherently $\ell_1$-type nature of scalar addition and the fact that disjoint vectors in $\ell_p$ cannot combine in an $\ell_1$-fashion whenever $1 < p \leq \infty$.

We prove a reduction for row operators to essentially disjoint parts, in the spirit of \Cref{cor: preliminary-upper-triangular} for column operators. We control the norm of the resulting row operator, rather than the sum of the norms of its entries. Recall that, if $A$ is a closed subset of $[0,\alpha]$ or $[0,1]$, then $R_A$ denotes the corresponding restriction operator.

\begin{lemma}[Backward reduction] \label{lmm: lower-triangular}
    Let $K$ be either $[0,1]$ or an ordinal interval $[0,\alpha]$ with characteristic system $(\eta,1)$, and let $1<p\leq\infty$. Let $T\colon \ell_p(C(K))\to C(K)$ be an operator, written as $T=(T_m)_{m\in\N}$, and let $\varepsilon>0$. Then there exist an infinite subset $M\subseteq\N$ and a closed subset $A\subseteq K$ homeomorphic to $K$ such that
    \begin{equation*}
        \norm{R_A T J_M} \leq \varepsilon.
    \end{equation*}
\end{lemma}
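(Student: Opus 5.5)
The plan is a pigeonhole argument over finitely many disjoint infinite blocks of coordinates, carried out pointwise in $K$, followed by topological indivisibility. Let $q$ be the conjugate exponent of $p$, so $1\leq q<\infty$ because $p>1$. Choose $r\in\N$ with $r\varepsilon^q>\norm{T}^q$, and partition $\N$ into pairwise disjoint infinite sets $N_1,\dots,N_r$. For $t\in K$ and $1\leq s\leq r$, set
\begin{equation*}
    a_s(t)=\norm{\delta_t\circ T J_{N_s}}_{\ell_p(N_s,C(K))^*}.
\end{equation*}
The first goal is the pointwise estimate
\begin{equation*}
    \sum_{s=1}^r a_s(t)^q\leq \norm{T}^q \qquad \text{for every } t\in K,
\end{equation*}
where for $p=\infty$ this is read with $q=1$.

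To prove this estimate, fix $t\in K$ and $\eta>0$, and for each $s$ choose $x_s\in B_{\ell_p(N_s,C(K))}$ with $\delta_t(TJ_{N_s}x_s)\geq a_s(t)-\eta$, after multiplying by a unimodular scalar. Then, for nonnegative scalars $c_1,\dots,c_r$, the vector $x=\sum_s c_sJ_{N_s}x_s$ has disjointly supported pieces. Hence $\norm{x}\leq(\sum_s c_s^p)^{1/p}$ if $p<\infty$, and $\norm{x}\leq\max_s c_s$ if $p=\infty$. Moreover,
\begin{equation*}
    \sum_s c_s\bigl(a_s(t)-\eta\bigr)\leq |\delta_t(Tx)|\leq \norm{T}\norm{x}.
\end{equation*}
Optimising over $(c_s)$ by $\ell_p$--$\ell_q$ duality and letting $\eta\to0$ gives the estimate. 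The same argument works for $p=\infty$ because we only test $T$ on vectors with finitely many disjoint blocks, so the possible failure of $T$ to act according to its matrix on $\ell_\infty$ plays no role. This is exactly where $p>1$ is used: scalar evaluation adds the blocks in an $\ell_1$ fashion, whereas the unit ball of $\ell_p$ only sees them in an $\ell_p$ fashion.

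It follows that, for each $t$, fewer than $r$ of the indices $s$ can satisfy $a_s(t)>\varepsilon$, since otherwise $\sum_s a_s(t)^q\geq r\varepsilon^q>\norm{T}^q$. Hence the sets
\begin{equation*}
    A_s=\{t\in K: a_s(t)\leq\varepsilon\},\qquad s=1,\dots,r,
\end{equation*}
cover $K$. Each $A_s$ is closed. Indeed, $t\mapsto\delta_t$ is weak$^*$-continuous, so $t\mapsto (TJ_{N_s})^*\delta_t$ is weak$^*$-continuous into $\ell_p(N_s,C(K))^*$, and the dual norm is weak$^*$-lower semicontinuous. Consequently $a_s$ is lower semicontinuous and its sublevel sets are closed.

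Now apply indivisibility. If $K=[0,\alpha]$ has characteristic system $(\eta,1)$, \Cref{lmm: indivisibility-compact-subsets} yields $s_0$ with $A_{s_0}$ homeomorphic to $K$, and we take $A=A_{s_0}$. If $K=[0,1]$, \Cref{lmm: countable-top-indiv} (applied to the finite cover, padded with empty sets) yields $s_0$ and a closed $A\subseteq A_{s_0}$ homeomorphic to $[0,1]$. Set $M=N_{s_0}$, which is infinite. Finally,
\begin{equation*}
    \norm{R_ATJ_M}=\sup_{t\in A}\norm{\delta_t\circ TJ_M}=\sup_{t\in A}a_{s_0}(t)\leq\varepsilon,
\end{equation*}
as required.

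The main point needing care is the pointwise $\ell_q$ estimate, uniformly including $p=\infty$, together with the semicontinuity that makes the sets $A_s$ closed. Once these are in place, the indivisibility lemmas finish the proof directly.
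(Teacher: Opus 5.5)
Your proof is correct and follows the paper's argument. You partition $\N$ into finitely many infinite blocks, take the closed sets where the block-restricted evaluation functional has norm at most $\varepsilon$, show they cover $K$ by testing $T$ on finite sums of disjointly supported vectors (scalar evaluation adds them in an $\ell_1$ fashion, the $\ell_p$-norm does not), and finish with topological indivisibility. The differences are cosmetic: you prove the slightly sharper pointwise bound $\sum_s a_s(t)^q\leq\norm{T}^q$ where the paper uses equal weights in a direct contradiction (giving the same threshold $r^{(p-1)/p}\varepsilon>\norm{T}$), and you justify the closedness of the sets via lower semicontinuity, which the paper only asserts.
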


\begin{proof}
    Let $K$ be either $[0,\alpha]$ or $[0,1]$. We first consider the case $1<p<\infty$. Choose $N \in\N$ satisfying $N^{\frac{p-1}{p}}>\norm{T}/\varepsilon$ and partition $\N$ into infinite disjoint sets $M_1,\dots,M_N$.
    
    For each $1\leq n \leq N$, define
    \begin{equation*}
        A_n=\{a\in K:\sup_{f\in\ell_p(M_n,C(K)),\,\norm{f}\leq 1}|Tf(a)|\leq\varepsilon\}.
    \end{equation*}
    Then each $A_n$ is a closed subset of $K$. We claim that $\bigcup_{n=1}^N A_n=K$. Indeed, suppose towards a contradiction that there exists $a\in K\setminus\bigcup_{n=1}^N A_n$. By definition, for each $n=1,\dots,N$, we can find $f_n\in\ell_p(M_n,C(K))$ with $\norm{f_n}=1$ and $Tf_n(a)>\varepsilon$. Define $f=\sum_{n=1}^N f_n$. Since the $f_n$ have pairwise disjoint supports, $\norm{f}=N^{1/p}$. On the other hand,
    \begin{equation*}
        N \varepsilon<\sum_{n=1}^N Tf_n(a)=Tf(a)\leq\norm{T}\norm{f}=\norm{T}N^{1/p},
    \end{equation*}
    which contradicts the choice of $N$. Hence $\bigcup_{n=1}^N A_n=K$.
    
    It remains only to use the appropriate topological indivisibility statement. If $K=[0,\alpha]$, then Lemma~\ref{lmm: indivisibility-compact-subsets} gives $1\leq n_0\leq N$ such that $A_{n_0}$ is homeomorphic to $[0,\alpha]$. Taking $M=M_{n_0}$ and $A=A_{n_0}$ gives the desired conclusion. If $K=[0,1]$, then Lemma~\ref{lmm: countable-top-indiv} gives $1\leq n_0\leq N$ and a closed subset $A\subseteq A_{n_0}$ homeomorphic to $[0,1]$. Taking $M=M_{n_0}$ gives the desired conclusion, since the defining estimate for $A_{n_0}$ is inherited by closed subsets.
    
    The case $p=\infty$ is entirely analogous. One chooses $N >\norm{T}/\varepsilon$ instead, and observes that, for $f=\sum_{n=1}^N f_n$ with the $f_n$ supported on pairwise disjoint sets $M_n$, we have $\norm{f}=1$. The same contradiction argument gives $\bigcup_{n=1}^N A_n = K$, and the same topological argument finishes the proof.
\end{proof}

\begin{remark}\label{rem: lower-triangular-form}
    One can, in fact, obtain a stronger version of the preceding lemma. Indeed, if, in the proof, one partitions $\N$ into countably many pairwise disjoint subsets $(M_j)_{j\in\N}$, then the same ideas show that the conclusion remains valid for the $E$-sum of $C(\alpha)$, provided that $E$ is a Banach space with an unconditional basis which does not contain the spaces $\ell_1^n$ uniformly as block subspaces. In the case of $C[0,1]$, using the countable topological indivisibility of $[0,1]$, one obtains the corresponding conclusion under the weaker assumption that no block sequence of the basis of $E$ spans a copy of $\ell_1$.
\end{remark}

The previous lemma is obviously false for $\ell_1$-sums. Indeed, for example, the operator
\begin{equation*}
    T \colon \ell_1(C(\alpha)) \to C(\alpha),
    \qquad
    (f_n)_{n \in \N} \mapsto \sum_{n \in \N} f_n
\end{equation*}
clearly cannot satisfy the conclusion of the lemma above. Thus, a different argument will be needed in order to obtain a lower triangular reduction in the case $p = 1$. 

We obtain the following reduction to lower triangular form. We note that, when $p = \infty$, the lower triangular operator associated with a given operator is well defined, has norm no greater than that of the original operator, and is lower triangular in the sense of \Cref{def: lower-triangular-special}. By contrast, when $1 < p < \infty$, the lower triangular part is, a priori, only a linear map; its boundedness is established implicitly through the proof of the estimate $\norm{R P_M T J_M - L} < \varepsilon$.

\begin{proposition}[Reduction to lower triangular operators]\label{prop: lower-triangular}
    Let $K$ be either $[0,1]$ or an ordinal interval $[0,\alpha]$ with characteristic system $(\eta,1)$, and let $1<p\leq\infty$. Then, for every operator
    \begin{equation*}
        T\colon \ell_p(C(K))\to \ell_p(C(K)), \qquad T=(T_{n,m})_{n,m\in\N},
    \end{equation*}
    and every $\varepsilon>0$, there exist an infinite subset $M=\{m_j:j\in\N\}\subseteq\N$ and a family $(A_{m_j})_{j\in\N}$ of closed subsets of $K$, each homeomorphic to $K$, such that
    \begin{equation*}
        \norm{R P_M T J_M-L}<\varepsilon,
    \end{equation*}
    where
    \begin{equation*}
        R\colon \ell_p(M,C(K))\to \left(\bigoplus_{j\in\N}C(A_{m_j})\right)_{\ell_p}
    \end{equation*}
    is the diagonal operator defined by
    \begin{equation*}
        R=\diag(R_{A_{m_j}}:j\in\N),
    \end{equation*}
    and
    \begin{equation*}
        L\colon \ell_p(M,C(K))\to \left(\bigoplus_{j\in\N}C(A_{m_j})\right)_{\ell_p}
    \end{equation*}
    is the lower triangular operator defined by
    \begin{equation*}
        L_{m_i,m_j}=\begin{cases} R_{A_{m_i}}T_{m_i,m_j}, & \text{if } i\geq j,\\ 0, & \text{if } i<j. \end{cases}
    \end{equation*}
\end{proposition}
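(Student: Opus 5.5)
We build $M$ and the sets $A_{m_j}$ recursively, applying the backward reduction (\Cref{lmm: lower-triangular}) at each step to a row operator. We use a decreasing chain of infinite sets $\N=N_0\supseteq N_1\supseteq\cdots$ and choose $m_j\in N_{j-1}$ with $m_j>m_{j-1}$. At stage $j$ we consider the row operator
\begin{equation*}
    T^{(j)} = P_{m_j} T J_{N_{j-1}\cap\{n>m_j\}}\colon \ell_p(N_{j-1}\cap\{n>m_j\},C(K))\to C(K).
\end{equation*}
After identifying $\ell_p$ of an infinite index set with $\ell_p(C(K))$, \Cref{lmm: lower-triangular} gives two things. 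It gives an infinite set $N_j\subseteq N_{j-1}\cap\{n>m_j\}$, and a closed set $A_{m_j}\subseteq K$ homeomorphic to $K$, such that
\begin{equation*}
    \norm{R_{A_{m_j}} P_{m_j} T J_{N_j}}\leq \varepsilon_j .
\end{equation*}
Here $(\varepsilon_j)$ is a fixed summable sequence. For $1<p<\infty$ we take $\varepsilon_j=\varepsilon/2^{j+1}$; for $p=\infty$ we need a more careful choice, discussed below. We then set $M=\{m_j\}$. By construction $\{m_i: i>j\}\subseteq N_j$ for every $j$.

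Next we estimate the error. Let $D=RP_MTJ_M-L$. For $x=(x_{m_i})_{i}\in\ell_p(M,C(K))$ of norm at most one, the $j$-th coordinate of $Dx$ is
\begin{equation*}
    (Dx)_{m_j}=R_{A_{m_j}}P_{m_j}T\Bigl(\sum_{i>j}J_{m_i}x_{m_i}\Bigr)=R_{A_{m_j}}P_{m_j}TJ_{N_j}\bigl(P_{N_j}J_M x\bigr)_{\{m_i:i>j\}} .
\end{equation*}
This identity holds because, for $1<p<\infty$, the vector $x$ is the norm limit of its finite truncations and $T$ is continuous. Its norm is therefore at most $\varepsilon_j\norm{x}\le\varepsilon_j$. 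Hence
\begin{equation*}
    \norm{Dx}_{\ell_p}\le\Bigl(\sum_j\varepsilon_j^p\Bigr)^{1/p}\le\sum_j\varepsilon_j<\varepsilon .
\end{equation*}
In particular $L=RP_MTJ_M-D$ is bounded, which addresses the remark preceding the statement.

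The main obstacle is the case $p=\infty$, because there $T$ need not act according to its matrix. Two points need care. First, we must check that the operator $L$ defined by the matrix is genuinely lower triangular in the sense of \Cref{def: lower-triangular-special}. Second, we must check that the strictly-upper part of $RP_MTJ_M$ is controlled by its row estimate rather than by matrix entries. We would handle this by never expanding $x$ into a series. Instead we write $x = P_{\{m_i:i\le j\}}x + P_{\{m_i:i>j\}}x$ inside $\ell_\infty(M,C(K))$, where the first piece is a finite sum. Then
\begin{equation*}
    (RP_MTJ_Mx)_{m_j}=\sum_{i\le j}R_{A_{m_j}}T_{m_j,m_i}x_{m_i}+R_{A_{m_j}}P_{m_j}TJ_{N_j}(\cdots),
\end{equation*}
and the second term has norm at most $\varepsilon_j\norm{x}$. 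For $p=\infty$ we take $\varepsilon_j\le\varepsilon/2$ uniformly, since only the supremum over $j$ matters. This yields $\norm{RP_MTJ_M-L}\le\varepsilon/2<\varepsilon$. It also shows that $L$, defined coordinatewise by the finite sums $\sum_{i\le j}$, is a bounded lower triangular operator acting according to its matrix.
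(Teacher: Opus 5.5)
Your proposal is correct and follows essentially the same route as the paper: you recursively apply \Cref{lmm: lower-triangular} to the row operators along a nested chain of infinite sets. You then bound each coordinate of $RP_MTJ_M-L$ by splitting $x$ into a finite head and a tail supported in $N_j$, which is exactly how the paper avoids the matrix issue for $p=\infty$. The only differences are cosmetic: you take the constant choice $\varepsilon_j=\varepsilon/2$ for $p=\infty$, where the paper uses one summable sequence for all $p$, and you use the sharper estimate $(\sum_j\varepsilon_j^p)^{1/p}$ in place of $\sum_j\varepsilon_j$.
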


\begin{proof}
    Let $T$ and $\varepsilon>0$ be fixed. We apply \Cref{lmm: lower-triangular} recursively to the rows of $T$. Choose $m_1=1$. Applying \Cref{lmm: lower-triangular} to the operator
    \begin{equation*}
        P_{m_1}T\colon \ell_p(C(K))\to C(K),
    \end{equation*}
    we obtain an infinite subset $M_1\subseteq\N$ and a closed subset $A_{m_1}\subseteq K$, homeomorphic to $K$, such that
    \begin{equation*}
        \norm{R_{A_{m_1}}P_{m_1}TJ_{M_1}}<\varepsilon/4.
    \end{equation*}

    Suppose now that we have constructed
    \begin{equation*}
        1=m_1<\dots<m_k, \qquad \N=M_0\supseteq M_1\supseteq\dots\supseteq M_k,
    \end{equation*}
    and closed sets $A_{m_1},\dots,A_{m_k}\subseteq K$, each homeomorphic to $K$, such that $m_j\in M_{j-1}$ and
    \begin{equation*}
        \norm{R_{A_{m_j}}P_{m_j}TJ_{M_j}}<\varepsilon/2^{j+1}
    \end{equation*}
    for each $j\leq k$.

    Choose any $m_{k+1}\in M_k$ with $m_{k+1}>m_k$. Identifying $\ell_p(M_k,C(K))$ canonically with $\ell_p(C(K))$, we may apply \Cref{lmm: lower-triangular} to
    \begin{equation*}
        P_{m_{k+1}}TJ_{M_k}\colon \ell_p(M_k,C(K))\to C(K).
    \end{equation*}
    We thus obtain an infinite subset $M_{k+1}\subseteq M_k$ and a closed subset $A_{m_{k+1}}\subseteq K$, homeomorphic to $K$, such that
    \begin{equation*}
        \norm{R_{A_{m_{k+1}}}P_{m_{k+1}}TJ_{M_{k+1}}}<\varepsilon/2^{k+2}.
    \end{equation*}
    This completes the inductive construction.

    Set $M=\{m_j:j\in\N\}$, and define $R$ and $L$ as in the statement. It is straightforward to verify that $L$ is a linear map; its boundedness will follow from the estimates below. Let $W=RP_MTJ_M-L$. For each $j\in\N$, let
    \begin{equation*}
        Q_{m_j}\colon \left(\bigoplus_{i\in\N}C(A_{m_i})\right)_{\ell_p}\to C(A_{m_j})
    \end{equation*}
    denote the coordinate projection onto the $m_j$-th coordinate. Then, by the definition of $L$, for every $x\in \ell_p(M,C(K))$ we have
    \begin{equation*}
        Q_{m_j}Wx=R_{A_{m_j}}P_{m_j}TJ_M P_{M\setminus\{m_1,\dots,m_j\}}x,
    \end{equation*}
    where $P_{M\setminus\{m_1,\dots,m_j\}}$ denotes the coordinate projection on $\ell_p(M,C(K))$ onto the tail coordinates. Since $M\setminus\{m_1,\dots,m_j\}\subseteq M_j$, it follows that
    \begin{equation*}
        \norm{Q_{m_j}W}\leq\norm{R_{A_{m_j}}P_{m_j}TJ_{M_j}}<\varepsilon/2^{j+1}
    \end{equation*}
    for every $j\in\N$. Finally, let $x\in\ell_p(M,C(K))$ with $\norm{x}\leq 1$. Then
    \begin{align*}
        \norm{Wx}&=\left\|\bigl(Q_{m_j}Wx\bigr)_{j\in\N}\right\|_{\ell_p}\leq\sum_{j=1}^\infty\norm{Q_{m_j}Wx}\leq\sum_{j=1}^\infty\norm{Q_{m_j}W}\norm{x} \\&<\sum_{j=1}^\infty\varepsilon/2^{j+1}=\varepsilon/2.
    \end{align*}
    Hence $\norm{RP_MTJ_M-L}=\norm{W}<\varepsilon$. Since $RP_MTJ_M$ and $W$ are bounded, $L=RP_MTJ_M-W$ is bounded as well.
\end{proof}

\begin{remark}
    In the previous proposition, when we refer to a lower triangular operator in the case $p=\infty$, we mean this in the sense developed in \Cref{sec: notation}. That is, it is not enough for the operator merely to have a lower triangular operator matrix; it must also genuinely act according to that matrix.
\end{remark}

\subsection{Dichotomy for upper triangular operators.}

Finally, the coup de gr\^ace: we show that, if the operator $T$ is upper triangular, then either $T$ or $I-T$ factors the identity on the space. For convenience, we restrict ourselves to the case $1<p<\infty$. We shall deal with the cases $p = 1$ and $p = \infty$ later.

\begin{proposition}[Dichotomy for upper triangular operators]\label{prop: upper-triangular-dichotomy}
    Let $1<p<\infty$, and let $K$ be either $[0,1]$ or an ordinal interval $[0,\alpha]$ with characteristic system $(\eta,1)$. Suppose that $C(K)$ has the $C$-PFP, for some constant $C\geq 1$. Then, for every upper triangular operator $U\colon \ell_p(C(K))\to\ell_p(C(K))$, either $U$ or $I_{\ell_p(C(K))}-U$ factors the identity on $\ell_p(C(K))$ with constant $2C$.
\end{proposition}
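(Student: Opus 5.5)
The plan is to combine the backward reduction, \Cref{prop: lower-triangular}, with the upper triangularity of $U$, so that $U$ compresses almost to a \emph{diagonal} operator. Then I apply the $C$-PFP of $C(K)$ entrywise and absorb the small error by a Neumann series.

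\textbf{Step 1: compress to almost diagonal form.} Fix $\varepsilon>0$ with $C\varepsilon\leq 1/2$. Apply \Cref{prop: lower-triangular} to $U$ to obtain $M=\{m_j:j\in\N\}$ and closed sets $A_{m_j}\subseteq K$, each homeomorphic to $K$, with $\norm{RP_MUJ_M-L}<\varepsilon$. Since $U$ is upper triangular, $U_{m_i,m_j}=0$ for $i>j$. Hence the lower triangular operator $L$ has only diagonal entries, and
\begin{equation*}
L=\diag\bigl(R_{A_{m_j}}U_{m_j,m_j}: j\in\N\bigr).
\end{equation*}
The same $M$ and $A_{m_j}$ also work for $I-U$, because $RP_M(I-U)J_M=R-RP_MUJ_M$ lies within $\varepsilon$ of $\diag(R_{A_{m_j}}(I-U_{m_j,m_j}))$.

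\textbf{Step 2: apply the $C$-PFP entrywise.} Let $S_j:=S_{A_{m_j}}$ be the norm-one extension operators from \Cref{prop: simultaneous-extension} or \Cref{thm: borsuk-dugundji}. Set $D_j=R_{A_{m_j}}U_{m_j,m_j}S_j$ on $C(A_{m_j})$. Then $R_{A_{m_j}}(I-U_{m_j,m_j})S_j=I_{C(A_{m_j})}-D_j$. Since $C(A_{m_j})$ is isometric to $C(K)$ via the homeomorphism, and the $C$-PFP is invariant under isometries, for each $j$ the identity on $C(A_{m_j})$ factors with constant $C$ through $D_j$ or through $I-D_j$. One of these alternatives holds for infinitely many $j$; by symmetry assume it is $D_j$, for $j$ in an infinite set $N\subseteq\N$. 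I normalise the factors so that $W_jD_jV_j=I_{C(A_{m_j})}$ with $\norm{V_j}\leq 1$ and $\norm{W_j}\leq C$.

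\textbf{Step 3: assemble the factorisation.} Let $M'=\{m_j:j\in N\}$. Define
\begin{equation*}
\mathcal V=\diag(S_jV_j:j\in N)\colon\Bigl(\bigoplus_{j\in N}C(A_{m_j})\Bigr)_{\ell_p}\to\ell_p(M',C(K)),
\qquad
\mathcal W=\diag(W_j:j\in N).
\end{equation*}
Restricting the estimate of Step 1 to the coordinates in $M'$ only decreases norms, so
\begin{equation*}
\norm{\mathcal W R P_{M'}UJ_{M'}\mathcal V-I}\leq\norm{\mathcal W}\,\varepsilon\,\norm{\mathcal V}\leq C\varepsilon\leq 1/2.
\end{equation*}
Here I use that $\mathcal W\diag(R_{A_{m_j}}U_{m_j,m_j})\mathcal V=\diag(W_jD_jV_j)=I$. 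Write $Z:=\mathcal W RP_{M'}UJ_{M'}\mathcal V$. Then $Z$ is invertible with $\norm{Z^{-1}}\leq 2$. It follows that $(Z^{-1}\mathcal W RP_{M'})\,U\,(J_{M'}\mathcal V)=I$, with
\begin{equation*}
\norm{Z^{-1}\mathcal W RP_{M'}}\,\norm{J_{M'}\mathcal V}\leq 2C.
\end{equation*}
Finally, the domain $\bigl(\bigoplus_{j\in N}C(A_{m_j})\bigr)_{\ell_p}$ is isometric to $\ell_p(C(K))$. Conjugating by this isometry yields the identity of $\ell_p(C(K))$ factored through $U$ with constant $2C$. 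The other alternative gives the factorisation through $I-U$ in the same way.

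\textbf{Main obstacle.} The step needing most care is ensuring that the constant stays uniform. This requires three things. First, the entrywise factors must be normalised so that the diagonal operators $\mathcal V$ and $\mathcal W$ are bounded by $1$ and $C$. Second, the error from \Cref{prop: lower-triangular} must survive passing to the subsequence $N$; it does, since compressing to coordinates in $M'$ is contractive in $\ell_p$. Third, the single reduction must serve both $U$ and $I-U$, which is exactly what the identity $RP_M(I-U)J_M=R-RP_MUJ_M$ guarantees.
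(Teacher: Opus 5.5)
Your proposal is correct and follows essentially the same route as the paper: apply \Cref{prop: lower-triangular} to $U$, note that upper triangularity forces $L$ to be diagonal, factor each diagonal entry through the $C$-PFP using norm-one extension operators, pass to an infinite set of indices on which the same alternative holds, and absorb the error with a Neumann series. The remaining differences are cosmetic: you normalise the factors as $\norm{V_j}\leq 1$, $\norm{W_j}\leq C$ rather than $\sqrt C$ each, you take $C\varepsilon\leq 1/2$ directly, and you transport to $\ell_p(C(K))$ via the isometries $C(A_{m_j})\cong C(K)$ at the end rather than at the start.
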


\begin{proof}
    Fix an upper triangular operator $U\colon \ell_p(C(K))\to\ell_p(C(K))$ and choose $\delta>0$ such that $C\delta<1$ and
    \begin{equation*}
        \frac{C}{1-C\delta}<2C.
    \end{equation*}
    Applying \Cref{prop: lower-triangular}, we obtain an infinite subset
    \begin{equation*}
        M=\{m_j:j\in\N\}\subseteq\N
    \end{equation*}
    and closed sets $A_{m_j}\subseteq K$, each homeomorphic to $K$, such that
    \begin{equation*}
        \norm{R P_M U J_M-L}<\delta,
    \end{equation*}
    where $L$ is lower triangular. Since $U$ is upper triangular, $L$ is in fact diagonal.

    For each $j\in\N$, let $V_j\colon C(K)\to C(A_{m_j})$ be the isometric isomorphism induced by a homeomorphism between $A_{m_j}$ and $K$, and let $S_j\colon C(A_{m_j})\to C(K)$ be a norm-one extension operator, so that
    \begin{equation*}
        R_{A_{m_j}}S_j=I_{C(A_{m_j})}.
    \end{equation*}
    Define
    \begin{equation*}
        V=\diag(V_j:j\in\N) \qquad \text{and} \qquad S=\diag(S_j:j\in\N).
    \end{equation*}
    Consider the operator
    \begin{equation*}
        W=V^{-1}R P_M U J_M S V\colon \ell_p(C(K))\to\ell_p(C(K)).
    \end{equation*}
    Since $\norm{S}=\norm{V}=\norm{V^{-1}}=1$, it follows that
    \begin{equation*}
        \norm{W-D}<\delta,
    \end{equation*}
    where
    \begin{equation*}
        D=V^{-1}LSV.
    \end{equation*}
    Because $L$ is diagonal, so is $D$, say
    \begin{equation*}
        D=\diag(D_j:j\in\N),
    \end{equation*}
    where
    \begin{equation*}
        D_j=V_j^{-1}R_{A_{m_j}}U_{m_j,m_j}S_jV_j\colon C(K)\to C(K).
    \end{equation*}

    Since $C(K)$ has the $C$-PFP, for each $j\in\N$ either $D_j$ or $I_{C(K)}-D_j$ factors $I_{C(K)}$ with factorisation constant at most $C$. After passing to a further infinite subset of $M$ and relabelling, we may suppose that the same alternative holds for every $j\in\N$.

    First, suppose that, for every $j\in\N$, there exist operators $\Phi_j,\Psi_j\colon C(K)\to C(K)$ such that
    \begin{equation*}
        \Phi_jD_j\Psi_j=I_{C(K)} \qquad \text{and} \qquad \norm{\Phi_j}\norm{\Psi_j}\leq C.
    \end{equation*}
    Rescaling if necessary, we may assume that
    \begin{equation*}
        \norm{\Phi_j}\leq \sqrt C \qquad \text{and} \qquad \norm{\Psi_j}\leq \sqrt C \qquad \text{for all } j\in\N.
    \end{equation*}
    Define
    \begin{equation*}
        \Phi=\diag(\Phi_j:j\in\N) \qquad \text{and} \qquad \Psi=\diag(\Psi_j:j\in\N).
    \end{equation*}
    Then
    \begin{equation*}
        \Phi D\Psi=I_{\ell_p(C(K))} \qquad \text{and} \qquad \norm{\Phi}\norm{\Psi}\leq C,
    \end{equation*}
    hence
    \begin{equation*}
        \Phi W\Psi=I_{\ell_p(C(K))}+E, \qquad \text{where} \qquad E=\Phi(W-D)\Psi.
    \end{equation*}
    Since
    \begin{equation*}
        \norm{E}\leq \norm{\Phi}\norm{W-D}\norm{\Psi}\leq C\delta<1,
    \end{equation*}
    it follows that $I_{\ell_p(C(K))}+E$ is invertible, and
    \begin{equation*}
        I_{\ell_p(C(K))}=(I_{\ell_p(C(K))}+E)^{-1}\Phi W\Psi.
    \end{equation*}
    Thus $W$ factors the identity with factorisation constant at most
    \begin{equation*}
        \frac{C}{1-C\delta}<2C.
    \end{equation*}
    Since
    \begin{equation*}
        W=V^{-1}R P_M U J_M S V,
    \end{equation*}
    it follows that $W$ factors through $U$ with constant one. Hence $U$ factors the identity with the same bound.

    The case where, for every $j\in\N$, the operator $I_{C(K)}-D_j$ factors $I_{C(K)}$ is identical. Indeed, in that case, we obtain diagonal operators $\Phi$ and $\Psi$ with
    \begin{equation*}
        \Phi(I_{\ell_p(C(K))}-D)\Psi=I_{\ell_p(C(K))} \qquad \text{and} \qquad \norm{\Phi}\norm{\Psi}\leq C.
    \end{equation*}
    Since
    \begin{equation*}
        \norm{(I_{\ell_p(C(K))}-W)-(I_{\ell_p(C(K))}-D)}=\norm{W-D}<\delta,
    \end{equation*}
    the same perturbation argument shows that $I_{\ell_p(C(K))}-W$ factors the identity with factorisation constant at most
    \begin{equation*}
        \frac{C}{1-C\delta}<2C.
    \end{equation*}
    Since
    \begin{equation*}
        I_{\ell_p(C(K))}-W=V^{-1}R P_M(I_{\ell_p(C(K))}-U)J_M S V,
    \end{equation*}
    it follows that $I_{\ell_p(C(K))}-W$ factors through $I_{\ell_p(C(K))}-U$ with constant one. Hence $I_{\ell_p(C(K))}-U$ factors the identity with the same bound. This completes the proof.
\end{proof}

%% file: 3-0-1-The_ell_1_case.tex
\section{\texorpdfstring{Lower triangular reduction for $\ell_1$-sums}{Lower triangular reduction for ell1-sums}}\label{sec: ell1}

As observed above, \Cref{lmm: lower-triangular} fails when $p=1$, and therefore the lower triangular reduction used in the previous section cannot be applied in this case. We replace it with a different reduction, which uses the special geometry of the outer $\ell_1$-sum together with the extra room provided by an isomorphism $E(X)\simeq X$, where $E$ is either $c_0$ or $\ell_p$ for some $1<p<\infty$. The argument is similar in spirit to the use of the $c_0$-coordinate structure in \Cref{subsec: C01forward}.

Since this can be done in this more abstract setting without any additional work, and since the method may be useful elsewhere, we present the general form below. We begin with an abstract backward reduction, which is the key ingredient in the reduction to lower triangular form.

\begin{lemma}[Backward reduction]\label{lmm:coordinate-compression-l1}
    Let $X$ be a Banach space, let $N\in\N$, let $\varepsilon>0$, and let $E$ be either $c_0$, $\ell_\infty$, or $\ell_p$ for some $1<p<\infty$. Let $T\colon E(X)\to \ell_1^N(X)$ be an operator. Suppose that $N\varepsilon>\norm{T}$ if $E=c_0$ or $E=\ell_\infty$, and that $ N^{(p-1)/p}\varepsilon>\norm{T}$ if $E=\ell_p$ with $1<p<\infty$. Then there exist $1\leq n\leq N$ and an infinite subset $M\subseteq\N$ such that
    \begin{equation*}
        \norm{P_nTJ_M}\leq \varepsilon.
    \end{equation*}
\end{lemma}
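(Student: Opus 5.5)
We argue by contradiction, following the scheme of \Cref{lmm: lower-triangular}, with the $N$ coordinates of $\ell_1^N(X)$ playing the role of the covering sets. Partition $\N$ into $N$ pairwise disjoint infinite sets $M_1,\dots,M_N$. Suppose that the conclusion fails. Then, in particular, for each $1\leq n\leq N$ we have
\begin{equation*}
    \norm{P_nTJ_{M_n}}>\varepsilon .
\end{equation*}
For each $n$ we therefore choose $f_n\in E(M_n,X)$ with $\norm{f_n}\leq 1$ and $\norm{P_nTJ_{M_n}f_n}>\varepsilon$. By Hahn--Banach we also choose $x_n^*\in B_{X^*}$ with $x_n^*(P_nTJ_{M_n}f_n)>\varepsilon$; we may take this value real and positive after multiplying $f_n$ by a unimodular scalar. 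Only this diagonal choice is needed: the $n$-th output coordinate is tested against the $n$-th block of inputs.

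Next we combine the pieces with random signs, exactly as in \Cref{lmm:c0-lp-coordinate-smallness}. For $\theta\in\{-1,1\}^N$ set
\begin{equation*}
    f_\theta=\sum_{n=1}^N\theta_nJ_{M_n}f_n .
\end{equation*}
Since the supports are disjoint, $\norm{f_\theta}\leq 1$ when $E=c_0$ or $E=\ell_\infty$, and $\norm{f_\theta}\leq N^{1/p}$ when $E=\ell_p$. Using the $\ell_1$-norm on the target, we obtain
\begin{align*}
    \norm{T}\,\norm{f_\theta}\geq\norm{Tf_\theta}
    &=\sum_{n=1}^N\norm{P_nTf_\theta}\\
    &\geq\sum_{n=1}^N\bigl|x_n^*(P_nTf_\theta)\bigr| .
\end{align*}
We then average over $\theta$. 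As before, $\mathbb E_\theta\bigl|x_n^*P_nTf_\theta\bigr|\geq\bigl|\mathbb E_\theta[\theta_nx_n^*P_nTf_\theta]\bigr|$, and since $\mathbb E_\theta[\theta_n\theta_t]=\delta_{nt}$, the right-hand side equals $x_n^*(P_nTJ_{M_n}f_n)>\varepsilon$. This yields
\begin{equation*}
    \norm{T}\sup_\theta\norm{f_\theta}>N\varepsilon .
\end{equation*}
That is, $\norm{T}>N\varepsilon$ in the $c_0$ and $\ell_\infty$ cases, and $\norm{T}N^{1/p}>N\varepsilon$, i.e. $\norm{T}>N^{(p-1)/p}\varepsilon$, in the $\ell_p$ case. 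Both contradict the hypotheses, so some $n$ satisfies $\norm{P_nTJ_{M_n}}\leq\varepsilon$, and we take $M=M_n$.

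The only delicate point is the averaging step. Unlike in \Cref{lmm: lower-triangular}, there is no common evaluation point at which contributions add, so the off-diagonal terms $P_nTJ_{M_t}f_t$ with $t\neq n$ could cancel the diagonal ones. The Rademacher averaging, combined with the $\ell_1$-norm in the target (which sums over $n$ rather than taking a supremum), is what removes these cross terms. In the $\ell_\infty$ case we only evaluate $T$ on finitely supported vectors, so the failure of $T$ to act according to its matrix causes no difficulty.
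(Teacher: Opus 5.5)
Your proof is correct and is essentially the paper's own argument: pairwise disjoint infinite blocks $M_1,\dots,M_N$, norming functionals on the diagonal terms $P_nTJ_{M_n}$, Rademacher averaging to remove the cross terms, and the $\ell_1^N$-norm adding the $N$ diagonal contributions to exceed $N\varepsilon$, with the bound $\norm{f_\theta}\leq N^{1/p}$ in the $\ell_p$ case. One small correction: your closing remark about ``finitely supported vectors'' is inaccurate, since each $J_{M_n}f_n$ is supported on the infinite set $M_n$; it is harmless, because the argument only uses linearity of $T$ on the finite sum $f_\theta=\sum_t\theta_tJ_{M_t}f_t$ and never needs $T$ to act according to its matrix.
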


\begin{proof}
    We prove the case $c_0(X)$ first. We proceed by contradiction and assume that the statement is false. Thus, for every $1\leq s\leq N$ and every infinite set $M\subseteq\N$, we have $\norm{P_sTJ_M}>\varepsilon$. Let $M_1,\ldots,M_N$ be pairwise disjoint infinite subsets of $\N$.

    By assumption, for each $s=1,\ldots,N$ we have $\norm{P_sTJ_{M_s}}>\varepsilon$. Hence we may choose $x_s\in B_{c_0(M_s,X)}$ and $x_s^*\in B_{X^*}$ such that
    \begin{equation*}
        |x_s^*P_sTJ_{M_s}x_s|>\varepsilon.
    \end{equation*}
    For each choice of signs $\theta=(\theta_1,\ldots,\theta_N)\in\{-1,1\}^N$, set
    \begin{equation*}
        x_\theta=\sum_{s=1}^N \theta_sJ_{M_s}x_s.
    \end{equation*}
    Since the sets $M_1,\ldots,M_N$ are pairwise disjoint, we have $\norm{x_\theta}_{c_0(X)}\leq 1$. Thus $\norm{Tx_\theta}_{\ell_1^N(X)}\leq\norm{T}$ for every choice of signs, and averaging gives
    \begin{equation*}
        \norm{T}\geq \mathbb E_\theta[\norm{Tx_\theta}_{\ell_1^N(X)}].
    \end{equation*}
    Since the norm on $\ell_1^N(X)$ is given by summing the coordinate norms, we have
    \begin{equation*}
        \mathbb E_\theta[\norm{Tx_\theta}_{\ell_1^N(X)}]=\sum_{s=1}^N \mathbb E_\theta[\norm{P_sTx_\theta}]\geq \sum_{s=1}^N \mathbb E_\theta[|x_s^*P_sTx_\theta|].
    \end{equation*}
    For each fixed $s$, since $|\theta_s|=1$, Jensen's inequality gives
    \begin{equation*}
        \mathbb E_\theta[|x_s^*P_sTx_\theta|]\geq \left|\mathbb E_\theta[\theta_s x_s^*P_sTx_\theta]\right|.
    \end{equation*}
    Moreover,
    \begin{equation*}
        \mathbb E_\theta[\theta_s x_s^*P_sTx_\theta]=\sum_{r=1}^N \mathbb E_\theta[\theta_s\theta_r]x_s^*P_sTJ_{M_r}x_r=x_s^*P_sTJ_{M_s}x_s,
    \end{equation*}
    because $\mathbb E_\theta[\theta_s\theta_r]=0$ for $s\neq r$, while $\mathbb E_\theta[\theta_s^2]=1$. Hence
    \begin{equation*}
        \mathbb E_\theta[|x_s^*P_sTx_\theta|]\geq |x_s^*P_sTJ_{M_s}x_s|>\varepsilon.
    \end{equation*}
    Therefore
    \begin{equation*}
        \norm{T}\geq \mathbb E_\theta[\norm{Tx_\theta}_{\ell_1^N(X)}]>N\varepsilon,
    \end{equation*}
    contradicting the assumption $N\varepsilon>\norm{T}$.

    The proof for $\ell_p(X)$, $1<p<\infty$, is identical, except that now $x_s\in B_{\ell_p(M_s,X)}$ and $\norm{x_\theta}_{\ell_p(X)}\leq N^{1/p}$. Hence
    \begin{equation*}
        \mathbb E_\theta[\norm{Tx_\theta}_{\ell_1^N(X)}]\leq N^{1/p}\norm{T}.
    \end{equation*}
    The same lower estimate gives $\mathbb E_\theta[\norm{Tx_\theta}_{\ell_1^N(X)}]>N\varepsilon$, contradicting $N^{(p-1)/p}\varepsilon>\norm{T}$. Finally, the case $\ell_\infty(X)$ is identical to the case $c_0(X)$, since disjoint supports give $\norm{x_\theta}_{\ell_\infty(X)}\leq 1$.
\end{proof}

Using this version of backward reduction, we can give a reduction to lower triangular operators. Recall that given two sequences $\mathbf{n} = (n_j)_{j\in\N}$ and $\mathbf{m} =(m_j)_{j\in\N}$ in $\N$, with $\mathbf{n}$ increasing, we use the notation $J_{\mathbf{n},\mathbf{m}}$ and $R_{\mathbf{n},\mathbf{m}}$ in the natural way, whereas $\widehat{J}_n: c_0(X) \to \ell_1(c_0(X))$ denotes the canonical inclusion into the outer $n$-th coordinate, as introduced in \Cref{subsec: C01forward}. 

\begin{proposition}[Reduction to lower triangular form on $\ell_1(E(X))$]\label{prop:lower-triangular-form-ell1}
Let $X$ be a Banach space and let $E$ be either $c_0$ or $\ell_p$, where $1<p \leq \infty$. Let
\begin{equation*}
    T\colon \ell_1(E(X))\to \ell_1(E(X))
\end{equation*}
be an operator. Then, for every $\varepsilon>0$, there exist increasing sequences $\mathbf{n}=(n_j)_{j\in\N}$ and $\mathbf{m}=(m_j)_{j\in\N}$ in $\N$, and a lower triangular operator $L\colon \ell_1(X)\to \ell_1(X)$, such that
\begin{equation*}
    \norm{R_{\mathbf{n},\mathbf{m}}TJ_{\mathbf{n},\mathbf{m}}-L} <\varepsilon.
\end{equation*}
\end{proposition}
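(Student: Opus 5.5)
The plan is to mirror the proof of \Cref{lmm:lp-c0-upper-reduction}, with rows and columns swapped. For $S=R_{\mathbf{n},\mathbf{m}}TJ_{\mathbf{n},\mathbf{m}}=(S_{i,j})$ it suffices to make the strictly upper triangular part small, that is, the entries $S_{i,j}$ with $i<j$. Then $L=S-U$, where $U$ is that strictly upper part, is lower triangular.

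Because the outer sum is $\ell_1$ in both domain and range, the norm of $U$ is controlled entrywise. For $x=(x_j)$ we have
\begin{equation*}
    \norm{Ux}\leq\sum_i\sum_{j>i}\norm{S_{i,j}}\norm{x_j}\leq\Bigl(\sum_i\sup_{j>i}\norm{S_{i,j}}\Bigr)\norm{x}.
\end{equation*}
So the target is to achieve $\norm{S_{i,j}}\leq\varepsilon_i:=\varepsilon/2^{i+1}$ for all $j>i$. Here $S_{i,j}=P_{(n_i,m_i)}TJ_{n_j,m_j}$, where $P_{(n,m)}$ denotes the coordinate projection onto the $(n,m)$ position.

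The difficulty is that when row $i$ is fixed, the later columns $(n_j,m_j)$, with arbitrarily large outer index $n_j$, are not yet known. One cannot choose each column merely to avoid the finitely many earlier rows, since a row operator $E(X)\to X$ can have all coordinate entries of norm bounded below (for example $\ell_2(\ell_2)\to\ell_2$). The freedom must come from choosing the row itself, and this is what \Cref{lmm:coordinate-compression-l1} provides.

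The recursion runs as follows. At stage $i$ we hold an infinite set $G_{i-1}\subseteq\N$ of admissible outer indices, all larger than $n_{i-1}$. For each $n\in G_{i-1}$ we also hold a nested infinite set $M_{i-1}(n)\subseteq\N$ of admissible inner indices, so that $M_{i-1}(n)\subseteq M_{i-2}(n)\subseteq\dots$. The inductive hypothesis is
\begin{equation*}
    \norm{P_{(n_k,m_k)}T\widehat J_nJ_{M_{i-1}(n)}}\leq\varepsilon_k \qquad \text{for all } k<i \text{ and all } n\in G_{i-1}.
\end{equation*}
Choose $N_i$ with $N_i^{(p-1)/p}\varepsilon_i>\norm{T}$ (or $N_i\varepsilon_i>\norm{T}$ if $E=c_0$ or $E=\ell_\infty$). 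Pick $N_i$ distinct candidates $f_1<\dots<f_{N_i}$ in $G_{i-1}$, and for each $f_s$ pick an inner index $\mu_s\in M_{i-1}(f_s)$ larger than $m_{i-1}$. Since the candidate rows $(f_s,\mu_s)$ lie in distinct outer coordinates, the map $Q=(P_{(f_s,\mu_s)})_{s=1}^{N_i}$ satisfies $\norm{Q\,y}_{\ell_1^{N_i}(X)}\leq\norm{y}$ for $y\in\ell_1(E(X))$.

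Now fix an outer index $n\in G_{i-1}$ exceeding all $f_s$. Apply \Cref{lmm:coordinate-compression-l1} to $QT\widehat J_nJ_{M_{i-1}(n)}\colon E(M_{i-1}(n),X)\to\ell_1^{N_i}(X)$, identifying $E(M_{i-1}(n),X)$ with $E(X)$; this operator has norm at most $\norm{T}$. It yields an index $s(n)$ and an infinite set $M_i(n)\subseteq M_{i-1}(n)$ with
\begin{equation*}
    \norm{P_{(f_{s(n)},\mu_{s(n)})}T\widehat J_nJ_{M_i(n)}}\leq\varepsilon_i.
\end{equation*}
By pigeonhole, some $s_0$ equals $s(n)$ for infinitely many $n$. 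Set $(n_i,m_i)=(f_{s_0},\mu_{s_0})$, and let $G_i$ be the infinite set of those $n$ with $s(n)=s_0$.

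This choice is consistent with the earlier rows. The new index $(n_i,m_i)$, viewed as a column, satisfies $n_i\in G_{i-1}$ and $m_i\in M_{i-1}(n_i)$, so all earlier rows $k<i$ are small on it. Every future column will be taken as $(n_j,m_j)$ with $n_j\in G_{j-1}\subseteq G_i$ and $m_j\in M_{j-1}(n_j)\subseteq M_i(n_j)$, so the new row $i$ is small on all of them. Choosing the $\mu_s$ large, and the $f_s$ in $G_{i-1}$ beyond $n_{i-1}$, makes both $\mathbf n$ and $\mathbf m$ increasing. The hypothesis for stage $i+1$ holds on $G_i$ with the sets $M_i(n)$.

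The final estimate then gives
\begin{equation*}
    \norm{S-L}\leq\sum_i\varepsilon_i<\varepsilon.
\end{equation*}
The main obstacle, and the point to check carefully, is the uniformity just described. The lemma produces $s(n)$ and $M_i(n)$ depending on the column's outer coordinate $n$. The pigeonhole step fixes a single row for infinitely many outer coordinates. The per-$n$ nesting of the inner sets keeps all previous constraints alive, so no intersection over different outer coordinates is ever needed.
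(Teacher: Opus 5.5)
Your proposal is correct and is essentially the paper's proof. Your candidate rows $(f_s,\mu_s)$ with $\mu_s\in M_{i-1}(f_s)$, the application of \Cref{lmm:coordinate-compression-l1} separately for each column outer index to the $\ell_1^{N_i}(X)$-valued compression, the pigeonhole choice of one row for infinitely many outer indices, and the inner sets nested separately for each outer index correspond exactly to the paper's $a_s$, $b_s$, $A_k$ and $M_j^k$, and the final entrywise $\ell_1$ estimate $\norm{S-L}\leq\sum_i\varepsilon/2^{i+1}$ is the same.
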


\begin{proof}
    We prove the case $E=c_0$; the case $E=\ell_p$, $1<p \leq \infty$, follows by the same argument, using the second part of \Cref{lmm:coordinate-compression-l1}. Let $T$ and $\varepsilon>0$ be fixed. For each $k\in\N$, choose $N_k\in\N$ such that
    \begin{equation*}
        N_k\varepsilon/2^{k+1}>\norm{T}.
    \end{equation*}

    We use the following convention. If $M\subseteq M'\subseteq\N$ are infinite, then $J_M\colon c_0(M,X)\to c_0(X)$ denotes the canonical inclusion, while $J_M^{M'}\colon c_0(M,X)\to c_0(M',X)$ denotes the canonical inclusion into $c_0(M',X)$.

    We recursively construct increasing sequences $(n_k)_{k\in\N}$ and $(m_k)_{k\in\N}$, infinite subsets
    \begin{equation*}
        \N=A_0\supseteq A_1\supseteq A_2\supseteq\ldots,
    \end{equation*}
    and, for each $k\in\N$ and each $j\in A_k$, infinite subsets $M_j^k\subseteq\N$, such that, for every $k\in\N$,
    \begin{equation*}
        n_k\in A_{k-1}, \qquad A_k\subseteq A_{k-1}\cap\{j\in\N:j>n_k\}, \qquad m_k>m_{k-1},
    \end{equation*}
    where $m_0=0$, and
    \begin{equation*}
        \norm{R_{n_i,m_i}T\widehat J_jJ_{M_j^k}}\leq \varepsilon/2^{i+1} \qquad (1\leq i\leq k,\ j\in A_k).
    \end{equation*}

    Suppose that the construction has been carried out up to stage $k-1$. Choose distinct elements $a_1<\ldots<a_{N_k}$ of $A_{k-1}$. If $k=1$, set $M_j^0=\N$ for every $j\in A_0$. For each $1\leq s\leq N_k$, choose
    \begin{equation*}
        b_s\in M_{a_s}^{k-1} \qquad\text{with}\qquad b_s>m_{k-1}.
    \end{equation*}
    Put
    \begin{equation*}
        B=\{j\in A_{k-1}:j>a_{N_k}\}.
    \end{equation*}
    For each $j\in B$, define $T_j\colon c_0(M_j^{k-1},X)\to \ell_1^{N_k}(X)$ by
    \begin{equation*}
        T_jx=\bigl(R_{a_s,b_s}T\widehat J_jJ_{M_j^{k-1}}x\bigr)_{s=1}^{N_k}.
    \end{equation*}
    We have $\norm{T_j}\leq\norm{T}$. Hence, by \Cref{lmm:coordinate-compression-l1}, applied after identifying $c_0(M_j^{k-1},X)$ with $c_0(X)$, there exist $1\leq s(j)\leq N_k$ and an infinite subset $L_j\subseteq M_j^{k-1}$ such that
    \begin{equation*}
        \norm{R_{a_{s(j)},b_{s(j)}}T\widehat J_jJ_{M_j^{k-1}}J_{L_j}^{M_j^{k-1}}}\leq \varepsilon/2^{k+1}.
    \end{equation*}
    Since $J_{M_j^{k-1}}J_{L_j}^{M_j^{k-1}}=J_{L_j}$, this is
    \begin{equation*}
        \norm{R_{a_{s(j)},b_{s(j)}}T\widehat J_jJ_{L_j}}\leq \varepsilon/2^{k+1}.
    \end{equation*}

    By the pigeonhole principle, we can find $1\leq s_k\leq N_k$ and an infinite subset $A_k\subseteq B$ such that $s(j)=s_k$ for every $j\in A_k$. Define
    \begin{equation*}
        n_k=a_{s_k}, \qquad m_k=b_{s_k}.
    \end{equation*}
    For $j\in A_k$, define $M_j^k=L_j$. Since $M_j^k\subseteq M_j^{k-1}$, all estimates obtained at earlier stages are preserved. The estimate for the new row follows from the choice of $A_k$. This completes the recursive construction.

    Define $S: \ell_1(X) \to \ell_1(X)$ by
    \begin{equation*}
        S=R_{\mathbf{n},\mathbf{m}}TJ_{\mathbf{n},\mathbf{m}}.
    \end{equation*}
    Write $S=(S_{i,j})_{i,j\in\N}$ with respect to the natural decomposition of $\ell_1(X)$. For each $i,j \in \N$ with $i<j$, then $n_j \in A_i$ and $m_j \in M_{n_j}^i$. Hence
    \begin{equation*}
        \norm{S_{i,j}}=\norm{R_{n_i,m_i}TJ_{n_j,m_j}}\leq \varepsilon/2^{i+1}.
    \end{equation*}
    Define $H\colon \ell_1(X)\to\ell_1(X)$, the upper triangular part of $S$, by
    \begin{equation*}
        P_iHx=\sum_{j>i}S_{i,j}x_j \qquad (x=(x_j)_{j\in\N}\in\ell_1(X)).
    \end{equation*}
    Then, for every $x\in\ell_1(X)$, we have
    \begin{equation*}
        \norm{Hx}_{\ell_1(X)}\leq \sum_{i=1}^\infty\sum_{j>i} \varepsilon/2^{i+1}\norm{x_j}\leq \left(\sum_{i=1}^\infty \varepsilon/2^{i+1}\right)\norm{x}_{\ell_1(X)} = (\varepsilon/2)\norm{x}_{\ell_1(X)}.
    \end{equation*}
    Thus $\norm{H}<\varepsilon$. Therefore, if we let $L=S-H$, then $L$ is lower triangular and
    \begin{equation*}
        \norm{R_{\mathbf{n},\mathbf{m}}TJ_{\mathbf{n},\mathbf{m}}-L}=\norm{H}<\varepsilon.
    \end{equation*}

    In the case $E=\ell_p$, $1<p\leq \infty$, the same proof applies with $c_0(M,X)$ replaced by $\ell_p(M,X)$ throughout, and with $N_k$ chosen so that
    \begin{equation*}
        N_k^{(p-1)/p}\varepsilon/2^{k+1}>\norm{T}
    \end{equation*}
    in the cases $1 < p < \infty$.
\end{proof}

%% file: 3-1_l_infty.tex
\section{Technical results for \texorpdfstring{$\ell_\infty$-sums}{l-infinity}}\label{sec: l-infty}

In the previous sections, we exploited the fact that the operator matrix of $T\colon \allowbreak \ell_p(C(K)) \allowbreak \to \ell_p(C(K))$ encodes all the relevant information about $T$, whenever $1\leq p<\infty$. Even though this is no longer true when $p=\infty$, we will show that this operator matrix is enough for our purposes. Before stating the reduction to diagonal operators, we record the following lemma, which is the counterpart to \Cref{lmm: preliminary-upper-triangular} for the setting $p=\infty$.

\begin{lemma}[Forward reduction]\label{lmm: linfty-forward}
    Let $K$ be either $[0,1]$ or an ordinal interval $[0,\alpha]$ with characteristic system $(\eta,1)$. Let $T\colon \ell_\infty(C(K))\to \ell_\infty(C(K))$ be an operator with operator matrix $T=(T_{n,m})_{n,m\in\N}$. Then, for every $\varepsilon>0$, there exist $n_0\in\N$, an infinite subset $M=\{m_j:j\in\N\}\subseteq\N$, and closed subsets $(A_{m_j})_{j\in\N}$ of $K$, each homeomorphic to $K$, such that
    \begin{equation*}
        \norm{R_{A_{m_j}}T_{m_j,n_0}}\leq\varepsilon \qquad \text{for all } j\in\N.
    \end{equation*}
\end{lemma}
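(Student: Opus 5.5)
The plan is to transpose the argument of \Cref{lmm: lower-triangular} from rows to a single row with finitely many columns, and then use the pigeonhole principle over the rows. Fix $\varepsilon>0$ and choose $N\in\N$ with $N\varepsilon>\norm{T}$. The point is that on the finitely many columns $1,\dots,N$, the operator $T$ does act according to its matrix. Indeed, if $x=(x_1,\dots,x_N,0,0,\dots)$ is finitely supported, then $x=\sum_{n\le N}J_nx_n$. Hence, for every row $m$,
\begin{equation*}
    P_mTx=\sum_{n=1}^N T_{m,n}x_n .
\end{equation*}
The pathology of $\ell_\infty$-sums illustrated by the Banach limit example therefore plays no role here.

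Fix a row $m\in\N$. For each $1\le n\le N$ define
\begin{equation*}
    A_{m,n}=\Bigl\{a\in K:\sup_{f\in B_{C(K)}}|(T_{m,n}f)(a)|\le\varepsilon\Bigr\}.
\end{equation*}
Each $A_{m,n}$ is closed, being an intersection of the closed sets $\{a:|(T_{m,n}f)(a)|\le\varepsilon\}$.

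I claim that $\bigcup_{n=1}^N A_{m,n}=K$. Suppose instead that some $a\in K$ lies outside every $A_{m,n}$. Then for each $n\le N$ there is $f_n\in B_{C(K)}$ with $|(T_{m,n}f_n)(a)|>\varepsilon$. Multiplying $f_n$ by a unimodular scalar, we may assume that $(T_{m,n}f_n)(a)>\varepsilon$. Put $x=\sum_{n\le N}J_nf_n$, so that $\norm{x}_{\ell_\infty(C(K))}\le 1$. Then
\begin{equation*}
    N\varepsilon<\sum_{n=1}^N (T_{m,n}f_n)(a)=(P_mTx)(a)\le\norm{T},
\end{equation*}
which contradicts the choice of $N$. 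This step uses only that disjointly supported vectors in $\ell_\infty$ have norm equal to the maximum of their norms.

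With the cover in hand, I apply topological indivisibility. If $K=[0,\alpha]$ with characteristic system $(\eta,1)$, then \Cref{lmm: indivisibility-compact-subsets} gives some $n(m)\le N$ such that $A_m:=A_{m,n(m)}$ is homeomorphic to $K$. If $K=[0,1]$, then \Cref{lmm: countable-top-indiv}, applied to the finite cover, gives $n(m)$ and a closed interval $A_m\subseteq A_{m,n(m)}$ homeomorphic to $[0,1]$. In both cases $\norm{R_{A_m}T_{m,n(m)}}\le\varepsilon$, because the pointwise bound defining $A_{m,n(m)}$ passes to closed subsets.

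Finally, the map $m\mapsto n(m)$ takes values in the finite set $\{1,\dots,N\}$. By the pigeonhole principle there is $n_0\le N$ such that $M=\{m:n(m)=n_0\}$ is infinite. Enumerating $M=\{m_j:j\in\N\}$ and setting $A_{m_j}=A_{m_j}$ from the previous step gives the statement of the lemma.

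I do not expect a serious obstacle. The only delicate point is justifying the matrix action, and this holds precisely because we restrict to finitely many columns. This is also why the conclusion is stated for a single column $n_0$, rather than for a whole set of columns as in \Cref{cor: preliminary-upper-triangular}.
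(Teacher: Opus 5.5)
Your proof is correct and is essentially the paper's argument. It uses the same choice $N\varepsilon>\norm{T}$, the same closed sets $\{a\in K:\sup_{f\in B_{C(K)}}|(T_{m,n}f)(a)|\le\varepsilon\}$, the same evaluation at a point of a disjointly supported vector on the first $N$ columns, and the same indivisibility lemmas. The only difference is cosmetic: you show that every row $m$ admits some $n(m)\le N$ and then apply the pigeonhole principle, whereas the paper assumes each $M_n=\{m:B(n,m)\text{ contains a closed copy of }K\}$ is finite and derives a contradiction from a row $m_0$ lying outside all of them.
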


\begin{proof}
    Let $T$ and $\varepsilon>0$ be fixed. Choose $N\in\N$ so that $N>\norm{T}/\varepsilon$. For each $n,m\in\N$ define
    \begin{equation*}
        B(n,m)=\{a\in K:\sup_{f\in B_{C(K)}}|T_{m,n}f(a)|\leq\varepsilon\},
    \end{equation*}
    which is a closed subset of $K$. Let
    \begin{equation*}
        M_n=\{m\in\N:B(n,m)\text{ contains a closed subset homeomorphic to }K\}.
    \end{equation*}
    We claim that there exists $1\leq n_0\leq N$ such that $M_{n_0}$ is infinite. Indeed, suppose by contradiction that $M_n$ is finite for $n=1,\ldots,N$. Then we can find
    \begin{equation*}
        m_0\in\N\setminus\bigcup_{n=1}^N M_n.
    \end{equation*}
    By definition, none of the closed sets $B(n,m_0)$ contains a closed subset homeomorphic to $K$. Hence, by \Cref{lmm: indivisibility-compact-subsets} in the ordinal case, and by \Cref{lmm: countable-top-indiv} in the case $K=[0,1]$, we must have
    \begin{equation*}
        \bigcup_{n=1}^N B(n,m_0)\subsetneq K.
    \end{equation*}
    Choose $a\in K\setminus\bigcup_{n=1}^N B(n,m_0)$. By definition, for each $n=1,\ldots,N$ we can find a norm-one function $f_n\in C(K)$ such that $T_{m_0,n}f_n(a)>\varepsilon$. Let $f=\sum_{n=1}^N J_nf_n$, so that $\norm{f}=1$ since $J_1 f_1,\ldots,J_N f_N$ have disjoint supports. It follows that
    \begin{equation*}
        \varepsilon N<\sum_{n=1}^N T_{m_0,n}f_n(a)= P_{m_0} Tf(a)\leq\norm{Tf}\leq\norm{T}\norm{f}=\norm{T},
    \end{equation*}
    which contradicts our choice of $N$. This proves the claim.

    Choose $1\leq n_0 \leq N$ such that $M_{n_0}$ is infinite, and write $M_{n_0}=\{m_j:j\in\N\}$. For each $j\in\N$, choose a closed subset $A_{m_j}\subseteq B(n_0,m_j)$ homeomorphic to $K$. Since $A_{m_j}\subseteq B(n_0,m_j)$, we have
    \begin{equation*}
        \norm{R_{A_{m_j}}T_{m_j,n_0}}\leq\varepsilon \qquad \text{for all } j\in\N.
    \end{equation*}
    This completes the proof.
\end{proof}

\begin{remark}\label{rem: lemma-works}
    The proof of \Cref{lmm: linfty-forward} remains valid for operators
    \begin{equation*}
        T\colon \ell_\infty(C(K))\to\left(\bigoplus_{m\in\N}C(A_{m})\right)_{\ell_\infty},
    \end{equation*}
    provided that each $A_{m}$ is homeomorphic to $K$. In that case, one obtains an infinite set $M=\{m_j:j\in\N\}$ and closed subsets $B_{m_j}\subseteq A_{m_j}$, each homeomorphic to $K$, such that the corresponding restricted coordinate operators are small.
\end{remark}

Using the previous lemma and remark, together with a standard inductive construction, we obtain the following.

\begin{lemma}[Weak upper triangular reduction]\label{lmm: weak-upper}
    Let $K$ be either $[0,1]$ or an ordinal interval $[0,\alpha]$ with characteristic system $(\eta,1)$. Let $T\colon \ell_\infty(C(K))\to \ell_\infty(C(K))$ be an operator with operator matrix $T=(T_{n,m})_{n,m\in\N}$. Then, for every $\varepsilon>0$, there exist a strictly increasing sequence $(m_j)_{j\in\N}$ in $\N$ and closed subsets $(A_{m_j})_{j\in\N}$ of $K$, each homeomorphic to $K$, such that
    \begin{equation*}
        \norm{R_{A_{m_j}}T_{m_j,m_i}}\leq \varepsilon/2^i \qquad \text{for all } i<j.
    \end{equation*}
\end{lemma}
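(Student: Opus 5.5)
We will build the sequence $(m_j)_{j\in\N}$ and the sets $A_{m_j}$ by a diagonal induction, invoking \Cref{lmm: linfty-forward} at the first step and \Cref{rem: lemma-works} at every later step. The plan is to fix column $m_i$ at stage $i$ and then shrink both the remaining row set and the sets attached to those rows. After that, column $m_i$ is $\varepsilon/2^i$-small, after restriction, on all rows chosen later.

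\emph{Stage 1.} Apply \Cref{lmm: linfty-forward} to $T$ with tolerance $\varepsilon/2$. This gives $n_0$, an infinite set $M_1'$ and closed sets $A^{(1)}_m\subseteq K$ homeomorphic to $K$ ($m\in M_1'$) with $\norm{R_{A^{(1)}_m}T_{m,n_0}}\leq\varepsilon/2$. Set $m_1=n_0$ and $M_1=M_1'\setminus\{1,\dots,m_1\}$, which is still infinite.

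\emph{Stage $k+1$.} Suppose we have $m_1<\dots<m_k$, an infinite set $M_k\subseteq\{m>m_k\}$, and closed sets $A^{(k)}_m\subseteq K$ homeomorphic to $K$ for $m\in M_k$, such that
\begin{equation*}
\norm{R_{A^{(k)}_m}T_{m,m_i}}\leq\varepsilon/2^i\qquad (i\leq k,\ m\in M_k).
\end{equation*}
We want the next column index to lie in $M_k$. The forward lemma, however, chooses the column $n_0$ itself from a finite range $\{1,\dots,N\}$. To force $n_0\in M_k$, we precompose: relabel $M_k=\{\mu_1<\mu_2<\dots\}$ and consider
\begin{equation*}
T' = \diag(R_{A^{(k)}_{\mu_r}}:r\in\N)\, P_{M_k} T J_{M_k}\colon \ell_\infty(C(K))\to \Bigl(\bigoplus_{r}C(A^{(k)}_{\mu_r})\Bigr)_{\ell_\infty},
\end{equation*}
with $\ell_\infty(M_k,C(K))$ identified with $\ell_\infty(C(K))$. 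Then $\norm{T'}\leq\norm{T}$, and its matrix entries are $R_{A^{(k)}_{\mu_r}}T_{\mu_r,\mu_s}$. Applying \Cref{rem: lemma-works} to $T'$ with tolerance $\varepsilon/2^{k+1}$ gives a column $s_0$, an infinite set of rows $\{\mu_r:r\in R\}$, and closed sets $A^{(k+1)}_{\mu_r}\subseteq A^{(k)}_{\mu_r}$ homeomorphic to $K$ such that $\norm{R_{A^{(k+1)}_{\mu_r}}T_{\mu_r,\mu_{s_0}}}\leq\varepsilon/2^{k+1}$. Set $m_{k+1}=\mu_{s_0}$ and $A_{m_{k+1}}=A^{(k)}_{m_{k+1}}$. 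Then define $M_{k+1}=\{\mu_r:r\in R\}\setminus\{1,\dots,m_{k+1}\}$, keeping the new sets $A^{(k+1)}_m$ for $m\in M_{k+1}$. Restricting to a smaller set does not increase the norm of $R_A T_{m,m_i}$, so the earlier estimates for columns $m_1,\dots,m_k$ survive the shrinking from $A^{(k)}_m$ to $A^{(k+1)}_m$.

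\emph{Conclusion.} For $i<j$ we have $m_j\in M_{j-1}\subseteq M_i$, and $A_{m_j}=A^{(j-1)}_{m_j}\subseteq A^{(i)}_{m_j}$. Hence $\norm{R_{A_{m_j}}T_{m_j,m_i}}\leq\varepsilon/2^i$. The sequence $(m_j)$ is strictly increasing because $M_k\subseteq\{m>m_k\}$. For $j=1$ we simply take $A_{m_1}=K$.

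The main obstacle is making sure \Cref{rem: lemma-works} really applies to $T'$, whose codomain is a sum of spaces $C(A_m)$ rather than $C(K)$. The proof of \Cref{lmm: linfty-forward} only evaluates at points, builds the disjoint sum $f=\sum J_nf_n$ in the domain, and uses indivisibility inside each $A^{(k)}_m$. Each $A^{(k)}_m$ is homeomorphic to $K$, and closed subsets homeomorphic to $K$ pull back along that homeomorphism, so the argument goes through. The other delicate point is the bookkeeping: after shrinking a set, we keep its new version only for the indices that are still unchosen.
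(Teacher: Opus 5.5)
Your proposal is correct and follows essentially the same recursive construction as the paper. At each stage you apply \Cref{lmm: linfty-forward} (via \Cref{rem: lemma-works}) to the compression of $T$ to the remaining infinite index set, with ranges restricted to the current sets $A^{(k)}_m$, shrink those sets, and finally take $A_{m_j}=A^{(j-1)}_{m_j}$. The differences are cosmetic: you discard $\{1,\dots,m_{k+1}\}$ at the end of a stage rather than at the start of the next, and you carry all earlier column estimates in the inductive hypothesis instead of relying on nestedness at the end.
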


\begin{proof}
    Fix $\varepsilon>0$. For each $n\in\N$, set $A_n^0=K$.

    We construct recursively a strictly increasing sequence $m_1<m_2<\ldots$, nested infinite sets
    \begin{equation*}
        \N=M_0\supseteq M_1\supseteq M_2\supseteq \ldots,
    \end{equation*}
    and, for each $k\in\N$ and each $n\in M_k$, a closed subset $A_n^k$ of $K$, homeomorphic to $K$, such that
    \begin{equation*}
        m_k\in M_{k-1}, \qquad m_k<m_{k+1},
    \end{equation*}
    \begin{equation*}
        A_n^k\subseteq A_n^{k-1} \qquad \text{for all } k\in\N \text{ and all } n\in M_k,
    \end{equation*}
    and
    \begin{equation*}
        \norm{R_{A_n^k}T_{n,m_k}}\leq \varepsilon/2^k \qquad \text{for all } k\in\N \text{ and all } n\in M_k.
    \end{equation*}

    To begin, apply \Cref{lmm: linfty-forward} to $T$ with $\varepsilon/2$. We obtain $m_1\in\N$, an infinite set $M_1\subseteq\N$, and, for each $n\in M_1$, a closed subset $A_n^1$ of $K$, homeomorphic to $K$, such that
    \begin{equation*}
        \norm{R_{A_n^1}T_{n,m_1}}\leq \varepsilon/2 \qquad \text{for all } n\in M_1.
    \end{equation*}

    Suppose now that, for some $k\in\N$, we have already constructed $m_1<\ldots<m_k$, infinite sets
    \begin{equation*}
        \N=M_0\supseteq M_1\supseteq \ldots \supseteq M_k,
    \end{equation*}
    and closed sets $A_n^i$ for $i=1,\ldots,k$ and $n\in M_i$, satisfying the above conditions.

    Since $M_k$ is infinite, the set
    \begin{equation*}
        M_k'=M_k\cap\{n\in\N:n>m_k\}
    \end{equation*}
    is infinite. Identify $M_k'$ with $\N$ via the increasing enumeration, and apply \Cref{lmm: linfty-forward} to the corresponding compression of $T$ to $M_k'$. By \Cref{rem: lemma-works}, we may do so with the range at coordinate $n\in M_k'$ restricted to $A_n^k$. We obtain $m_{k+1}\in M_k'$, an infinite set $M_{k+1}\subseteq M_k'$, and, for each $n\in M_{k+1}$, a closed subset $A_n^{k+1}$ of $A_n^k$, homeomorphic to $K$, such that
    \begin{equation*}
        \norm{R_{A_n^{k+1}}T_{n,m_{k+1}}}\leq \varepsilon/2^{k+1} \qquad \text{for all } n\in M_{k+1}.
    \end{equation*}
    This completes the recursive construction.

    Finally, set
    \begin{equation*}
        M=\{m_j:j\in\N\}, \qquad A_{m_j}=A_{m_j}^{j-1} \qquad \text{for all } j\in\N.
    \end{equation*}
    This is well defined because $m_j\in M_{j-1}$.

    Let $1\leq i<j$. Since $m_j\in M_{j-1}\subseteq M_i$, the set $A_{m_j}^i$ is defined and, by construction, $A_{m_j}=A_{m_j}^{j-1}\subseteq A_{m_j}^i$. It follows that
    \begin{equation*}
        \norm{R_{A_{m_j}}T_{m_j,m_i}}\leq \norm{R_{A_{m_j}^i}T_{m_j,m_i}}\leq \varepsilon/2^i.
    \end{equation*}
    This proves the result.
\end{proof}

\begin{lemma}[Lower triangular to diagonal reduction]\label{lmm: lower-triangular-reduction}
    Let $K$ be either $[0,1]$ or an ordinal interval $[0,\alpha]$ with characteristic system $(\eta,1)$ for some ordinal $\eta$. Then, for every lower triangular operator
    \begin{equation*}
        L\colon \ell_\infty(C(K))\to \ell_\infty(C(K)), \qquad \text{with operator matrix } L=(L_{n,m})_{n,m\in\N},
    \end{equation*}
    and every $\varepsilon>0$, there exist an infinite subset $M=\{m_j:j\in\N\}\subseteq\N$ and a family $(A_{m_j})_{j\in\N}$ of closed subsets of $K$, each homeomorphic to $K$, such that
    \begin{equation*}
        \norm{R P_M L J_M-D}\leq\varepsilon,
    \end{equation*}
    where
    \begin{equation*}
        R\colon \ell_\infty(M,C(K))\to \left(\bigoplus_{j\in\N}C(A_{m_j})\right)_{\ell_\infty}
    \end{equation*}
    is the diagonal operator defined by
    \begin{equation*}
        R=\diag(R_{A_{m_j}}:j\in\N),
    \end{equation*}
    and
    \begin{equation*}
        D\colon \ell_\infty(M,C(K))\to \left(\bigoplus_{j\in\N}C(A_{m_j})\right)_{\ell_\infty}
    \end{equation*}
    is the diagonal operator defined by
    \begin{equation*}
        D=\diag(R_{A_{m_j}}L_{m_j,m_j}:j\in\N).
    \end{equation*}
\end{lemma}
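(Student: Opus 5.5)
The plan is to apply \Cref{lmm: weak-upper} to $L$ and exploit the fact that $L$ genuinely acts according to its matrix (\Cref{def: lower-triangular-special}). Fix $\varepsilon>0$ and apply \Cref{lmm: weak-upper} to $L$ with $\varepsilon$ replaced by $\varepsilon/2$. This gives a strictly increasing sequence $(m_j)_{j\in\N}$ and closed sets $A_{m_j}\subseteq K$, each homeomorphic to $K$, such that
\begin{equation*}
    \norm{R_{A_{m_j}}L_{m_j,m_i}}\leq \varepsilon/2^{i+1} \qquad \text{for all } i<j.
\end{equation*}
Set $M=\{m_j:j\in\N\}$, and define $R$ and $D$ as in the statement. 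Since $\norm{R_{A_{m_j}}L_{m_j,m_j}}\leq\norm{L}$, the operator $D$ is a bounded diagonal operator.

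Next I compute $RP_MLJ_M-D$ coordinatewise. For $x=(x_{m_i})_{i\in\N}\in\ell_\infty(M,C(K))$, the element $J_Mx$ has zero entries outside $M$. Because $L$ acts according to its lower triangular matrix, we have
\begin{equation*}
    P_{m_j}LJ_Mx=\sum_{i\leq j}L_{m_j,m_i}x_{m_i},
\end{equation*}
which is a finite sum. This is exactly where the strengthened notion of lower triangularity is used: it rules out contributions from the "tail at infinity", such as those produced by a Banach limit. Hence the $j$-th coordinate of $(RP_MLJ_M-D)x$ equals
\begin{equation*}
    \sum_{i<j}R_{A_{m_j}}L_{m_j,m_i}x_{m_i}.
\end{equation*}
Its norm is at most $\sum_{i<j}\varepsilon 2^{-i-1}\norm{x}\leq(\varepsilon/2)\norm{x}$. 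Taking the supremum over $j$ gives $\norm{RP_MLJ_M-D}\leq\varepsilon/2\leq\varepsilon$.

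The main point requiring care is not the estimate but the legitimacy of the application of \Cref{lmm: weak-upper}. That lemma, together with \Cref{lmm: linfty-forward}, only uses the matrix entries $T_{n,m}=P_nTJ_m$ and finitely supported test vectors $f=\sum_{n\le N}J_nf_n$, and in that setting the identity $P_{m_0}Tf=\sum_n T_{m_0,n}f_n$ holds for any operator. So the lemma applies to $L$ directly, with no further adjustment. The only place where \Cref{def: lower-triangular-special} is essential is the coordinate formula above, where $x$ may have infinite support.

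One further detail to check: the lemma controls $\norm{R_{A_{m_j}}L_{m_j,m_i}}$ only for $i<j$, and this is exactly the range of indices appearing in the off-diagonal sum. No additional row-wise control is therefore needed.
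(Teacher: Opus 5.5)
Your proof is correct and follows the paper's argument: apply \Cref{lmm: weak-upper} to $L$, bound each row of $RP_MLJ_M-D$ by $\sum_{i<j}\norm{R_{A_{m_j}}L_{m_j,m_i}}$, and take the supremum over rows. Your explicit coordinate formula shows where the paper uses that $L$ acts according to its matrix; in the paper this is implicit in the step $\norm{Q_{m_i}W}\leq\sum_{j<i}\norm{Q_{m_i}WJ_{m_j}}$. Applying the lemma with $\varepsilon/2$ instead of $\varepsilon$ is harmless.
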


\begin{proof}
    Suppose that $L$ is lower triangular and let $\varepsilon>0$ be fixed. By \Cref{lmm: weak-upper}, we can find an infinite set $M=\{m_j:j\in\N\}\subseteq\N$ and closed subsets $(A_{m_j})_{j\in\N}$ of $K$, each homeomorphic to $K$, such that
    \begin{equation*}
        \norm{R_{A_{m_i}}L_{m_i,m_j}}\leq\varepsilon/2^j \qquad \text{for all } j<i.
    \end{equation*}
    We claim that defining $R$ and $D$ as in the statement works. To this end, set
    \begin{equation*}
        W=R P_M L J_M-D.
    \end{equation*}
    Then $W$ is lower triangular, since $L$ is lower triangular and $D$ is diagonal. Moreover, for each $i\in\N$, let
    \begin{equation*}
        Q_{m_i}\colon \left(\bigoplus_{j\in\N}C(A_{m_j})\right)_{\ell_\infty}\to C(A_{m_i})
    \end{equation*}
    denote the canonical projection onto the $i$-th coordinate. Since $W$ is lower triangular and $Q_{m_i}WJ_{m_i}=0$ for every $i\in\N$, it follows that, for each fixed $i\in\N$,
    \begin{equation*}
        \norm{Q_{m_i}W}\leq\sum_{j<i}\norm{Q_{m_i}WJ_{m_j}}=\sum_{j<i}\norm{R_{A_{m_i}}L_{m_i,m_j}}\leq\sum_{j<i}\varepsilon/2^j<\varepsilon.
    \end{equation*}
    Therefore,
    \begin{equation*}
        \norm{W}=\sup_{i\in\N}\norm{Q_{m_i}W}\leq\varepsilon,
    \end{equation*}
    which completes the proof.
\end{proof}

\begin{remark}
    This reduction also holds in the case $1<p<\infty$, with an almost identical proof, and one could in fact use it to give a unified treatment of the cases $1<p\leq\infty$. However, the case $p=\infty$ is slightly more delicate: one must take care with the identification of an operator and its matrix, and \Cref{lmm: weak-upper} requires an additional projection onto the sets $A_{m_j}$, whereas its counterpart, \Cref{lmm: lemma-upper-triangular-1}, does not. For the sake of clarity, we have therefore chosen to treat these cases separately.
\end{remark}

We can finally state the dichotomy for lower triangular operators.

\begin{proposition}[Dichotomy for lower triangular operators]\label{prop: lower-triangular-dichotomy}
    Let $K$ be either $[0,1]$ or an ordinal interval $[0,\alpha]$ with characteristic system $(\eta,1)$, and suppose that $C(K)$ has the $C$-PFP for some constant $C\geq 1$. Then, for every lower triangular operator $L\colon \ell_\infty(C(K))\to \ell_\infty(C(K))$, either $L$ or $I_{\ell_\infty(C(K))}-L$ factors the identity on $\ell_\infty(C(K))$ with constant $2C$.
\end{proposition}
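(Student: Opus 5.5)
The plan mirrors the proof of \Cref{prop: upper-triangular-dichotomy}, with \Cref{lmm: lower-triangular-reduction} in place of \Cref{prop: lower-triangular}. Fix a lower triangular $L$ and choose $\delta>0$ with $C\delta<1$ and $C/(1-C\delta)<2C$. Apply \Cref{lmm: lower-triangular-reduction} with $\delta$ to get $M=\{m_j:j\in\N\}$ and closed sets $A_{m_j}\subseteq K$ homeomorphic to $K$ such that $\norm{RP_MLJ_M-D}\leq\delta$ (shrink $\delta$ slightly so the inequality is strict). Here $D=\diag(R_{A_{m_j}}L_{m_j,m_j})$.

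Next, transport back to $\ell_\infty(C(K))$. Let $V_j\colon C(K)\to C(A_{m_j})$ be the isometries induced by homeomorphisms, and let $S_j$ be the norm-one extension operators given by \Cref{prop: simultaneous-extension} or \Cref{thm: borsuk-dugundji}. Put $V=\diag(V_j)$ and $S=\diag(S_j)$; these are genuine diagonal operators on the $\ell_\infty$-sums, defined coordinatewise. Set
\[
W=V^{-1}RP_MLJ_MSV \qquad\text{and}\qquad D'=V^{-1}DSV=\diag(D_j),
\]
where $D_j=V_j^{-1}R_{A_{m_j}}L_{m_j,m_j}S_jV_j$. Then $\norm{W-D'}<\delta$.

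The only delicate point is that $D'$ must act coordinatewise, not merely have a diagonal matrix. This holds because $D$ is defined as an honest diagonal operator $\diag(\cdot)$, and compositions of honest diagonal operators are honest diagonal operators. No property of $W$ beyond its norm distance to $D'$ is needed, so the Banach-limit pathology does not enter.

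Now apply the $C$-PFP of $C(K)$ to each $D_j$. By pigeonhole, pass to an infinite subset of $M$ on which the same alternative holds for every $j$, relabelling and restricting all the diagonal operators accordingly; the estimate $\norm{W-D'}<\delta$ survives compression by $P_{M'}$ and $J_{M'}$. Suppose first that $\Phi_jD_j\Psi_j=I_{C(K)}$ for all $j$, with $\norm{\Phi_j},\norm{\Psi_j}\leq\sqrt C$. Set $\Phi=\diag(\Phi_j)$ and $\Psi=\diag(\Psi_j)$, which are bounded on $\ell_\infty(C(K))$. Then $\Phi D'\Psi=I$ and
\[
\Phi W\Psi=I+E, \qquad \norm{E}\leq C\delta<1.
\]
A Neumann series inverts $I+E$, so $I=(I+E)^{-1}\Phi W\Psi$, giving a factorisation constant at most $C/(1-C\delta)<2C$. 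Since $W=(V^{-1}RP_M)L(J_MSV)$ with both outer factors of norm at most one, the identity factors through $L$ with constant $2C$.

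In the other alternative, $I_{C(K)}-D_j$ factors the identity for every $j$. Note that
\[
I-W=V^{-1}RP_M(I-L)J_MSV,
\]
because $V_j^{-1}R_{A_{m_j}}S_jV_j=I_{C(K)}$ coordinatewise and these are honest diagonal operators. Also $\norm{(I-W)-(I-D')}<\delta$. The same perturbation argument then shows that $I-L$ factors the identity with constant $2C$.

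The main obstacle is checking that every $\ell_\infty$-step uses only honest diagonal operators. The remaining steps are routine.
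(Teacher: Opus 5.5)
Your proposal is correct and follows the paper's own route. The paper only says the proof is identical to that of \Cref{prop: upper-triangular-dichotomy}, with \Cref{lmm: lower-triangular-reduction} replacing \Cref{prop: lower-triangular}, and omits the details. You have supplied those details correctly, including the points specific to $\ell_\infty$: that $V$, $S$, $D$, $\Phi$, $\Psi$ are genuine coordinatewise diagonal operators, and that $V^{-1}RP_MJ_MSV=I$, which makes $I-W$ a norm-one compression of $I-L$.
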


\begin{proof}
    The proof is identical to that of \Cref{prop: upper-triangular-dichotomy}, replacing \Cref{prop: lower-triangular} with \Cref{lmm: lower-triangular-reduction}. Indeed, by \Cref{lmm: lower-triangular-reduction}, every lower triangular operator is, after passing to suitable complemented copies of the space, an arbitrarily small perturbation of a diagonal operator. The same diagonal UPFP argument and perturbation argument therefore yield the conclusion. We omit the details.
\end{proof}

%% file: 4_proof_main_theorems.tex
\section{Proof of main theorems}\label{sec: main-theorems}

We now turn to the proof of our main results.

\begin{proof}[Proof of \Cref{th: main-theorem}]
    Let $\alpha$ be any ordinal and note that by \Cref{th: Baker}, we have
    \begin{equation*}
        [0,\alpha] \cong [0,\omega^\eta \cdot n]
    \end{equation*}
    for some ordinal $\eta$ and some $n \in \N$. Consequently,
    \begin{equation*}
        \ell_p(C(\alpha)) \simeq \ell_p(C(\omega^\eta \cdot n)) \simeq \ell_p(C(\omega^\eta)),
    \end{equation*}
    so, without loss of generality, we may assume that $\alpha = \omega^\eta$. In particular, $[0,\alpha]$ has characteristic system $(\eta,1)$. Let such an $\alpha$ be fixed from now on. Further, observe that by \Cref{th: alspach-benyamini}, the space $C(\alpha)$ has the UPFP. In particular, it has the $C$-PFP for some constant $C \geq 1$. 
    
    We start by considering the case $1 < p < \infty$. Let $T \colon \ell_p(C(\alpha)) \to \ell_p(C(\alpha))$ be an operator, represented as a matrix $T = (T_{n,m})_{n,m \in \N}$. Choose $\varepsilon > 0$ small enough so that $2C \varepsilon < 1/2$.
    
    By \Cref{prop: alpha-to-ellp-is-compact}, every operator from $C(\alpha)$ to $\ell_p$ is compact. Hence, by \Cref{prop: prop-upper-triangular}, we can find an infinite subset $M \subseteq \N$ such that
    \begin{equation*}
        P_{M} T J_{M} = U + E,
    \end{equation*}
    where $U$ is upper triangular and $\norm{E} < \varepsilon$.

    By \Cref{prop: upper-triangular-dichotomy}, after identifying $\ell_p(M,C(\alpha))$ canonically with $\ell_p(C(\alpha))$, there exist operators $\Psi$ and $\Phi$ with $\norm{\Psi}\norm{\Phi} \leq 2C$ such that either $\Psi U \Phi = I_{\ell_p(C(\alpha))}$ or $\Psi(I_{\ell_p(C(\alpha))} - U)\Phi = I_{\ell_p(C(\alpha))}$.

    Suppose first that $\Psi U \Phi = I_{\ell_p(C(\alpha))}$. Then
    \begin{equation*}
        \Psi P_{M} T J_{M} \Phi = \Psi(U+E)\Phi = I_{\ell_p(C(\alpha))} + \Psi E \Phi.
    \end{equation*}
    Since
    \begin{equation*}
        \norm{\Psi E \Phi} \leq \norm{\Psi}\norm{E}\norm{\Phi} < 2C\varepsilon < 1/2,
    \end{equation*}
    the operator $I_{\ell_p(C(\alpha))} + \Psi E \Phi$ is invertible, and
    \begin{equation*}
        \norm{(I_{\ell_p(C(\alpha))} + \Psi E \Phi)^{-1}}
        \leq
        \frac{1}{1-\norm{\Psi E \Phi}}
        <
        2.
    \end{equation*}
    Therefore,
    \begin{equation*}
        I_{\ell_p(C(\alpha))}
        =
        (I_{\ell_p(C(\alpha))} + \Psi E \Phi)^{-1}
        \Psi P_{M} T J_{M} \Phi.
    \end{equation*}
    Hence
    \begin{equation*}
        I_{\ell_p(C(\alpha))}
        =
        \bigl((I_{\ell_p(C(\alpha))} + \Psi E \Phi)^{-1}\Psi P_{M}\bigr)\,
        T\,
        \bigl(J_{M}\Phi\bigr),
    \end{equation*}
    so $T$ factors the identity. Moreover, the factorisation constant is bounded by
    \begin{equation*}
        \norm{(I_{\ell_p(C(\alpha))} + \Psi E \Phi)^{-1}\Psi P_{M}}\,
        \norm{J_{M}\Phi}
        \leq
        2 \norm{\Psi}\norm{\Phi}
        \leq
        4C.
    \end{equation*}
    The case where $\Psi(I_{\ell_p(C(\alpha))} - U)\Phi = I_{\ell_p(C(\alpha))}$ is identical. Indeed, in that case, one applies the same perturbation argument to
    \begin{equation*}
        I_{\ell_p(C(\alpha))} - P_{M}TJ_{M}
        =
        P_{M}(I_{\ell_p(C(\alpha))}-T)J_{M},
    \end{equation*}
    and concludes that $I_{\ell_p(C(\alpha))} - T$ factors the identity with the same bound.

    In the case $p = \infty$, one first applies \Cref{prop: lower-triangular} to obtain an infinite subset $M = \{m_j : j \in \N\} \subseteq \N$, closed sets $(A_{m_j})_{j \in \N}$ each homeomorphic to $[0,\alpha]$, and a lower triangular operator
    \begin{equation*}
        L \colon \ell_\infty(M,C(\alpha)) \to \left(\bigoplus_{j \in \N} C(A_{m_j})\right)_{\ell_\infty}
    \end{equation*}
    such that
    \begin{equation*}
        \norm{R P_M T J_M - L} < \varepsilon.
    \end{equation*}
    Since each $A_{m_j}$ is homeomorphic to $[0,\alpha]$, after making the canonical identifications we may regard $L$ as a lower triangular operator on $\ell_\infty(C(\alpha))$. By \Cref{prop: lower-triangular-dichotomy}, either $L$ or $I_{\ell_\infty(C(\alpha))} - L$ factors the identity on $\ell_\infty(C(\alpha))$ with constant $2C$. The same perturbation argument as above then shows that either $T$ or $I_{\ell_\infty(C(\alpha))} - T$ factors the identity, with the same type of estimate on the factorisation constant.

    Finally, the deduction of primariness from the UPFP is standard, via Pe\l czy\'nski's decomposition method and the $\ell_p$-stability of the spaces involved.
\end{proof}

\begin{remark}
    The key ingredients in the proofs are \Cref{lmm: preliminary-upper-triangular,lmm: lower-triangular}. The former remains valid for $E$-sums whenever $E$ has a $1$-unconditional basis and contains no copy of $c_0$, while, by \Cref{rem: lower-triangular-form}, the latter remains valid for $E$-sums of $C(\alpha)$ whenever $E$ has a $1$-unconditional basis and does not contain the spaces $\ell_1^n$ uniformly as block subspaces. It follows that the UPFP also holds for $E$-sums of $C(\alpha)$ spaces whenever $E$ has a (sub)symmetric basis and satisfies these additional assumptions.
\end{remark}

\begin{proof}[Proof of \Cref{th: main-theorem2}]
    Suppose first that $1<p<\infty$. Since $c_0(C[0,1])$ is isomorphic to $C[0,1]$, it is enough to prove that $\ell_p(c_0(C[0,1]))$ has the UPFP. By \Cref{prop: C01-UPFP}, the space $C[0,1]$ has the UPFP. In particular, it has the $C$-PFP for some constant $C\geq 1$.

    Let $T\colon \ell_p(c_0(C[0,1]))\to \ell_p(c_0(C[0,1]))$ be an operator, and choose $\varepsilon>0$ such that $2C\varepsilon<1/2$. By \Cref{lmm:lp-c0-upper-reduction}, there exist increasing sequences $\mathbf n=(n_j)_{j\in\N}$ and $\mathbf m=(m_j)_{j\in\N}$ in $\N$, and an upper triangular operator $U\colon \ell_p(C[0,1])\to\ell_p(C[0,1])$, such that
    \begin{equation*}
        \norm{R_{\mathbf n,\mathbf m}TJ_{\mathbf n,\mathbf m}-U}<\varepsilon.
    \end{equation*}
    Set
    \begin{equation*}
        E=R_{\mathbf n,\mathbf m}TJ_{\mathbf n,\mathbf m}-U.
    \end{equation*}
    By \Cref{prop: upper-triangular-dichotomy}, either $U$ or $I_{\ell_p(C[0,1])}-U$ factors the identity on $\ell_p(C[0,1])$ with constant $2C$. Suppose first that $U$ factors the identity. Then there exist operators $\Psi,\Phi\colon \ell_p(C[0,1])\to\ell_p(C[0,1])$ such that
    \begin{equation*}
        \Psi U\Phi=I_{\ell_p(C[0,1])} \qquad \text{and} \qquad \norm{\Psi}\norm{\Phi}\leq 2C.
    \end{equation*}
    Hence
    \begin{equation*}
        \Psi R_{\mathbf n,\mathbf m}TJ_{\mathbf n,\mathbf m}\Phi=I_{\ell_p(C[0,1])}+\Psi E\Phi.
    \end{equation*}
    Since
    \begin{equation*}
        \norm{\Psi E\Phi}\leq \norm{\Psi}\norm{E}\norm{\Phi}<2C\varepsilon<1/2,
    \end{equation*}
    the operator $I_{\ell_p(C[0,1])}+\Psi E\Phi$ is invertible, with inverse of norm at most $2$. Therefore
    \begin{equation*}
        I_{\ell_p(C[0,1])}=(I_{\ell_p(C[0,1])}+\Psi E\Phi)^{-1}\Psi R_{\mathbf n,\mathbf m}T J_{\mathbf n,\mathbf m}\Phi.
    \end{equation*}
    Hence $T$ factors the identity on $\ell_p(C[0,1])$ with factorisation constant at most $4C$.

    The case where $I_{\ell_p(C[0,1])}-U$ factors the identity is identical. Indeed, since
    \begin{equation*}
        I_{\ell_p(C[0,1])}-R_{\mathbf n,\mathbf m}TJ_{\mathbf n,\mathbf m}=R_{\mathbf n,\mathbf m}(I_{\ell_p(c_0(C[0,1]))}-T)J_{\mathbf n,\mathbf m}
    \end{equation*}
    and
    \begin{equation*}
        \norm{(I_{\ell_p(C[0,1])}-R_{\mathbf n,\mathbf m}TJ_{\mathbf n,\mathbf m})-(I_{\ell_p(C[0,1])}-U)}=\norm{E}<\varepsilon,
    \end{equation*}
    the same perturbation argument shows that $I_{\ell_p(c_0(C[0,1]))}-T$ factors the identity on $\ell_p(C[0,1])$ with factorisation constant at most $4C$. Since $c_0(C[0,1])$ is isomorphic to $C[0,1]$, this shows that either $T$ or $I_{\ell_p(c_0(C[0,1]))}-T$ factors the identity on $\ell_p(c_0(C[0,1]))$ with uniform control over the constant, which proves the result.

    The case $p=\infty$ is identical to the proof of \Cref{th: main-theorem}. Briefly, given an operator $T\colon \ell_\infty(C[0,1])\to\ell_\infty(C[0,1])$, one applies \Cref{prop: lower-triangular} to pass, after restricting to suitable closed copies of $[0,1]$, to a lower triangular operator. Then \Cref{prop: lower-triangular-dichotomy} shows that either this lower triangular operator or its complement factors the identity with uniform constant. The same perturbation argument as above then implies that either $T$ or $I_{\ell_\infty(C[0,1])}-T$ factors the identity, again with a uniform constant. Hence $\ell_\infty(C[0,1])$ has the UPFP.

    Primariness follows again from the UPFP and Pe\l czy\'nski's decomposition method, by using the $\ell_p$-stability of the spaces involved.
\end{proof}

\begin{remark}
    The key ingredients in the proofs for the $C[0,1]$ case are \Cref{lmm:c0-lp-coordinate-smallness,lmm: lower-triangular}. By \Cref{rem: upper-triangular-form}, the former remains valid for $E$-sums, that is, for operators $T\colon c_0(X)\to E(Y)$, whenever $E$ contains no copy of $c_0$. On the other hand, by \Cref{rem: lower-triangular-form}, the latter remains valid for $E$-sums of $C[0,1]$ whenever $E$ contains no copy of $\ell_1$. By a classical theorem of James \cite{James1950}, a Banach space with an unconditional basis is reflexive if and only if it contains neither $c_0$ nor $\ell_1$. In particular, the UPFP also holds for $E$-sums of $C[0,1]$ whenever $E$ is reflexive and has a subsymmetric basis.
\end{remark}

We now give the proof for the case $p=1$.

\begin{proof}[Proof of \Cref{th: main-theorem3}]
    Suppose first that $X\simeq c_0(X)$. Since $\ell_1(c_0(X))$ is isomorphic to $\ell_1(X)$, it is enough to prove that $\ell_1(c_0(X))$ has the UPFP. Let $C\geq 1$ be such that $X$ has the $C$-PFP, and choose $\varepsilon>0$ such that $C\varepsilon<1/2$.

    We first note that every operator $S\colon X\to \ell_1$ is compact. Indeed, if some such $S$ were not compact, then $S$ would not be weakly compact, since $\ell_1$ has the Schur property. Thus, by a result of Pe\l czy\'nski \cite{pelczynski1965strictly2}, $S$ would fix a copy of $\ell_1$, which yields a complemented copy of $\ell_1$ in $X$, contradicting the assumption.

    Let $T\colon \ell_1(c_0(X))\to \ell_1(c_0(X))$ be an operator. By \Cref{prop:lower-triangular-form-ell1}, there exist increasing sequences $\mathbf n=(n_j)_{j\in\N}$ and $\mathbf m=(m_j)_{j\in\N}$ in $\N$, and a lower triangular operator $L\colon \ell_1(X)\to\ell_1(X)$, such that
    \begin{equation*}
        \norm{R_{\mathbf n,\mathbf m}TJ_{\mathbf n,\mathbf m}-L}<\varepsilon/2.
    \end{equation*}
    Since every operator $X\to \ell_1$ is compact, \Cref{prop: prop-upper-triangular}, applied to $L\colon \ell_1(X)\to\ell_1(X)$, gives an infinite subset $M\subseteq\N$ and an upper triangular operator $D\colon \ell_1(M,X)\to\ell_1(M,X)$ such that
    \begin{equation*}
        \norm{P_MLJ_M-D}<\varepsilon/2.
    \end{equation*}
    Since $L$ is lower triangular, $D$ is diagonal. Moreover,
    \begin{equation*}
        \norm{P_MR_{\mathbf n,\mathbf m}TJ_{\mathbf n,\mathbf m}J_M-D}\leq \norm{P_M(R_{\mathbf n,\mathbf m}TJ_{\mathbf n,\mathbf m}-L)J_M}+\norm{P_MLJ_M-D}<\varepsilon.
    \end{equation*}

    Write $D=\diag(D_j:j\in M)$. Since $X$ has the $C$-PFP, for each $j\in M$ either $D_j$ or $I_X-D_j$ factors $I_X$ with constant at most $C$. Passing to a further infinite subset of $M$, and relabelling, we may suppose that the same alternative holds for every $j\in M$.

    Suppose first that each $D_j$ factors $I_X$. Then there are operators $\Phi_j,\Psi_j\colon X\to X$ such that
    \begin{equation*}
        \Phi_jD_j\Psi_j=I_X \qquad \text{and} \qquad \norm{\Phi_j}\norm{\Psi_j}\leq C.
    \end{equation*}
    Rescaling if necessary, we may assume that $\norm{\Phi_j}\leq\sqrt C$ and $\norm{\Psi_j}\leq\sqrt C$ for every $j\in M$. Thus the diagonal operators $\Phi=\diag(\Phi_j:j\in M)$ and $\Psi=\diag(\Psi_j:j\in M)$ are bounded on $\ell_1(M,X)$ and satisfy
    \begin{equation*}
        \Phi D\Psi=I_{\ell_1(M,X)} \qquad \text{and} \qquad \norm{\Phi}\norm{\Psi}\leq C.
    \end{equation*}
    Set
    \begin{equation*}
        E=P_MR_{\mathbf n,\mathbf m}TJ_{\mathbf n,\mathbf m}J_M-D.
    \end{equation*}
    Then $\norm{E}<\varepsilon$, and hence $\norm{\Phi E\Psi}<1/2$. Therefore
    \begin{equation*}
        I_{\ell_1(M,X)}=(I_{\ell_1(M,X)}+\Phi E\Psi)^{-1}\Phi P_MR_{\mathbf n,\mathbf m}TJ_{\mathbf n,\mathbf m}J_M\Psi.
    \end{equation*}
    Thus $T$ factors the identity on $\ell_1(M,X)$, with constant bounded independently of $T$.

    The case where each $I_X-D_j$ factors $I_X$ is identical. Indeed, in that case we obtain diagonal operators $\Phi$ and $\Psi$ such that
    \begin{equation*}
        \Phi(I_{\ell_1(M,X)}-D)\Psi=I_{\ell_1(M,X)} \qquad \text{and} \qquad \norm{\Phi}\norm{\Psi}\leq C.
    \end{equation*}
    Since
    \begin{equation*}
        I_{\ell_1(M,X)}-P_MR_{\mathbf n,\mathbf m}TJ_{\mathbf n,\mathbf m}J_M=P_MR_{\mathbf n,\mathbf m}(I_{\ell_1(c_0(X))}-T)J_{\mathbf n,\mathbf m}J_M,
    \end{equation*}
    the same perturbation argument shows that $I_{\ell_1(c_0(X))}-T$ factors the identity on $\ell_1(M,X)$, again with a constant bounded independently of $T$.

    Identifying $\ell_1(M,X)$ with $\ell_1(X)$, and using the fixed isomorphism $\ell_1(c_0(X))\simeq \ell_1(X)$, we conclude that either $T$ or $I_{\ell_1(c_0(X))}-T$ factors the identity on $\ell_1(c_0(X))$ with a uniform constant. Hence $\ell_1(c_0(X))$ has the UPFP, and therefore so does $\ell_1(X)$.

    If $X\simeq \ell_p(X)$ for some $1<p<\infty$, the proof is the same, replacing $c_0(X)$ by $\ell_p(X)$ throughout and using the $\ell_p$-version of \Cref{prop:lower-triangular-form-ell1}. Thus $\ell_1(\ell_p(X))$ has the UPFP, and since $\ell_1(\ell_p(X))\simeq\ell_1(X)$, the space $\ell_1(X)$ has the UPFP.

    Finally, primariness follows from the UPFP and Pe\l czy\'nski's decomposition method.
\end{proof}

\begin{remark}\label{rem: extra-terms}
    The reader may notice that similar results remain valid under more general assumptions. For example, one can also obtain the UPFP, and thus primariness, for the spaces
    \begin{equation*}
        \Bigl( \bigoplus_{n \in \N} C(\omega^{\omega^{\alpha_n}}) \Bigr)_p,
        \qquad 1 \leq p \leq \infty,
    \end{equation*}
    whenever $(\alpha_n)_{n=1}^\infty$ is an increasing sequence of countable ordinals. Further variations of this kind are also possible and follow essentially from the same ideas.
\end{remark}

\begin{remark}
    Naturally, the previous theorems can be extended to finite iterated sums of the form $\ell_{p_1}(\ell_{p_2}(\ldots(\ell_{p_n}(C(K)))\ldots))$. This can be done by combining the techniques of \cite[Section~3]{AcuavivaKania2026} with the ideas developed above. Since we do not currently see any applications of this extension, we do not include the details. Similar considerations will apply to iterated sums of the form in \Cref{rem: extra-terms}.
\end{remark}